\documentclass[11pt, a4paper]{article}
\usepackage{amsfonts,amsmath,fullpage,amssymb,enumitem,stmaryrd,subfig,fancyhdr,amsthm,hyperref,tikz-cd}

\theoremstyle{definition}
\newtheorem*{mydef}{Definition}

\newcounter{master} \numberwithin{master}{section}
\newtheorem{mythm}[master]{Theorem}
\newtheorem{myprop}[master]{Proposition}
\newtheorem{mycol}[master]{Corollary}
\newtheorem{mylem}[master]{Lemma}

\newcommand{\Aut}{\text{Aut}}
\newcommand{\End}{\text{End}}
\newcommand{\Hom}{\text{Hom}}

\newcommand{\im}{\text{im\;}}
\newcommand{\dlim}{\lim\limits_\to}

\newcommand{\Gl}{\mathfrak{gl}}
\newcommand{\Sl}{\mathfrak{sl}}
\newcommand{\Glm}{\Gl^M}

\newcommand{\soc}{\text{soc}\;}
\newcommand{\Mat}{\text{Mat}}
\newcommand{\Ann}{\text{Ann}}

\newcommand{\hooklongrightarrow}{\lhook\joinrel\longrightarrow}

\newcommand{\Tr}{\text{tr}}
\newcommand{\Id}{\text{Id}}

\begin{document}
\setlength{\baselineskip}{17pt}
\setlist[enumerate]{topsep=0pt,itemsep=-1ex,partopsep=1ex,parsep=1ex}

\begin{titlepage}
\begin{center}
\textsc{\Large Jacobs University Bremen}\\[0.5cm]
\textsc{\Large Thesis for Bachelor of Science in Mathematics}\\[0.5 cm]
\textsc{\Large by Mengyuan Zhang}\\[7 cm]

\textsc{\huge Lie algebras of linear systems\\[0.5 cm] and \\[0.5 cm] their automorphisms}\\[7cm]
\textbf{\large Advisor}\\
\textsc{\large Prof. Dr. Ivan Penkov}\\
\end{center}
\end{titlepage}

\begin{abstract}
\setlength{\baselineskip}{17pt}
The objective of this thesis is to study the automorphism groups of the Lie algebras attached to linear systems. A linear system is a pair of vector spaces $(U,W)$ with a nondegenerate pairing $\langle\cdot,\cdot\rangle\colon U\otimes W\to \mathbb{C}$, to which we attach three Lie algebras $\Sl_{U,W}\subset \Gl_{U,W}\subset\Glm_{U,W}$. If both $U$ and $W$ are countable dimensional, then, up to isomorphism, there is a unique linear system $(V,V_*)$. In this case $\Sl_{V,V_*}$ and $\Gl_{V,V_*}$ are the well-known Lie algebras $\Sl_\infty$ and $\Gl_\infty$, while the Lie algebra $\Glm_{V,V_*}$ is the Mackey Lie algebra introduced in \cite{PSer}.

 We review results about the monoidal categories $\mathbb{T}_{\Sl_{U,W}}$ and $\mathbb{T}_{\Glm_{U,W}}$ of tensor modules, both of which turn out to be equivalent as monoidal categories to the category $\mathbb{T}_{\Sl_\infty}$ introduced earlier in \cite{DPS}. Using the relations between the categories $\mathbb{T}_{\Sl_\infty}$ and $\mathbb{T}_{\Glm_\infty}$, we compute the automorphism group of $\Glm_\infty$. \\[8cm]

\tableofcontents
\end{abstract}

%%%%%%%%%%%%%%%%%%%%%%%%%%%%%%%%%%%%%%%%%%%%%%%%%%%%%%%%%%%%%%

\newpage
\section{Preliminaries}
The ground field is $\mathbb{C}$. $(\;\cdot\;)^*$ denotes the contravariant functor which maps a vector space $M$ to $\Hom(M,\mathbb{C})$ and maps a linear map $f\colon M\to N$ to its dual $f^*\colon  N^*\to M^*$. In this paper, countable dimensional means infinite countable dimensional.  $T(X)$ denotes the tensor algebra of a vector space $X$. $\mathfrak{g}$ denotes an arbitrary Lie algebra, and $\mathfrak{g}$-mod denotes the category of $\mathfrak{g}$-modules, where morphisms are $\mathfrak{g}$-homomorphisms.

\bigskip
Throughout this paper, let $V$ be a fixed vector space with a fixed countable basis $\mathcal{B} = \{e_i\}_{i\in\mathbb{N}}$ and let $e^i$  be the linear functional dual to $e_i$, i.e. $e^i(e_j) = \delta_{ij}\;\;\forall i,j$. Let $\mathcal{B}^* := \{e^i\}_{i\in\mathbb{N}}$ and  $V_* := $ span $\mathcal{B}^*$. We also set $\mathcal{B}_n := \{e_i\}_{i =1}^n$ and $\mathcal{B}_n^* := \{e^i\}_{i =1}^n$. Then clearly $V = \dlim V_n$ and $V_* = \dlim V_n^*$, where $V_n := $ span $\mathcal{B}_n$ and $V_n^* := $ span $\mathcal{B}_n^*$. 

$\Mat_n$ denotes the Lie algebra of $n\times n$ matrices. For $n\le m$, consider the embedding
$\Mat_n\stackrel{\iota_{nm}}{\hooklongrightarrow} \Mat_m$
by upper left corner identity inclusion and filling zeros elsewhere. Then $(\{\Mat_n\}, \{\iota_{nm}\})$ forms a direct system of Lie algebras, and $\Mat_\mathbb{N} := \dlim \Mat_n$ is its direct limit. $\Mat_\mathbb{N}$ is nothing but the Lie algebra of matrices $(A_{ij})_{i,j\in\mathbb{N}}$ that have finitely many nonzero entries.

We define the Lie algebra $\Gl_n:= V_n\otimes V_n^*$, with Lie bracket given by
\begin{equation}\label{one}
[e_i\otimes e^j,e_k\otimes e^l] = \delta_{jk} e_i\otimes e^l-\delta_{il} e_k\otimes e^j\quad \forall e_i,e_k\in \mathcal{B}_n, e^j,e^l\in\mathcal{B}_n^*.
\end{equation}

Observe that $\Gl_n$ and $\Mat_n$ are isomorphic as Lie algebras. An isomorphism is given by sending $e_i\otimes e^j$ to the matrix $E_{ij}$ having 1 at $(i,j)$ and 0 elsewhere. For any $n\le m$, the embedding $\iota_{nm}$ induces an imbedding $\Gl_n\stackrel{\iota'_{nm}}{\hooklongrightarrow} \Gl_m$ such that the following diagram commutes
\[\begin{tikzcd}
  \Gl_n \arrow[hookrightarrow]{r}{\iota'_{nm}}\arrow[leftrightarrow]{d}
  & \Gl_m \arrow[leftrightarrow]{d}\\
  \Mat_n\arrow[hookrightarrow]{r}{\iota_{nm}}
  & \Mat_m, 
\end{tikzcd}\]
making $(\{\Gl_n\}, \{\iota'_{nm}\})$ a direct system with direct limit
\[\Gl_\infty := V\otimes V_* = \dlim (V_n\otimes V_n^*) = \dlim \Gl_n.\]
The Lie bracket on $\Gl_\infty$ is given by (\ref{one}), where now $e_i,e_k\in\mathcal{B},e^j,e^l\in \mathcal{B}^*$. 

Clearly, since the two direct systems $(\{\Mat_n\}, \{\iota_{nm}\})$ and $(\{\Gl_n\}, \{\iota'_{nm}\})$ are isomorphic, their direct limits $\Mat_\mathbb{N}$ and $\Gl_\infty$ are isomorphic. 

The trace homomorphism $\Tr_n\colon \Gl_n\to\mathbb{C}$ can be extended to a homomorphism $\Tr\colon\Gl_\infty\to\mathbb{C}$. Define $\Sl_\infty:= \ker \Tr$. Then $\Sl_\infty = \dlim \ker \Tr_n$. Similarly, $\Sl_n\cong \Mat_n^0$ and $\Sl_\infty\cong \Mat_\mathbb{N}^0$, where $\Mat_n^0$ denotes the Lie subalgebra of $\Mat_n$ consisting of traceless $n\times n$ matrices and $\Mat_\mathbb{N}^0$ denotes the Lie subalgebra of $\Mat_\mathbb{N}$ consisting of traceless finitary matrices.

%%%%%%%%%%%%%%%%%%%%%%%%%%%%%%%%%%%%%%%%%%%%%%%%%%%%%%%%%%%%%%%%

\section{The Lie algebras $\Sl_{U,W}$ and $\Gl_{U,W}$}
In this section we generalize the construction of $\Sl_\infty$ and $\Gl_\infty$ to arbitrary linear systems $(U,W)$. A pair of vector spaces of arbitrary dimensions $(U,W)$ is called a \emph{linear system} if they are equipped with a nondegenerate bilinear form $\langle\cdot,\cdot\rangle\colon  U\times W\to \mathbb{C}$. We say that $(U',W')$ is a \emph{subsystem} if $U',W'$ are subspaces of $U,W$ respectively and $\langle\cdot,\cdot\rangle$ is nondegenerate when restricted to $U'\times W'$. 
\begin{myprop}
Let $(U,W)$ be a linear system and let $U_f\subset U$ be a finite-dimensional subspace of $U$. Let $W_f$ be a direct complement to $U_f^\perp$ in $W$, i.e. $W = U_f^\perp\oplus W_f$. Then $(U_f,W_f)$ is a subsystem and $\dim U_f = \dim W_f$. 
\end{myprop}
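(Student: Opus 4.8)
The plan is to encode the restricted pairing in a single linear map and reduce both claims to statements about that map. Define
$\psi\colon W\to U_f^*$ by $\psi(w) = \langle\,\cdot\,,w\rangle|_{U_f}$, the functional $u\mapsto\langle u,w\rangle$ on $U_f$. By the very definition of the perpendicular space, $\ker\psi = U_f^\perp$, so $\psi$ factors through an injection $W/U_f^\perp\hookrightarrow U_f^*$.

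The key step is to show that $\psi$ is surjective; this is the only place where nondegeneracy of $\langle\cdot,\cdot\rangle$ together with $\dim U_f<\infty$ is genuinely used. Since $U_f$ is finite-dimensional, the canonical map $U_f\to U_f^{**}$ is an isomorphism, so $\im\psi = U_f^*$ precisely when the annihilator of $\im\psi$ inside $U_f^{**}\cong U_f$ is trivial. An element $u\in U_f$ lies in this annihilator exactly when $\psi(w)(u) = \langle u,w\rangle = 0$ for every $w\in W$, and nondegeneracy of the full pairing forces $u=0$. Hence $\psi$ is onto and induces an isomorphism $W/U_f^\perp\xrightarrow{\sim} U_f^*$.

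With surjectivity in hand the two assertions follow quickly from the hypothesis $W = U_f^\perp\oplus W_f$. Because $W_f$ is a complement to $U_f^\perp = \ker\psi$, the restriction $\psi|_{W_f}\colon W_f\to U_f^*$ is an isomorphism; comparing dimensions gives $\dim W_f = \dim U_f^* = \dim U_f$, which is the second claim (and is finite regardless of the dimensions of $W$ and $U_f^\perp$). For the first claim I would read nondegeneracy of the restricted form off the two halves of this isomorphism: injectivity of $\psi|_{W_f}$ says $W_f\cap U_f^\perp = 0$, so no nonzero $w\in W_f$ pairs trivially with all of $U_f$, while surjectivity of $\psi|_{W_f}$ says every functional on $U_f$ is realized by pairing against some $w\in W_f$, so the only $u\in U_f$ pairing trivially with all of $W_f$ is $u=0$.

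The main obstacle is exactly the surjectivity of $\psi$: this is where finite-dimensionality of $U_f$ cannot be dropped, since the identification $U_f\cong U_f^{**}$ and the resulting annihilator argument fail in infinite dimensions. Everything else is routine bookkeeping with the direct-sum decomposition of $W$.
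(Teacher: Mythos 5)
Your proof is correct, but it is organized differently from the paper's. The paper proves nondegeneracy of the restricted form first, by two elementary observations that use only the splitting $W = U_f^\perp\oplus W_f$: if $u\in U_f$ pairs trivially with $W_f$, then it also pairs trivially with $U_f^\perp$ (by definition of $U_f^\perp$), hence with all of $W$, so $u=0$; and if $w\in W_f$ pairs trivially with $U_f$, then $w\in U_f^\perp\cap W_f = 0$. Only afterwards does the paper invoke finite-dimensionality, using the two injections $W_f\hookrightarrow U_f^*$ and $U_f\hookrightarrow W_f^*$ coming from the now-established nondegeneracy to squeeze $\dim W_f\le\dim U_f$ and $\dim U_f\le\dim W_f$. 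You invert this logic: your key lemma is the surjectivity of $\psi\colon W\to U_f^*$, proved by the double-dual annihilator argument, which yields the canonical isomorphism $W/U_f^\perp\cong U_f^*$; both claims then fall out of the fact that the complement $W_f$ maps isomorphically onto $U_f^*$. Your route buys a slightly stronger structural statement (any complement of $U_f^\perp$ is canonically identified with $U_f^*$, and the dimension equality comes from a single isomorphism rather than two opposing injections), and it isolates cleanly where finite-dimensionality is indispensable. The paper's route is more elementary in that the nondegeneracy half needs no duality theory at all, and finite-dimensionality enters only at the final counting step.
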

\begin{proof}
Suppose $v\in V_f$ is such that $\langle v,W_f\rangle = 0$. Then $\langle v, W\rangle = 0$, and we must have $v = 0$. Suppose $w\in W_f$ is such that $\langle V_f, w\rangle = 0$. Then $w\in V_f^\perp\cap W_f$, thus $w = 0$. We conclude that $\langle \cdot, \cdot\rangle$ is nondegenerate on $V_f\times W_f$. Therefore the nondegenerate form $\langle\cdot,\cdot\rangle$ induces an injection $W_f\hookrightarrow (V_f)^*$. This forces $W_f$ to be finite-dimensional. Similarly, there is an injection $V_f\hookrightarrow (W_f)^*$. Therefore $\dim V_f = \dim W_f$.
\end{proof}

  Given any linear system $(U,W)$, we define two Lie algebras $\Gl_{U,W}$ and $\Sl_{U,W}$. Let $\Gl_{U,W}$ equal the vector space $U\otimes W$, with Lie bracket
\[
[u_1\otimes w_1,u_2\otimes w_2] = \langle u_2,w_1\rangle u_1\otimes w_2 - \langle u_1, w_2\rangle u_2\otimes w_1 
\quad \forall u_1,u_2\in U, w_1,w_2\in W.
\]
We define $\Sl_{U,W}$ to be the kernel of $\langle\cdot,\cdot\rangle$. Note that the pair $(V,V_*)$ is a linear system, and $\Sl_{V,V_*} = \Sl_\infty$ and $\Gl_{V,V_*} = \Gl_\infty$. The next four propositions generalize some simple observations concerning $\Sl_\infty$ and $\Gl_\infty$ to arbitrary $\Sl_{U,W}$ and $\Gl_{U,W}$.

\begin{myprop}
There are isomorphisms of Lie algebras $\Sl_{U,W}\cong \Sl_{W,U}$ and $\Gl_{U,W}\cong \Gl_{W,U}$.
\end{myprop}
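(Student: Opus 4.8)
The plan is to write down the obvious candidate map and then correct it by a sign. First I observe that if $(U,W)$ is a linear system with form $\langle\cdot,\cdot\rangle$, then $(W,U)$ is a linear system with the transposed form $\langle w,u\rangle' := \langle u,w\rangle$, which is again nondegenerate; this is the data defining $\Gl_{W,U}$ and $\Sl_{W,U}$. The natural guess for the isomorphism is the flip $\phi\colon U\otimes W\to W\otimes U$, $u\otimes w\mapsto w\otimes u$. Computing both sides of the bracket identity, I would find that $\phi$ is an \emph{anti}-homomorphism, i.e. $\phi([x,y]) = -[\phi(x),\phi(y)]$: the two terms of the bracket are interchanged with the opposite overall sign, exactly as for the transpose map $A\mapsto A^{T}$ on matrices.

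To fix this, I would set $\psi := -\phi$, so that $\psi(u\otimes w) = -w\otimes u$. Since the map $x\mapsto -x$ is an anti-automorphism of any Lie algebra (indeed $[-x,-y]=[x,y]$ forces it to reverse the bracket), the composite $\psi$ of two anti-homomorphisms is an honest Lie algebra homomorphism. Concretely, I would verify directly that
\[
\psi\bigl([u_1\otimes w_1,u_2\otimes w_2]\bigr) = \langle u_1,w_2\rangle\, w_1\otimes u_2 - \langle u_2,w_1\rangle\, w_2\otimes u_1 = [\psi(u_1\otimes w_1),\psi(u_2\otimes w_2)],
\]
where the right-hand bracket is computed in $\Gl_{W,U}$ using $\langle\cdot,\cdot\rangle'$. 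As $\psi$ is manifestly a linear bijection (both the flip and negation are invertible), this establishes $\Gl_{U,W}\cong\Gl_{W,U}$. The map is nothing but the infinite-dimensional analogue of the classical automorphism $A\mapsto -A^{T}$.

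For the $\Sl$ statement I would check that $\psi$ restricts to an isomorphism of the kernels. Recall $\Sl_{U,W}=\ker\langle\cdot,\cdot\rangle$, where $\langle\cdot,\cdot\rangle$ is read as the induced linear (trace) functional $U\otimes W\to\mathbb{C}$. Because $\langle w,u\rangle'=\langle u,w\rangle$, the flip preserves the value of this functional and negation only rescales it by $-1$; hence $\psi$ carries $\ker\langle\cdot,\cdot\rangle$ bijectively onto $\ker\langle\cdot,\cdot\rangle'$, giving $\Sl_{U,W}\cong\Sl_{W,U}$ by restriction.

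The only genuinely subtle point is the sign: the naive flip fails to be a homomorphism by precisely a minus sign, so one must compose with negation. Everything else is a routine bilinear computation, and I expect no real obstacle beyond carefully tracking the two terms and their signs in the bracket.
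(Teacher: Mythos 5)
Your proof is correct and uses exactly the same map as the paper, namely $u\otimes w\mapsto -w\otimes u$; the paper simply writes down this map and its explicit inverse $w\otimes u\mapsto -u\otimes w$, while you additionally motivate the sign by observing that the naive flip is an anti-homomorphism. The verification of the bracket identity and the restriction to the kernels $\Sl_{U,W}\to\Sl_{W,U}$ are both sound.
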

\begin{proof}
Consider the linear operators $f\colon  \Gl_{U,W}\to \Gl_{W,U}$  and $g\colon \Gl_{W,U}\to \Gl_{U,W}$ such that
\[f(u\otimes w) = -w\otimes u\quad\forall u\otimes w\in \Gl_{U,W},\]
 \[g(w\otimes u) = -u\otimes w\quad\forall u\otimes w\in \Gl_{W,U}.\]
 They are both Lie algebra homomorphisms and are mutually inverse. Also
 $f$ restricts to an isomorphism of Lie algebras $\Sl_{U,W}\cong \Sl_{W,U}$.
\end{proof}

\begin{myprop}
The Lie algebra $\Sl_{U,W}$ is simple.
\end{myprop}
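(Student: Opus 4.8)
The plan is to realize $\Sl_{U,W}$ as a directed union of copies of the finite-dimensional simple Lie algebras $\Sl_n$ and then run the standard argument that such a union is simple: a nonzero ideal meets a large enough finite-dimensional simple piece in a nonzero ideal, hence contains that whole piece, and letting the piece grow shows the ideal is everything. Throughout I assume $\Sl_{U,W}\neq 0$, which forces $\dim U=\dim W\geq 2$, so that the finite-dimensional pieces $\Sl_n$ occurring are genuinely simple.

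The technical heart, and the step I expect to be the only real obstacle, is a completion lemma: every pair of finite-dimensional subspaces $U_0\subseteq U$, $W_0\subseteq W$ is contained in a finite-dimensional subsystem $(U_f,W_f)$, i.e. $U_0\subseteq U_f$, $W_0\subseteq W_f$, and $\langle\cdot,\cdot\rangle$ is nondegenerate on $U_f\times W_f$. The naive approach of completing $U_0$ and $W_0$ to subsystems separately (via the first proposition of this section) and then taking sums fails, because the two one-sided complements need not be compatible and the resulting pairing can remain degenerate on one side. The maneuver that avoids this is to saturate $U_0$ first: picking a basis $w_1,\dots,w_q$ of $W_0$, nondegeneracy of the global pairing makes the functionals $\langle\cdot,w_j\rangle$ linearly independent on $U$, so there exist $c_1,\dots,c_q\in U$ with $\langle c_i,w_j\rangle=\delta_{ij}$; setting $U_f:=U_0+\mathrm{span}(c_1,\dots,c_q)$ forces $W_0\cap U_f^{\perp}=0$. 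Because $W_0$ now meets $U_f^{\perp}$ only in $0$, I can choose a direct complement $W_f$ to $U_f^{\perp}$ in $W$ with $W_f\supseteq W_0$, and the first proposition of this section guarantees that any such complement makes $(U_f,W_f)$ a subsystem with $\dim U_f=\dim W_f$.

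With the lemma in hand, two consequences are immediate. Every $X\in\Sl_{U,W}$, being a finite sum $\sum_k u_k\otimes w_k$, lies in $\Sl_{U_f,W_f}$ for the subsystem completing $(\mathrm{span}\,u_k,\ \mathrm{span}\,w_k)$; and any two finite-dimensional subsystems lie in a common one, obtained by completing the pair of sums of their respective spaces. Moreover each $\Sl_{U_f,W_f}$ is a subalgebra of $\Sl_{U,W}$ isomorphic to $\Sl_n$ with $n=\dim U_f$: choosing bases of $U_f$ and $W_f$ that are dual under the now perfect pairing and sending the corresponding $u_i\otimes w_j$ to $E_{ij}$ reproduces the bracket $(\ref{one})$ and preserves the trace-zero condition. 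Thus $\Sl_{U,W}=\bigcup_{(U_f,W_f)}\Sl_{U_f,W_f}$ is a directed union of simple Lie algebras.

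Finally I would run the ideal argument. Let $I$ be a nonzero ideal and fix $0\neq X\in I$, say $X\in\mathfrak{s}_\alpha:=\Sl_{U_\alpha,W_\alpha}$. For an arbitrary $Y\in\Sl_{U,W}$, choose a subsystem with $Y\in\mathfrak{s}_\gamma$ and, by directedness, a subsystem $\mathfrak{s}_\beta$ containing both $\mathfrak{s}_\alpha$ and $\mathfrak{s}_\gamma$. Since $I$ is an ideal and $\mathfrak{s}_\beta$ a subalgebra, $I\cap\mathfrak{s}_\beta$ is an ideal of $\mathfrak{s}_\beta$; it is nonzero because it contains $X$, so simplicity of $\mathfrak{s}_\beta\cong\Sl_n$ gives $I\cap\mathfrak{s}_\beta=\mathfrak{s}_\beta\ni Y$. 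Hence $Y\in I$, and as $Y$ was arbitrary, $I=\Sl_{U,W}$.
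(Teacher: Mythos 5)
Your proof is correct and takes essentially the same route as the paper's: both realize $\Sl_{U,W}$ as a directed union of finite-dimensional simple subalgebras $\Sl_{U_f,W_f}$ and then run the standard argument that a nonzero ideal absorbs every sufficiently large simple piece. The only difference is that you explicitly prove the completion and directedness facts (every pair of finite-dimensional subspaces lies in a finite-dimensional subsystem, and any two subsystems have a common upper bound), which the paper asserts without proof.
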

\begin{proof}
The set of finite-dimensional subsystems $(U_f,W_f)$ of $(U,W)$ is partially ordered by inclusion, and any two such subsystems have an upper bound. Thus we obtain a direct system of Lie algebras $\{\Sl_{U_f,W_f}\}$ with direct limit $\Sl_{U,W} = \dlim \Sl_{U_f,W_f}$. Any nontrivial ideal $I$ of $\Sl_{U,W}$ intersects nontrivially with some $\Sl_{U_f,W_f}$. We conclude that $I\supset  \Sl_{U_f,W_f}$ by the simplicity of  $\Sl_{U_f,W_f}$. Similarly, $I\supset \Sl_{U_f',W_f'}$ for any finite-dimensional subsystem $(U_f',W_f')$ containing $(U_f,W_f)$ by the simplicity of $\Sl_{U_f',W_f'}$. This means that $I = \Sl_{U,W}$.
\end{proof}

\begin{myprop}
The Lie algebra $\Sl_{U,W}$ is the commutator subalgebra of $\Gl_{U,W}$.
\end{myprop}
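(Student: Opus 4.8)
The statement to prove is $[\Gl_{U,W},\Gl_{U,W}] = \Sl_{U,W}$, and the plan is to establish the two inclusions separately. The inclusion $[\Gl_{U,W},\Gl_{U,W}] \subseteq \Sl_{U,W}$ is the easy half. Since $\Sl_{U,W}$ is by definition the kernel of $\langle\cdot,\cdot\rangle$, i.e. of the trace map $\Tr(u\otimes w)=\langle u,w\rangle$, it suffices to check that $\Tr$ annihilates every bracket. A direct application of the bracket formula gives $\Tr([u_1\otimes w_1,u_2\otimes w_2]) = \langle u_2,w_1\rangle\langle u_1,w_2\rangle - \langle u_1,w_2\rangle\langle u_2,w_1\rangle = 0$, and by bilinearity this extends to all brackets, so $[\Gl_{U,W},\Gl_{U,W}]\subseteq\ker\Tr=\Sl_{U,W}$.

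The real content is the reverse inclusion $\Sl_{U,W}\subseteq[\Gl_{U,W},\Gl_{U,W}]$. First I would fix, once and for all, a reference pair $a\in U$, $b\in W$ with $\langle a,b\rangle=1$; such a pair exists because the form is nondegenerate (we may assume $U,W\neq 0$, the degenerate case being trivial). The key computation I would record is the single identity
\[
[u\otimes b,\,a\otimes w] = \langle a,b\rangle\,u\otimes w - \langle u,w\rangle\,a\otimes b = u\otimes w - \langle u,w\rangle\,a\otimes b,
\]
valid for all $u\in U$, $w\in W$. This identity simultaneously reproduces $u\otimes w$ and produces a correction term that is a fixed scalar multiple of the single element $a\otimes b$.

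Now given an arbitrary $X=\sum_{i=1}^k u_i\otimes w_i\in\Sl_{U,W}$, the defining condition is $\sum_i\langle u_i,w_i\rangle = 0$. Summing the identity above over $i$ yields $\sum_i[u_i\otimes b,\,a\otimes w_i] = \sum_i u_i\otimes w_i - \bigl(\sum_i\langle u_i,w_i\rangle\bigr)\,a\otimes b = X$, where the correction terms cancel precisely because $X$ is traceless. Hence $X$ is a sum of $k$ brackets, which proves $\Sl_{U,W}\subseteq[\Gl_{U,W},\Gl_{U,W}]$ and, together with the first inclusion, completes the argument.

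The step I expect to require the right idea is the reverse inclusion, and the subtlety there is that the individual summands $u_i\otimes w_i$ of a traceless element need not themselves be traceless, so one cannot simply argue that each pure tensor with vanishing pairing is a bracket. The device of bracketing against the fixed reference pair $(a,b)$ and letting the global condition $\sum_i\langle u_i,w_i\rangle=0$ cancel the diagonal corrections is what circumvents this. As an alternative consistent with the direct-limit viewpoint used in the previous proposition, one could instead write $\Gl_{U,W}=\dlim\Gl_{U_f,W_f}$ and $\Sl_{U,W}=\dlim\Sl_{U_f,W_f}$ over finite-dimensional subsystems and reduce to the classical identity $[\Gl_n,\Gl_n]=\Sl_n$, using that the commutator of a directed union of subalgebras is the union of their commutators.
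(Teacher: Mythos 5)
Your proof is correct, but it takes a genuinely different route from the paper's. For the nontrivial inclusion $\Sl_{U,W}\subseteq[\Gl_{U,W},\Gl_{U,W}]$ the paper computes nothing: it invokes the simplicity of $\Sl_{U,W}$, established in the immediately preceding proposition, to conclude that $[\Sl_{U,W},\Sl_{U,W}]$ is a nonzero ideal of $\Sl_{U,W}$ and hence equals $\Sl_{U,W}$, whence $\Sl_{U,W}=[\Sl_{U,W},\Sl_{U,W}]\subseteq[\Gl_{U,W},\Gl_{U,W}]\subseteq\Sl_{U,W}$. Your argument is instead a self-contained computation: fixing $a\in U$, $b\in W$ with $\langle a,b\rangle=1$ and using the identity $[u\otimes b,\,a\otimes w]=u\otimes w-\langle u,w\rangle\,a\otimes b$ (which checks out against the paper's bracket formula), you let the global trace-zero condition cancel the diagonal corrections; your remark that the individual summands $u_i\otimes w_i$ need not themselves be traceless is exactly the right subtlety to flag. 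What your approach buys: it is elementary and constructive, independent of the simplicity proposition (which itself rests on simplicity of the finite-dimensional algebras $\Sl_{U_f,W_f}$), it exhibits each element of $\Sl_{U,W}$ explicitly as a sum of $k$ brackets of pure tensors, and it needs no caveat about $[\Sl_{U,W},\Sl_{U,W}]$ being nonzero (which silently requires $\Sl_{U,W}$ to be non-abelian). What the paper's approach buys: brevity given the machinery already in place, and a reuse of the direct-limit-of-simple-subalgebras technique that recurs throughout the paper. Your suggested alternative via $\Gl_{U,W}=\dlim\Gl_{U_f,W_f}$ and the classical identity $[\Gl_n,\Gl_n]=\Sl_n$ is also valid and is the route closest in spirit to the paper's.
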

\begin{proof}
It is clear from the definition of the Lie bracket that $[\Gl_{U,W},\Gl_{U,W}]\subset \Sl_{U,W}$. Conversely, observe that $[\Sl_{U,W},\Sl_{U,W}]$ is a nontrivial ideal of $\Sl_{U,W}$, therefore equal to $\Sl_{U,W}$ by the simplicity of $\Sl_{U,W}$. To conclude, 
\[\Sl_{U,W} = [\Sl_{U,W},\Sl_{U,W}] = [\Gl_{U,W},\Gl_{U,W}].\qedhere\]
\end{proof}

\begin{myprop}\text{ }
\begin{enumerate}
\item Let $A$ be any finite-dimensional Lie subalgebra of $\Gl_{U,W}$, then $A\subset \Gl_{U_f,W_f}$ for some finite-dimensional subsystem $(U_f,W_f)$.
\item Let $A$ be any finite-dimensional Lie subalgebra of $\Sl_{U,W}$, then $A\subset \Sl_{U_f,W_f}$ for some finite-dimensional subsystem $(U_f,W_f)$.
\end{enumerate}
\end{myprop}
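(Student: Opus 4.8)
The plan is to separate the Lie-theoretic packaging from the genuine content, which is a statement in linear algebra about the nondegenerate pairing. First I would prove (1) and then derive (2) as a short corollary. For (1), pick a vector-space basis $a_1,\dots,a_k$ of $A$ and write each $a_i$ as a finite sum of pure tensors $u\otimes w$ with $u\in U$, $w\in W$. Let $U_0\subseteq U$ be the span of all the $U$-factors occurring and $W_0\subseteq W$ the span of all the $W$-factors; both are finite dimensional and $A\subseteq U_0\otimes W_0$. Since $\Gl_{U_f,W_f}=U_f\otimes W_f$ by definition, the whole problem reduces to the following purely linear-algebraic claim: given finite-dimensional $U_0\subseteq U$ and $W_0\subseteq W$, there is a finite-dimensional subsystem $(U_f,W_f)$ with $U_0\subseteq U_f$ and $W_0\subseteq W_f$. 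Note that the Lie bracket plays no role here beyond guaranteeing that $A$ is a finite-dimensional vector space.

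The heart of the argument is this claim, and I would prove it by applying Proposition 2.1 twice, taking care to preserve both inclusions simultaneously. The naive attempt---apply Proposition 2.1 to $U_0$ to get a complement $W_f$ to $U_0^\perp$---produces a subsystem $(U_0,W_f)$, but there is no reason for the chosen complement $W_f$ to contain $W_0$, and that is exactly the main obstacle. The fix rests on the observation that Proposition 2.1 is valid for \emph{any} complement of $U_f^\perp$, so we are free to choose the complement to contain $W_0$ provided $W_0\cap U_f^\perp=0$. I would therefore first enlarge $U_0$: applying Proposition 2.1 to the linear system $(W,U)$ (legitimate by the symmetry recorded in Proposition 2.2) with the subspace $W_0$, obtain a complement $U_1$ to the annihilator $W_0^\perp=\{u\in U:\langle u,W_0\rangle=0\}$, so that $(U_1,W_0)$ is a subsystem and hence $W_0\cap U_1^\perp=0$. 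Set $U_f:=U_0+U_1$. Since $U_f^\perp\subseteq U_1^\perp$, we conclude $W_0\cap U_f^\perp=0$.

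With $U_f$ in hand and $W_0\cap U_f^\perp=0$, the sum $W_0\oplus U_f^\perp$ is direct, so I can choose a direct complement $W_f$ of $U_f^\perp$ in $W$ that contains $W_0$ (extend a basis of $W_0$ to a basis of a complement of $U_f^\perp$). By Proposition 2.1 the pair $(U_f,W_f)$ is then a finite-dimensional subsystem, and by construction $U_0\subseteq U_f$ and $W_0\subseteq W_f$; combined with $A\subseteq U_0\otimes W_0\subseteq U_f\otimes W_f=\Gl_{U_f,W_f}$ this proves (1). For (2), suppose in addition $A\subseteq\Sl_{U,W}$. Since $\Sl_{U,W}$ is the kernel of the contraction $\langle\cdot,\cdot\rangle\colon U\otimes W\to\mathbb{C}$, and this contraction restricts on $U_f\otimes W_f$ to the contraction defining $\Sl_{U_f,W_f}$, every element of $A$ lies in $U_f\otimes W_f$ and in the kernel of the contraction, hence in $\Sl_{U_f,W_f}$. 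Thus $A\subseteq\Sl_{U_f,W_f}$, and no new construction is needed for part (2).
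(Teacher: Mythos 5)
Your proof is correct, and its opening reduction---expand a basis of $A$ into pure tensors, set $U_0$ and $W_0$ to be the spans of the factors, observe $A\subseteq U_0\otimes W_0$---is exactly the paper's. The difference is what happens next: at that point the paper simply says ``let $(U_f,W_f)$ be a finite-dimensional subsystem such that $U_f\supset \mathrm{Span}\{u_{\alpha_{i,k}}\}$ and $W_f\supset \mathrm{Span}\{w_{\beta_{i,k}}\}$,'' i.e.\ it asserts without justification that any pair of finite-dimensional subspaces can be enlarged to a finite-dimensional subsystem. You actually prove that assertion, and your argument is sound: applying Proposition 2.1 in the flipped system $(W,U)$ gives $U_1$ with $\dim U_1=\dim W_0$ and $W_0\cap U_1^{\perp}=0$; enlarging to $U_f=U_0+U_1$ only shrinks the perp, so $W_0\cap U_f^{\perp}=0$ and the sum $W_0+U_f^{\perp}$ is direct; hence a complement $W_f$ of $U_f^{\perp}$ containing $W_0$ exists, and Proposition 2.1 then makes $(U_f,W_f)$ a finite-dimensional subsystem containing $(U_0,W_0)$. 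So your write-up is strictly more complete than the paper's: the paper's is shorter but leans on this unproven (if standard) enlargement fact, which it also uses silently elsewhere, e.g.\ for the directedness of the family of finite-dimensional subsystems in the proof that $\Sl_{U,W}$ is simple. Part (2) is handled identically in both proofs, by intersecting $\Gl_{U_f,W_f}$ with the kernel of the contraction $\langle\cdot,\cdot\rangle$.
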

\begin{proof}\text{ }
\begin{enumerate}
\item
Let $\{u_\alpha\}$ be a basis of $U$ and $\{w_\beta\}$ be a basis of $W$. Then $\{u_\alpha\otimes w_\beta\}$ is a basis of $U\otimes W$. Fix a basis $\{a_1,\cdots, a_n\}$ of $A$, and let
\[a_i = \sum_{k = 1}^{n_i} c_{i,k}\cdot u_{\alpha_{i,k}}\otimes w_{\beta_{i,k}}.\]
Let $(U_f,W_f)$ be a finite-dimensional subsystem such that $U_f \supset $ Span$\{u_{\alpha_{i,k}}\}$ and $W_f \supset $ Span$\{w_{\beta_{i,k}}\}$. Then $A\subset$ Span$\{u_{\alpha_{i,k}}\}\otimes$Span$\{w_{\beta_{i,k}}\}\subset \Gl_{U_f,W_f}$.

\item Given a finite-dimensional subalgebra $A\subset \Sl_{U,W} \subset \Gl_{U,W}$, we have $A\subset \Gl_{U_f,W_f}$ for some finite-dimensional subsystem $(U_f,W_f)$ by statement 1. Then $A\subset \Gl_{U_f,W_f}\cap \ker \langle\cdot,\cdot\rangle = \Sl_{U_f,W_f}$.\qedhere
\end{enumerate}
\end{proof}

\bigskip
  Having defined $\Sl_{U,W}$ and $\Gl_{U,W}$ for a linear system $(U,W)$, it is natural to ask when are $\Gl_{U_1,W_1}$ and $\Gl_{U_2,W_2}$ isomorphic as Lie algebras. A necessary and sufficient condition is given in \cite{PSer}.

\begin{mydef} Two linear systems $(U_1,W_1)$ and $(U_2,W_2)$ are isomorphic iff one of the following holds:
\begin{enumerate}
\item There are vector space isomorphisms $f\colon U_1\to U_2$ and $g\colon W_1\to W_2$ such that $\langle f(u),g(w) \rangle = \langle u,w \rangle$ for all $u\in U_1, w\in W_1$.
\item There are vector space isomorphisms $f\colon U_1\to W_2$ and $g\colon W_1\to U_2$ such that $\langle g(w),f(u)\rangle = \langle u,w \rangle$ for all $u\in U_1, w\in W_1$.
\end{enumerate}
We write $(U_1,W_1)\cong (U_2,W_2)$.
\end{mydef}

\begin{mythm}\cite[Prop~1.1]{PSer}\label{slls} 
The Lie algebras $\Sl_{(U_1,W_1)}$ and $\Sl_{(U_2,W_2)}$ are isomorphic iff the linear systems $(U_1,W_1)$ and $(U_2,W_2)$ are isomorphic.
\end{mythm}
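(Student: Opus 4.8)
The plan is to prove both implications, with the converse (isomorphic systems yield isomorphic Lie algebras) being routine and the forward direction (an abstract Lie isomorphism forces an isomorphism of systems) carrying all the weight.

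For the easy direction, suppose first that $(U_1,W_1)\cong(U_2,W_2)$ in the sense of case~1, via $f\colon U_1\to U_2$ and $g\colon W_1\to W_2$ with $\langle f(u),g(w)\rangle=\langle u,w\rangle$. I would check directly that $f\otimes g\colon u\otimes w\mapsto f(u)\otimes g(w)$ is a Lie algebra isomorphism $\Gl_{U_1,W_1}\to\Gl_{U_2,W_2}$: preservation of the bracket is immediate from the bracket formula once one substitutes $\langle f(u_i),g(w_j)\rangle=\langle u_i,w_j\rangle$, and since $f\otimes g$ preserves the value of the pairing it carries $\ker\langle\cdot,\cdot\rangle$ into $\ker\langle\cdot,\cdot\rangle$, hence restricts to $\Sl_{U_1,W_1}\cong\Sl_{U_2,W_2}$. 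Case~2 reduces to case~1: its hypotheses say precisely that $(U_1,W_1)\cong(W_2,U_2)$ in the sense of case~1, so I obtain $\Sl_{U_1,W_1}\cong\Sl_{W_2,U_2}$, and composing with the isomorphism $\Sl_{W_2,U_2}\cong\Sl_{U_2,W_2}$ established above finishes this direction.

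For the forward direction, fix a Lie algebra isomorphism $\phi\colon\Sl_{U_1,W_1}\to\Sl_{U_2,W_2}$; the idea is to reconstruct each linear system intrinsically from its Lie algebra and transport it through $\phi$. Call $x\neq0$ \emph{extremal} if $[x,[x,y]]\in\mathbb{C}x$ for all $y$; this is intrinsic, so $\phi$ maps extremal elements to extremal elements. I claim the extremal elements of $\Sl_{U,W}$ are exactly the pure tensors $u\otimes w$ with $\langle u,w\rangle=0$. Realizing $u\otimes w$ as the rank-one operator $u'\mapsto\langle u',w\rangle u$ on $U$, a short computation gives $[x,[x,y]]=-2\langle yu,w\rangle\,x$ whenever $\langle u,w\rangle=0$, so such $x$ are extremal; conversely, a single extremal $x$ lies in some finite-dimensional $\Sl_{U_f,W_f}\cong\mathfrak{sl}_n$ by part~(2) of the last Proposition above, and the classical fact that the extremal elements of $\mathfrak{sl}_n$ are the rank-one nilpotents (applied in a large enough finite piece) forces $x$ to have rank one, i.e.\ $x=u\otimes w$, with $\langle u,w\rangle=0$ because $x\in\Sl_{U,W}$.

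Next I would recover the two projective spaces. For fixed $u$ the set $L_u=\{u\otimes w:w\in W,\ \langle u,w\rangle=0\}$ and for fixed $w$ the set $M_w=\{u\otimes w:u\in U,\ \langle u,w\rangle=0\}$ are abelian and consist of extremal elements. Declaring $x\sim x'$ when $x+x'$ is again extremal (an intrinsic relation), one checks that $x\sim x'$ holds iff $x,x'$ share a $U$-line or a $W$-line, and that the maximal cliques for $\sim$ are exactly the $L_u$ and the $M_w$; thus $\phi$ permutes these maximal cliques. To see that $\phi$ respects the two families up to an overall swap, I form the incidence graph whose vertices are the maximal cliques and whose edges are the pairs meeting in a (necessarily unique) extremal point: since $L_u\cap L_{u'}=\varnothing=M_w\cap M_{w'}$ for distinct lines while $L_u\cap M_w\neq\varnothing\iff\langle u,w\rangle=0$, this graph is bipartite, and it is connected because any two hyperplanes $u_1^\perp,u_2^\perp\subset W$ meet nontrivially in the infinite-dimensional setting. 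A connected bipartite graph has a unique bipartition, so $\phi$ either preserves the labelling (leading to case~1) or exchanges it (case~2), and in either case it induces bijections between $\mathbb{P}(U_1),\mathbb{P}(W_1)$ and $\mathbb{P}(U_2),\mathbb{P}(W_2)$ (possibly swapped) preserving the orthogonality relation $\langle u,w\rangle=0$.

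Finally I would upgrade this projective, orthogonality-preserving data to honest linear maps $f,g$ with $\langle f(u),g(w)\rangle=\langle u,w\rangle$. Linearity can be extracted from the $\mathbb{C}$-linearity of $\phi$ together with the fundamental theorem of projective geometry (no field twist occurs over $\mathbb{C}$), fixing $f$ and $g$ up to scalars; matching the structure constants in $\phi([x,x'])=[\phi(x),\phi(x')]$ on pure tensors then recovers the actual \emph{values} of the pairing, not merely its zero set, and pins down the residual scalar freedom so that the pairing is preserved exactly. I expect the main obstacles to be the two points where the infinite-dimensional setting must be handled with care: proving the extremal-element characterization uniformly (reducing correctly to a sufficiently large finite piece) and executing the final linearization-and-normalization step. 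The low-rank case $\dim U_i=2$, where $\Sl_{U,W}\cong\mathfrak{sl}_2$ and the extremal geometry degenerates, would need a separate elementary treatment.
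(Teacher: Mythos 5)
The thesis itself contains no proof of this statement---it is imported verbatim from \cite[Prop.~1.1]{PSer}---so there is no internal argument to compare yours against; your proposal has to stand on its own, and its skeleton is sound, as well as genuinely different in flavor from everything the thesis does do (the thesis recovers structure from module categories and socles, whereas you reconstruct the linear system from the internal geometry of the Lie algebra). Your easy direction is exactly the restriction to $\Sl$ of the maps $f\otimes g$ and $-g\otimes f$ used in Corollary \ref{glls}, and the reduction of case~2 to case~1 through $\Sl_{W_2,U_2}\cong\Sl_{U_2,W_2}$ (Proposition 2.2) is correct. In the hard direction the key points check out: the computation $[x,[x,y]]=-2\langle y(u),w\rangle x$ for $x=u\otimes w$ with $\langle u,w\rangle=0$ is right; a single extremal element lies in some $\Sl_{U_f,W_f}$ by Proposition 2.5(2), and since the universally quantified extremality condition only weakens under restriction to a subalgebra, the classical fact that extremal elements of $\mathfrak{sl}_n$ are precisely the rank-one square-zero matrices applies; and the clique dichotomy plus the bipartite incidence graph work once both systems have dimension at least $3$, using that $X^{\perp\perp}=X$ for finite-dimensional $X$ under a nondegenerate pairing.

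Two steps need repair or substantially more care. First, the parenthetical ``no field twist occurs over $\mathbb{C}$'' is false as a statement about the fundamental theorem of projective geometry: $\mathbb{C}$ admits a vast supply of discontinuous field automorphisms, so FTPG by itself only yields semilinear maps. The twist must be excluded using the $\mathbb{C}$-linearity of $\phi$, and this can be made concrete: $M_w\cup\{0\}=({}^{\perp}w)\otimes w$ is a \emph{linear} subspace of $\Sl_{U_1,W_1}$, and $\phi$ maps it linearly onto the subspace underlying another maximal clique; hence your projective map is induced by an honest linear map on each such subspace, which forces the FTPG automorphism to be trivial---and in fact lets you bypass FTPG altogether by gluing these linear maps $h_w$ over varying $w$, which is where the real work of fixing the scalar ambiguities consistently lies. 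Second, the final normalization does not require chasing structure constants: the pairings $\langle u,w\rangle$ and $\langle f(u),g(w)\rangle$ on $U_1\times W_1$ are nondegenerate with identical zero sets, and two such pairings are proportional (for fixed $u$ the functionals $\langle u,\cdot\rangle$ and $\langle f(u),g(\cdot)\rangle$ have equal kernels, hence are proportional; connectedness of the non-orthogonality relation makes the factor constant), so one rescaling of $f$ finishes. The degenerate cases you defer are genuinely harmless: if either system is finite dimensional the statement reduces to $\mathfrak{sl}_n\cong\mathfrak{sl}_m$ iff $n=m$ together with the fact that all $n$-dimensional linear systems are isomorphic. With these repairs your plan is completable.
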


\begin{mycol}\label{glls}
The Lie algebras $\Gl_{(U_1,W_1)}$ and $\Gl_{(U_2,W_2)}$ are isomorphic iff the linear systems $(U_1,W_1)$ and $(U_2,W_2)$ are isomorphic.
\end{mycol}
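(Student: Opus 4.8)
The plan is to prove both implications, leaning on Theorem~\ref{slls} for the substantive direction, so that the corollary becomes a short bookkeeping argument. First I would establish that isomorphic linear systems yield isomorphic Lie algebras $\Gl$. Given an isomorphism of linear systems in the sense of case~1 of the definition, with $f\colon U_1\to U_2$ and $g\colon W_1\to W_2$ preserving the pairing, I would define the linear map $\Phi\colon\Gl_{(U_1,W_1)}\to\Gl_{(U_2,W_2)}$ by $u\otimes w\mapsto f(u)\otimes g(w)$ and verify directly that it respects the Lie bracket: expanding $[\Phi(u_1\otimes w_1),\Phi(u_2\otimes w_2)]$ and using $\langle f(u),g(w)\rangle = \langle u,w\rangle$ shows it equals $\Phi[u_1\otimes w_1,u_2\otimes w_2]$. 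Since $f$ and $g$ are bijective, $\Phi$ is a Lie algebra isomorphism. For case~2 of the definition I would first reduce to case~1 by composing with the isomorphism $\Gl_{(U_2,W_2)}\cong\Gl_{(W_2,U_2)}$ supplied by the earlier proposition on swapping the two spaces, so that the remaining data is an honest case-1 isomorphism of linear systems.

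For the converse, suppose $\Psi\colon\Gl_{(U_1,W_1)}\to\Gl_{(U_2,W_2)}$ is a Lie algebra isomorphism. The key point is that the commutator subalgebra is an intrinsic invariant: any Lie algebra isomorphism carries the derived subalgebra of its source onto the derived subalgebra of its target, since $\Psi([\Gl_{(U_1,W_1)},\Gl_{(U_1,W_1)}]) = [\Psi(\Gl_{(U_1,W_1)}),\Psi(\Gl_{(U_1,W_1)})] = [\Gl_{(U_2,W_2)},\Gl_{(U_2,W_2)}]$. By the earlier proposition identifying $\Sl_{U,W}$ with the commutator subalgebra of $\Gl_{U,W}$, this equality reads $\Psi(\Sl_{(U_1,W_1)}) = \Sl_{(U_2,W_2)}$, so $\Psi$ restricts to a Lie algebra isomorphism $\Sl_{(U_1,W_1)}\cong\Sl_{(U_2,W_2)}$. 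Theorem~\ref{slls} then immediately gives $(U_1,W_1)\cong(U_2,W_2)$, completing this direction.

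I do not expect a serious obstacle, since Theorem~\ref{slls} does the heavy lifting and the corollary is essentially a reduction. The only points demanding care are the routine bracket computation in the forward direction and the clean handling of case~2 of the definition via the swap isomorphism. The conceptual crux, which I would state explicitly, is the observation that the commutator subalgebra is preserved by every isomorphism; this is precisely what transports the problem from $\Gl$ down to $\Sl$, where the already-established theorem applies.
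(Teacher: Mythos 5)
Your proposal is correct and takes essentially the same route as the paper: the forward direction via the explicit map $u\otimes w\mapsto f(u)\otimes g(w)$ (your case-2 reduction through the swap isomorphism $\Gl_{(W_2,U_2)}\cong\Gl_{(U_2,W_2)}$ composes to exactly the paper's map $-g\otimes f$), and the converse by noting that an isomorphism carries commutator subalgebra to commutator subalgebra, hence restricts to $\Sl_{(U_1,W_1)}\cong\Sl_{(U_2,W_2)}$, after which Theorem~\ref{slls} applies. No gaps; your write-up just makes the bracket verification and the derived-subalgebra invariance more explicit than the paper does.
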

\begin{proof}
$\Longleftarrow$ If we have a linear system isomorphism $f\colon  U_1\to U_2$ and $g\colon W_1\to W_2$, then 
\[f\otimes g\colon  u\otimes w\mapsto f(u)\otimes g(w)\quad \forall u\otimes w\in\Gl_{U_1,W_1}\]
 induces an isomorphism of Lie algebras.
 If we have isomorphisms $f\colon  U_1\to W_2$ and $g\colon W_1\to U_2$, then  
 \[-g\otimes f\colon  u\otimes w\mapsto -g(w)\otimes f(u)\quad\forall u\otimes w\in \Gl_{U_1,W_1}\]
  induces an isomorphism of Lie algebras.
  
$\Longrightarrow$ Conversely, if $\Gl_{U_1,W_1}\cong \Gl_{U_2,W_2}$ then the commutator subalgebras $\Sl_{U_1,W_1}$ and $\Sl_{U_2,W_2}$ are isomorphic. By Proposition \ref{slls} we conclude that $(U_1,W_1)\cong (U_2,W_2)$.
\end{proof}

  We call a linear system $(U,W)$ \emph{countable dimensional} if both $U$ and $W$ are countable dimensional. In such cases we can say more about $\Gl_{U,W}$ and $\Sl_{U,W}$ because of the following observation in \cite{Mackey}. The next theorem and corollary tell us that, up to isomorphism, there is only one countable dimensional linear system. 
  
\begin{mythm}\cite{Mackey} \label{dualbases}
For any countable dimensional linear system $(U,W)$, there exist dual bases $\{\tilde{u}_i\}_{i\in \mathbb{N}}$ of $U$ and $\{\tilde{w}_i\}_{i\in \mathbb{N}}$ of $W$, where $\langle \tilde{u}_i, \tilde{w}_j\rangle = \delta_{ij}\;\forall i,j$.
\end{mythm}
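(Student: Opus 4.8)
The plan is to build the dual bases recursively by a biorthogonalization procedure (a Gram--Schmidt over the pairing), interleaved with a back-and-forth argument that forces both $U$ and $W$ to be exhausted. Fix once and for all arbitrary enumerated bases $\{a_i\}_{i\in\mathbb{N}}$ of $U$ and $\{b_i\}_{i\in\mathbb{N}}$ of $W$; these serve purely as bookkeeping to guarantee nothing is left out. I then construct $\tilde u_1,\tilde u_2,\ldots\in U$ and $\tilde w_1,\tilde w_2,\ldots\in W$ one pair at a time, maintaining at each stage $n$ the invariant $\langle\tilde u_i,\tilde w_j\rangle=\delta_{ij}$ for all $i,j\le n$. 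Writing $U_n:=\mathrm{span}(\tilde u_1,\ldots,\tilde u_n)$ and $W_n:=\mathrm{span}(\tilde w_1,\ldots,\tilde w_n)$, the invariant says exactly that the Gram matrix of these two tuples is the identity.

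The engine of the recursion is a decomposition that refines Proposition 2.1 to the dual-basis setting. Granting the invariant at stage $n$, every $u\in U$ splits as $u=\sum_{i=1}^n\langle u,\tilde w_i\rangle\,\tilde u_i+u'$ with $u'\in W_n^\perp:=\{x\in U:\langle x,W_n\rangle=0\}$, so that $U=U_n\oplus W_n^\perp$; symmetrically $W=W_n\oplus U_n^\perp$. A short check using nondegeneracy of $\langle\cdot,\cdot\rangle$ on $U\times W$ shows the restricted pairing $W_n^\perp\times U_n^\perp\to\mathbb{C}$ is again nondegenerate, i.e. $(W_n^\perp,U_n^\perp)$ is a complementary subsystem. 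At an odd stage I take the first basis vector $a_k$ not lying in $U_n$, project out its $W_n^\perp$-component $u'\neq 0$, set $\tilde u_{n+1}:=u'$, and invoke nondegeneracy of the complementary subsystem to find $\tilde w_{n+1}\in U_n^\perp$ with $\langle\tilde u_{n+1},\tilde w_{n+1}\rangle=1$. Since $\tilde u_{n+1}\in W_n^\perp$ and $\tilde w_{n+1}\in U_n^\perp$, the new vectors pair to zero against all earlier $\tilde w_j$ and $\tilde u_i$, so the invariant survives, and $a_k\in U_{n+1}$. The even stages are the mirror image: I absorb the first $b_l\notin W_n$ by taking $\tilde w_{n+1}$ to be its $U_n^\perp$-component and then solving for $\tilde u_{n+1}\in W_n^\perp$.

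Alternating the two kinds of steps yields exhaustion: each $a_k$ is captured at some odd stage and each $b_l$ at some even stage, so $\bigcup_n U_n=U$ and $\bigcup_n W_n=W$, whence $\{\tilde u_i\}$ spans $U$ and $\{\tilde w_i\}$ spans $W$. Linear independence is then automatic from the invariant: if $\sum_i c_i\tilde u_i=0$, pairing with $\tilde w_j$ gives $c_j=0$, and dually for the $\tilde w_i$. Thus $\{\tilde u_i\}$ and $\{\tilde w_i\}$ are dual bases of $U$ and $W$.

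The projection formula and the preservation of the invariant are routine; the point needing care is the exhaustion. A naive Gram--Schmidt that only ever enlarges $U_n$ would span $U$ but could fail to span $W$, so the main obstacle is the back-and-forth bookkeeping: one must confirm that controlling only $\tilde u_{n+1}$ on odd steps and only $\tilde w_{n+1}$ on even steps still drives both unions up to the full spaces, the uncontrolled partner vector merely enlarging the opposite span and never obstructing a later absorption. The only other delicate point is checking that $(W_n^\perp,U_n^\perp)$ remains nondegenerate at every stage, so that the required partner always exists. Countability enters solely through the $\mathbb{N}$-indexing of $\{a_i\}$ and $\{b_i\}$, which makes the recursion an ordinary $\mathbb{N}$-indexed one and the resulting dual bases countable.
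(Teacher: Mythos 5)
Your proof is correct, but it is not the argument the paper gives, and the difference is substantive. The paper runs a \emph{one-sided} Gram--Schmidt: it fixes bases $\{u_i\}$ of $U$ and $\{w_i\}$ of $W$, processes the $u_i$ in order, sets $\tilde{u}_{n+1} = u_{n+1} - \sum_{i\le n}\langle u_{n+1},\tilde{w}_i\rangle\,\tilde{u}_i$, and whenever $\langle \tilde{u}_{n+1}, w_{n+1}\rangle = 0$ it repairs $w_{n+1}$ by adding to it a later basis vector $w_j$ before biorthogonalizing. That guarantees $\mathrm{span}(\tilde{u}_1,\dots,\tilde{u}_N) = \mathrm{span}(u_1,\dots,u_N)$ for every $N$, so the $\tilde{u}_i$ do form a basis of $U$, and the $\tilde{w}_i$ are a linearly independent biorthogonal family; but since the $w$'s are modified as the process runs (each $\tilde{w}_i$ drags in later $w_j$'s), the closing sentence ``by induction this will give us the dual bases'' never verifies that the $\tilde{w}_i$ span $W$ --- exactly the failure mode you flag when you note that a Gram--Schmidt which only ever enlarges $U_n$ ``would span $U$ but could fail to span $W$.'' Your back-and-forth scheme is built to close that hole: alternating between absorbing the first unabsorbed $a_k$ into $U_n$ and the first unabsorbed $b_l$ into $W_n$, with the new partner vector always taken inside the complementary subsystem $(W_n^\perp, U_n^\perp)$ --- whose nondegeneracy you correctly derive from the decompositions $U = U_n \oplus W_n^\perp$ and $W = W_n \oplus U_n^\perp$ --- forces exhaustion of \emph{both} spaces, so spanning on the $W$ side comes for free rather than being assumed. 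This is essentially Mackey's original argument. The trade-off: the paper's version has lighter bookkeeping and gives explicit formulas for the new vectors in terms of the chosen bases, while yours needs the complement decomposition and the alternation, but in exchange it actually proves the one delicate point of the theorem.
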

\begin{proof}
 Start with any basis $\{u_i\}_{i\in \mathbb{N}}$ of $U$ and $\{w_i\}_{i\in \mathbb{N}}$ of $W$ and perform the Gram-Schmidt algorithm.
 
  Step 1:  If $\langle u_1,w_1\rangle \ne 0$ then go to step 2. If not, then by non-degeneracy of the form $\langle\cdot,\cdot\rangle$ there is a $w_j$ such that $\langle u_1,w_j\rangle \ne 0$. Replace $w_1$ by $w_1+w_j$.
  
  Step 2:  Set $\tilde{u}_1 = u_1$. Adjust $w_1$ by a scalar to obtain $\tilde{w}_1$ such that $\langle \tilde{u}_1,\tilde{w}_1\rangle = 1$. Replace $u_2$ by $\tilde{u}_2 = u_2-\langle u_2,\tilde{w}_1\rangle \tilde{u}_1$, whence $\langle \tilde{u}_2,\tilde{w}_1\rangle = 0$. Now adjust $w_2$ to make sure $\langle \tilde{u}_2, w_2\rangle = 1$ and set $\tilde{w}_2 = w_2 - \langle u_1,w_2\rangle \tilde{w}_1$. We have $\langle \tilde{u}_1, \tilde{w}_2\rangle = 0$ and $\langle \tilde{u}_2,\tilde{w}_2\rangle = \langle \tilde{u}_2,w_2\rangle - \langle u_1,w_2\rangle \langle\tilde{u}_2,\tilde{w}_1\rangle = 1$.
  
  Step 3:  Having obtained $\tilde{u}_1,\cdots, \tilde{u}_n$ and $\tilde{w}_1,\cdots, \tilde{w}_n$, we set 
\[
\tilde{u}_{n+1} = u_{n+1} - \sum_{i = 1}^n \langle u_{n+1}, \tilde{w}_i\rangle \tilde{u}_i,
\]
 and apply argument in step 1 to adjust $w_{n+1}$ such that $\langle\tilde{u}_{n+1},w_{n+1}\rangle = 1$. We also set
\[
   \tilde{w}_{n+1} = w_{n+1} - \sum_{i = 1}^n \langle \tilde{u}_{i}, w_{n+1}\rangle \tilde{w}_i.
\]
By induction this will give us the dual bases.
\end{proof}

\begin{mycol}
If $(U,W)$ is countable dimensional, then $(U,W)\cong (V,V_*)$. As a consequence $\Sl_{U,W}\cong \Sl_\infty$ and $\Gl_{U,W}\cong \Gl_\infty$.
\end{mycol}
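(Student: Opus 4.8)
The plan is to produce an explicit isomorphism of linear systems $(U,W)\cong (V,V_*)$ and then feed it into the already-established Theorem~\ref{slls} and Corollary~\ref{glls} to transport the isomorphism to the attached Lie algebras. The crucial input is Theorem~\ref{dualbases}: since $(U,W)$ is countable dimensional, it supplies dual bases $\{\tilde u_i\}_{i\in\mathbb{N}}$ of $U$ and $\{\tilde w_i\}_{i\in\mathbb{N}}$ of $W$ with $\langle \tilde u_i,\tilde w_j\rangle=\delta_{ij}$. This is precisely the numerical data that the model system $(V,V_*)$ already carries, because the basis $\mathcal{B}=\{e_i\}$ of $V$ and its dual $\mathcal{B}^*=\{e^i\}$ of $V_*$ satisfy $\langle e_i,e^j\rangle=e^j(e_i)=\delta_{ij}$.

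Concretely, I would define the two linear maps $f\colon U\to V$ and $g\colon W\to V_*$ on basis vectors by $f(\tilde u_i)=e_i$ and $g(\tilde w_i)=e^i$, extended by linearity. Each sends a basis bijectively onto a basis, so both are vector space isomorphisms. It then remains to verify the compatibility condition of part (1) of the definition of an isomorphism of linear systems, namely $\langle f(u),g(w)\rangle=\langle u,w\rangle$ for all $u\in U$ and $w\in W$. On basis elements this is immediate: $\langle f(\tilde u_i),g(\tilde w_j)\rangle=\langle e_i,e^j\rangle=\delta_{ij}=\langle \tilde u_i,\tilde w_j\rangle$. Bilinearity of the two pairings then propagates the identity from basis elements to all of $U\times W$, so $(U,W)\cong (V,V_*)$.

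With the isomorphism of linear systems in hand, the two consequences are direct citations: Theorem~\ref{slls} yields $\Sl_{U,W}\cong \Sl_{V,V_*}=\Sl_\infty$, and Corollary~\ref{glls} yields $\Gl_{U,W}\cong \Gl_{V,V_*}=\Gl_\infty$. I do not expect any genuine obstacle in this corollary; the entire difficulty has been absorbed into Theorem~\ref{dualbases}, whose Gram-Schmidt construction is what makes a countable dimensional system rigid. Once the dual bases exist, constructing $f$ and $g$ and checking the pairing is routine, and the passage to $\Sl$ and $\Gl$ requires no further work beyond invoking the preceding results.
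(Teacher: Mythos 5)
Your proposal is correct and follows exactly the paper's own argument: the paper likewise defines $f(\tilde u_n)=e_n$, $g(\tilde w_n)=e^n$ from the dual bases supplied by Theorem~\ref{dualbases}, notes that these preserve the pairing, and then cites Theorem~\ref{slls} and Corollary~\ref{glls}. Your write-up is in fact slightly more explicit than the paper's (spelling out the bilinearity step), but there is no substantive difference.
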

\begin{proof}
Given a pair of dual bases $\{u_n\}_{n\in\mathbb{N}}$ of $U$ and $\{w_n\}_{n\in\mathbb{N}}$ of $W$, the isomorphisms $f:U\to V$ and $g:W\to V_*$ defined by $f(u_n) = e_n$ and $g(w_n) = e^n$ preserve the form $\langle\cdot,\cdot\rangle$. Therefore the linear systems $(U,W)$ and $(V,V_*)$ are isomorphic. By Theorem \ref{slls} and Corollary \ref{glls}, we conclude that there are isomorphisms of Lie algebras $\Sl_{U,W}\cong \Sl_\infty$ and $\Gl_{U,W}\cong \Gl_\infty$.
\end{proof}

  Theorem \ref{dualbases} clearly holds for finite-dimensional linear systems as well, but fails for uncountable dimensional linear systems. Consider $(X: = W\oplus W^*,X':=W\oplus W^*)$ where $W$ is a countable dimensional vector space and the form $X\times X'\to \mathbb{C}$ is given by
  \[(w+w^*,v+v^*)\mapsto w^*(v)+v^*(w)\quad \forall v,w\in W, v^*,w^*\in W^*.\]
If there were a pair of dual bases, then $W^\perp \subset X'$ would be uncountable dimensional. However, we observe that $W^\perp = W\subset X'$.

%%%%%%%%%%%%%%%%%%%%%%%%%%%%%%%%%%%%%%%%%%%%%%%%%%%%%%%%%%%%%%%%%

\bigskip
\section{The Mackey Lie algebras $\Glm_{U,W}$}
In this section we introduce a canonical Lie algebra $\Glm_{U,W}$ corresponding to the linear system $(U,W)$. We discuss the relations between $\Sl_{U,W},\Gl_{U,W}$ and $\Glm_{U,W}$, and realize $U$ and $W$ as non-isomorphic modules of $\Glm_{U,W}$. 

\begin{mydef} We embed $W\hookrightarrow U^*$ and $U\hookrightarrow W^*$ using the pairing $\langle\cdot,\cdot\rangle$. Set 
\[
\Glm_{U,W}:= \{\varphi\in \End(U)|\;\varphi^*(W)\subset W\},\quad
\Glm_{W,U}:= \{\psi\in \End(W)|\;\psi^*(U)\subset U\}.
\] 
Then $\Glm_{U,W}$ and $\Glm_{W,U}$ are Lie subalgebras of the Lie algebras $\End(U)$ and $\End(W)$, respectively. 

It's easy to see that there is an isomorphism $\Glm_{U,W} \cong \Glm_{W,U}$ of Lie algebras. The map that sends $\varphi \in \Glm_{U,W}$ to $-\varphi^*_{|W}$, is a Lie algebra homomorphism whose inverse homomorphism is given by sending $\psi\in \Glm_{W,U}$ to $-\psi^*_{|U}$.
\end{mydef}

\begin{myprop}[Penkov, Serganova]
The Lie algebra $\Gl_{U,W}$ is isomorphic to the ideal $S$ of $\Glm_{U,W}$, where
\[
S := \{\varphi\in \Glm_{U,W}\colon \dim \varphi(U)<\infty, \dim\varphi^*(W)<\infty\}.
\]
\end{myprop}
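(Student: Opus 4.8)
The plan is to exhibit an explicit Lie algebra isomorphism $\Phi\colon \Gl_{U,W}\to S$ and then verify separately that $S$ is an ideal. Recall $\Gl_{U,W}=U\otimes W$ as a vector space. Using the embedding $W\hookrightarrow U^*$, I would define $\Phi$ on decomposable tensors by
\[
\Phi(u\otimes w)\colon U\to U,\qquad x\mapsto \langle x,w\rangle\, u,
\]
and extend by linearity; this is well defined since the assignment is bilinear in $(u,w)$. Each $\Phi(u\otimes w)$ is a rank-one operator, so $\dim \Phi(u\otimes w)(U)\le 1$. A direct computation of the dual gives $\Phi(u\otimes w)^*(f)=f(u)\,w$ for $f\in U^*$, so the image of $\Phi(u\otimes w)^*$ is $\mathbb{C}w\subseteq W$; in particular $\Phi(u\otimes w)^*(W)\subseteq W$ with $\dim\Phi(u\otimes w)^*(W)\le 1$. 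Hence every decomposable tensor, and therefore every element of $U\otimes W$ (a finite sum of such), maps into $S$.

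Next I would check that $\Phi$ is a homomorphism of Lie algebras. Composing two rank-one operators yields $\Phi(u_1\otimes w_1)\,\Phi(u_2\otimes w_2)=\langle u_2,w_1\rangle\,\Phi(u_1\otimes w_2)$, and subtracting the opposite composition reproduces exactly the bracket
\[
[u_1\otimes w_1,u_2\otimes w_2]=\langle u_2,w_1\rangle\,u_1\otimes w_2-\langle u_1,w_2\rangle\,u_2\otimes w_1
\]
defining $\Gl_{U,W}$, so $\Phi([a,b])=[\Phi(a),\Phi(b)]$. Injectivity is then immediate, since $\Phi$ is the composite of the canonical inclusion $U\otimes W\hookrightarrow U\otimes U^*$ (induced by $W\hookrightarrow U^*$) with the standard injection $U\otimes U^*\hookrightarrow \End(U)$ onto the finite-rank operators.

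The main step is surjectivity onto $S$. Given $\varphi\in S$, finiteness of rank lets me write $\varphi=\sum_{i=1}^n u_i\otimes f_i$ with $\{u_i\}$ a basis of $\im\varphi$ and $f_i\in U^*$. The point is to show the $f_i$ actually lie in $W\subset U^*$. Since $\{u_i\}$ is linearly independent in $U$ and $U\hookrightarrow W^*$ is injective, the functionals $w\mapsto\langle u_i,w\rangle$ on $W$ are linearly independent, so there exist $w_1,\dots,w_n\in W$ with $\langle u_i,w_j\rangle=\delta_{ij}$. Computing $\varphi^*(w_j)=\sum_i\langle u_i,w_j\rangle f_i=f_j$ and invoking $\varphi^*(W)\subseteq W$ (which holds because $\varphi\in\Glm_{U,W}$) shows $f_j=\varphi^*(w_j)\in W$. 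Thus $\varphi=\Phi\big(\sum_i u_i\otimes f_i\big)$ with $\sum_i u_i\otimes f_i\in\Gl_{U,W}$, which proves surjectivity. I expect this extraction of the $f_i$ into $W$ to be the only genuinely delicate point, as it is precisely where the Mackey condition $\varphi^*(W)\subset W$ and the nondegeneracy of the pairing are used together.

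Finally, to see that $S$ is an ideal of $\Glm_{U,W}$, I would take $\psi\in\Glm_{U,W}$ and $\varphi\in S$ and examine $[\psi,\varphi]=\psi\varphi-\varphi\psi$. Finiteness of rank is inherited because $(\psi\varphi)(U)\subseteq\psi(\varphi(U))$ and $(\varphi\psi)(U)\subseteq\varphi(U)$ are finite-dimensional; dually, $[\psi,\varphi]^*=\varphi^*\psi^*-\psi^*\varphi^*$ maps $W$ into $W$ since $\psi^*$ and $\varphi^*$ both preserve $W$, and its image is finite-dimensional because $\varphi^*\psi^*(W)\subseteq\varphi^*(W)$ and $\psi^*\varphi^*(W)\subseteq\psi^*(\varphi^*(W))$ are both finite-dimensional. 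Hence $[\psi,\varphi]\in S$, completing the argument.
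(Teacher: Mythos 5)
Your proof is correct, and while the map you call $\Phi$ is exactly the paper's $\iota$ (and your verifications that it is an injective Lie algebra homomorphism landing in $S$, and that $S$ is an ideal, match the paper's), your surjectivity argument is genuinely different. The paper proceeds via finite-dimensional subsystems: it introduces $S_{U_f,W_f} = \{\varphi \in \End(U) \colon \varphi(U)\subset U_f,\ \varphi^*(W)\subset W_f\}$, shows $\ker\varphi \supset W_f^\perp$ so that $\dim S_{U_f,W_f} \le \dim U_f \cdot \dim W_f$, and concludes by a dimension count that $\iota$ restricts to an isomorphism $\Gl_{U_f,W_f} \cong S_{U_f,W_f}$; surjectivity then follows because every $\varphi \in S$ lies in some $S_{U_f,W_f}$ (which implicitly requires knowing that any pair of finite-dimensional subspaces of $U$ and $W$ can be enlarged to a finite-dimensional subsystem). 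You instead argue directly: write the finite-rank operator $\varphi$ as $\sum_i u_i \otimes f_i$ with $\{u_i\}$ a basis of $\im\varphi$ and $f_i \in U^*$, produce $w_j \in W$ dual to the $u_i$ using nondegeneracy of the pairing, and observe that $f_j = \varphi^*(w_j) \in W$ by the Mackey condition. Your route is more elementary and self-contained --- it needs no dimension count and no existence statement for subsystems --- and it isolates cleanly where $\varphi^*(W)\subset W$ enters. What the paper's route buys in exchange is the explicit identification $\Gl_{U_f,W_f} \cong S_{U_f,W_f}$ for each finite-dimensional subsystem, which the paper reuses immediately afterwards when it identifies these subalgebras inside $\Glm_{U,W}$.
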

\begin{proof} First of all, it is clear that $[h,\varphi] = h\varphi-\varphi h$ has finite-dimensional image in both $U$ and $W$ if one of $h,\varphi$ does. Thus, $S$ is an ideal.
Consider the injection of Lie algebras $\iota\colon \Gl_{U,W}\to \Glm_{U,W}$ induced by $u\otimes w \mapsto \langle \cdot,w\rangle u$. The range of $\iota$ is indeed in $\Glm_{U,W}$ because  
\[(\iota(u\otimes w))^*(\langle\cdot,y\rangle) = \langle \langle \cdot,w\rangle u,y\rangle = \langle  \cdot,\langle u,y\rangle w\rangle\in W\quad\forall \langle\cdot,y\rangle\in W.\]
Moreover, since elements of $U\otimes W$ are finite linear combinations of pure tensors, under $\iota$ they have finite-dimensional images in both $U$ and $W$, which means that $\im \iota\subset S$.

 We now show that $\im \iota \supset S$. Let $(U_f,W_f)$ be a finite-dimensional subsystem of $(U,W)$. Consider the Lie subalgebra 
\[
S_{U_f,W_f} := \{\varphi\in \End(U)\colon  \varphi(U)\subset U_f \text{ and } \varphi^*(W)\subset W_f\}\subset \Glm_{U,W}.
\] 
Observe that $\langle \varphi(v),W\rangle = \langle v,\varphi^*(W)\rangle = 0$ for all $v\in W_f^\perp.$
Therefore $U\supset \ker \varphi \supset W_f^\perp$ for any $\varphi\in S_{U_f,W_f}$.  In addition, since $\varphi(U)\subset U_f$, the subspace $S_{U_f,W_f}$ can at most have dimension $\dim U_f \cdot \dim W_f$. Since $\Gl_{U_f,W_f}$ injects into $S_{U_f,W_f}$ under $\iota$ and does have dimension $\dim U_f\cdot \dim W_f$, we conclude that $\iota$ restricts to an isomorphism from $\Gl_{U_f,W_f}$ to $S_{U_f,W_f}$. Since any $\varphi\in S$ lies in some $S_{U_f,W_f}$, we have $\im\iota = S$.\end{proof}
\begin{mycol}
If $(U,W)$ is a finite-dimensional linear system, then $\Gl_{U,W}\cong \Glm_{U,W}$.
\end{mycol}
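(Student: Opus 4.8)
The plan is to deduce this immediately from the preceding Proposition, which supplies an isomorphism $\Gl_{U,W}\cong S$, where $S$ is the ideal of operators $\varphi\in\Glm_{U,W}$ with $\dim\varphi(U)<\infty$ and $\dim\varphi^*(W)<\infty$. It therefore suffices to show that when $(U,W)$ is finite-dimensional one has $S=\Glm_{U,W}$; the whole content of the corollary is that ``finite rank'' imposes no restriction once $U$ and $W$ are themselves finite-dimensional.

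First I would observe that both defining conditions of $S$ are vacuous in this setting: for any $\varphi\in\End(U)$ we have $\dim\varphi(U)\le\dim U<\infty$, and likewise $\dim\varphi^*(W)\le\dim W<\infty$. Hence every element of $\Glm_{U,W}$ already lies in $S$, so that $S=\Glm_{U,W}$, and combining this with $\Gl_{U,W}\cong S$ yields the claim. To make the picture fully transparent I would also note the complementary fact that the embedding $W\hookrightarrow U^*$ induced by the pairing is here an isomorphism: it is injective by nondegeneracy, and $\dim W=\dim U=\dim U^*$ by the first Proposition applied to the whole (finite-dimensional) system. Consequently the constraint $\varphi^*(W)\subset W$ appearing in the definition of $\Glm_{U,W}$ is automatically satisfied, giving $\Glm_{U,W}=\End(U)$ and reproving $S=\Glm_{U,W}$ from the other direction.

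There is essentially no obstacle here; the statement is a formal corollary of the preceding Proposition. The only point requiring a moment's care is to confirm that the \emph{two} finiteness conditions defining $S$ — the one on $\varphi$ and the one on its dual $\varphi^*$ — are \emph{simultaneously} trivial, which is immediate precisely because $U$ and $W$ are both finite-dimensional.
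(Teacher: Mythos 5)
Your proof is correct and matches the paper's intent: the paper states this as an immediate corollary of the preceding proposition (with no written proof), and your argument—that both finiteness conditions defining $S$ are vacuous when $U$ and $W$ are finite-dimensional, so $S=\Glm_{U,W}$ and hence $\Gl_{U,W}\cong\Glm_{U,W}$—is exactly the intended filling-in of that step. Your supplementary observation that $W\hookrightarrow U^*$ is onto in this case, so that $\Glm_{U,W}=\End(U)$, is a correct and harmless bonus.
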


  We identify $\Gl_{U_f,W_f}$ with $S_{U_f,W_f}$, and $\Gl_{U,W}$ with $S$ inside $\Glm_{U,W}$ whenever appropriate.

\begin{myprop}\cite[Lemma~6.1]{PSer} 
The Lie algebra $\Glm_{U,W}$ has a unique simple ideal $\Sl_{U,W}$.
\end{myprop}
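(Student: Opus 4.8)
The plan is to split the statement into existence and uniqueness. Existence amounts to checking that $\Sl_{U,W}$ is an ideal of $\Glm_{U,W}$, since its simplicity has already been established and the identification $\Sl_{U,W}\subset\Gl_{U,W}=S\subset\Glm_{U,W}$ realizes it as a subspace. As $S$ is an ideal we already have $[\Glm_{U,W},S]\subset S$, and I would sharpen this to $[\Glm_{U,W},S]\subset\Sl_{U,W}$ by a trace computation. For $\varphi\in\Glm_{U,W}$ and a rank-one element $u\otimes w\in S$ (these span $S$), composing in $\End(U)$ gives $[\varphi,u\otimes w]=\varphi(u)\otimes w-u\otimes\varphi^*(w)$, and applying the pairing yields trace $\langle\varphi(u),w\rangle-\langle u,\varphi^*(w)\rangle=0$ by the defining property $\langle\varphi(u),w\rangle=\langle u,\varphi^*(w)\rangle$ of the adjoint. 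Hence $[\Glm_{U,W},\Sl_{U,W}]\subset[\Glm_{U,W},S]\subset\Sl_{U,W}$, so $\Sl_{U,W}$ is a simple ideal.

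For uniqueness, let $I$ be any simple ideal of $\Glm_{U,W}$. The crucial remark is that since both $I$ and $S$ are ideals, $[I,S]\subset I\cap S$, and $I\cap S$ is an ideal of $\Glm_{U,W}$, hence in particular an ideal of the simple algebra $I$. This produces a clean dichotomy depending on whether or not $I\cap S$ vanishes.

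If $I\cap S\ne 0$, simplicity of $I$ forces $I\cap S=I$, so $I\subset S=\Gl_{U,W}$; being simple, $I$ is perfect, so $I=[I,I]\subset[\Gl_{U,W},\Gl_{U,W}]=\Sl_{U,W}$ by the proposition identifying the commutator subalgebra. Then $I$ is a nonzero ideal of the simple algebra $\Sl_{U,W}$, giving $I=\Sl_{U,W}$. If instead $I\cap S=0$, then $[I,S]=0$, so $I$ centralizes $S$; I would then show the centralizer of $S$ in $\Glm_{U,W}$ is exactly $\mathbb{C}\,\Id$. Indeed, if $\varphi$ commutes with every $u\otimes w$, the relation $\langle x,w\rangle\varphi(u)=\langle x,\varphi^*(w)\rangle u$ holds for all $x\in U$; fixing $u$ and picking, by nondegeneracy, some $w$ and $x_0$ with $\langle x_0,w\rangle\ne 0$ shows $\varphi(u)\in\mathbb{C}u$ for every $u$, whence $\varphi$ is scalar. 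Thus $I\subset\mathbb{C}\,\Id$ would be abelian, contradicting simplicity, so this case cannot occur and $I=\Sl_{U,W}$.

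The routine parts are the trace identity and the perfectness argument; the genuine obstacle is the case $I\cap S=0$, whose exclusion rests on the centralizer computation — the fact that nothing in $\Glm_{U,W}$ beyond the scalar operators can commute with all finite-rank operators. Getting this commutation relation to pin $\varphi$ down to a single scalar, rather than a possibly $u$-dependent eigenvalue, is the one point requiring care.
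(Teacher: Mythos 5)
Your proof is correct, and there is in fact no internal proof to compare it against: the paper quotes this proposition from \cite[Lemma~6.1]{PSer} without argument. Your write-up is therefore a genuine, self-contained replacement, and it is well matched to the paper, since every ingredient you invoke is established earlier in the text: the simplicity of $\Sl_{U,W}$, the identification of $\Gl_{U,W}$ with the finite-rank ideal $S$, and $[\Gl_{U,W},\Gl_{U,W}]=\Sl_{U,W}$. The existence half is right: $[\varphi,u\otimes w]=\varphi(u)\otimes w-u\otimes\varphi^*(w)$ lands in $S$ precisely because $\varphi^*(W)\subset W$, and the adjoint identity $\langle\varphi(u),w\rangle=\langle u,\varphi^*(w)\rangle$ kills the trace, so $[\Glm_{U,W},S]\subset\Sl_{U,W}$ and in particular $\Sl_{U,W}$ is an ideal. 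The uniqueness dichotomy via $[I,S]\subset I\cap S$ is clean, and the exclusion of the case $I\cap S=0$ by a centralizer computation is essentially the same argument the paper uses to show that the center of $\Glm_{U,W}$ is $\mathbb{C}\Id$; your version actually proves the stronger fact needed here (the centralizer of $S$, not just of $\Glm_{U,W}$, is $\mathbb{C}\Id$), and does so more carefully, since you pick $w$ by nondegeneracy rather than assuming a dual functional lies in $W$. The uniformity worry you flag at the end dissolves on inspection: since $w$ and $x_0$ are chosen once and for all, the identity $\langle x_0,w\rangle\varphi(u)=\langle x_0,\varphi^*(w)\rangle u$, valid for all $u$ simultaneously, exhibits the single scalar $\lambda=\langle x_0,\varphi^*(w)\rangle/\langle x_0,w\rangle$ with $\varphi=\lambda\Id$. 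One convention should be made explicit: ``simple'' must mean non-abelian — you use this both when asserting $I=[I,I]$ and in the final contradiction — and this convention is forced by the statement itself, since otherwise $\mathbb{C}\Id$ would be a second, one-dimensional simple ideal and uniqueness would fail.
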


\begin{myprop}\label{glmls}
The Lie algebras $\Glm_{U_1,W_1}$ and $\Glm_{U_2,W_2}$ are isomorphic iff the linear systems $(U_1,W_1)$ and $(U_2,W_2)$ are isomorphic.
\end{myprop}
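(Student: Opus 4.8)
The plan is to mirror the proof of Corollary~\ref{glls}, replacing the characterization ``$\Sl_{U,W}$ is the commutator subalgebra of $\Gl_{U,W}$'' by the structural fact just established, that $\Sl_{U,W}$ is the \emph{unique} simple ideal of $\Glm_{U,W}$. For the implication $\Longrightarrow$, suppose $\theta\colon \Glm_{U_1,W_1}\to \Glm_{U_2,W_2}$ is a Lie algebra isomorphism. Since $\theta$ carries ideals to ideals and preserves simplicity, $\theta(\Sl_{U_1,W_1})$ is a simple ideal of $\Glm_{U_2,W_2}$, hence equals $\Sl_{U_2,W_2}$ by uniqueness. Thus $\theta$ restricts to an isomorphism $\Sl_{U_1,W_1}\cong \Sl_{U_2,W_2}$, and Theorem~\ref{slls} gives $(U_1,W_1)\cong (U_2,W_2)$. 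This half is essentially immediate once the unique-simple-ideal proposition is in hand.

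For the implication $\Longleftarrow$ I would treat the two cases of the definition of a linear system isomorphism separately. In case~(1), given isomorphisms $f\colon U_1\to U_2$ and $g\colon W_1\to W_2$ with $\langle f(u),g(w)\rangle = \langle u,w\rangle$, the natural candidate is conjugation, $\Phi(\varphi):= f\circ\varphi\circ f^{-1}$, which is manifestly a Lie algebra isomorphism $\End(U_1)\to \End(U_2)$. The real content is to show that $\Phi$ maps $\Glm_{U_1,W_1}$ onto $\Glm_{U_2,W_2}$, i.e. preserves the Mackey condition $\varphi^*(W)\subset W$. First I would record that the pairing identity translates, under the embeddings $W_i\hookrightarrow U_i^*$, into $f^*(W_2) = W_1$ (the functional $\langle\cdot,g(w)\rangle$ on $U_2$ pulls back along $f$ to $\langle\cdot,w\rangle$ on $U_1$). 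Then from $\Phi(\varphi)^* = (f^*)^{-1}\varphi^* f^*$ together with $\varphi^*(W_1)\subset W_1$, I would read off $\Phi(\varphi)^*(W_2) = (f^*)^{-1}\varphi^*(W_1)\subset (f^*)^{-1}(W_1) = W_2$; running the same computation for $f^{-1}$ shows $\Phi$ is a bijection between the two Mackey algebras.

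In case~(2), with $f\colon U_1\to W_2$ and $g\colon W_1\to U_2$ satisfying $\langle g(w),f(u)\rangle = \langle u,w\rangle$, I would avoid constructing a map by hand and instead compose two known isomorphisms: the canonical isomorphism $\Glm_{U_1,W_1}\cong \Glm_{W_1,U_1}$ from the Definition, followed by the case~(1) isomorphism applied to the linear system isomorphism $(W_1,U_1)\cong (U_2,W_2)$ that $g,f$ provide (here $g$ plays the role of the map on the first component and $f$ that on the second, and the given identity is exactly the pairing-preservation condition of case~(1)). The main obstacle is the verification in case~(1) that conjugation respects the invariance condition $\varphi^*(W)\subset W$; once the identification $f^*(W_2) = W_1$ is pinned down, the dual computation closes the argument, and everything else is either a transcription of the $\Gl_{U,W}$ case or a direct appeal to the unique-simple-ideal proposition and Theorem~\ref{slls}.
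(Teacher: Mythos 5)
Your proposal is correct and follows essentially the same route as the paper: the forward direction via the unique-simple-ideal property plus Theorem~\ref{slls}, and the backward direction via conjugation, using the pairing identity to see that $f^*$ exchanges $W_2$ and $W_1$ (the paper pins this down even more precisely as $f^*_{|W_2}=g^{-1}$), with case~(2) handled by composing with the canonical swap $\Glm_{U,W}\cong\Glm_{W,U}$ — you merely swap first and conjugate second, while the paper conjugates first and swaps second.
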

\begin{proof}
$\Longleftarrow$ This is obvious because the definition of $\Glm_{U,W}$ is intrinsic to the linear system $(U,W)$. In the following we construct the isomorphism explicitly. If we have a linear system isomorphism $f\colon U_1\to U_2$ and $g\colon W_1\to W_2$, then we first show that $f^*_{|W_2} = g^{-1}$ and $(f^{-1})^*_{|W_1} = g$. Indeed
\[
f^*(\langle\cdot,w_2\rangle) = \langle f(\cdot),w_2\rangle = \langle f^{-1}f(\cdot), g^{-1}(w_2)\rangle = \langle \cdot, g^{-1}(w_2)\rangle
\quad \forall w_2\in W_2,
\]
\[
(f^{-1})^*(\langle\cdot,w_1\rangle) = \langle f^{-1}(\cdot),w_1\rangle = \langle ff^{-1}(\cdot), g(w_1)\rangle = \langle \cdot, g(w_1)\rangle
\quad \forall w_1\in W_1.
\]

Consider the map that sends $\varphi\in \Glm(U_1,W_1)$ to $f \varphi f^{-1}$. By the above
\[
(f\varphi f^{-1})^*_{|W_2} = ((f^{-1})^* \varphi^* f^*)_{|W_2} = g\varphi^* g^{-1},
\]
where the right hand side keeps $W_2$ stable. This map is a homomorphism of Lie algebras between $\Glm_{U_1,W_1}$ and $\Glm_{U_2,W_2}$, which has an inverse that sends $\psi\in \Glm_{U_2,W_2}$ to $f^{-1}\psi f$.

If we have a linear system isomorphism $f\colon U_1\to W_2$ and $g\colon W_1\to U_2$, then the map that sends $\varphi$ to $f\varphi f^{-1}$ is an isomorphism of Lie algebras $\Glm_{U_1,W_1}\cong \Glm_{W_2,U_2}$ by the similar argument as above. But the latter is isomorphic to $\Glm_{U_2,W_2}$.

 $\Longrightarrow$ Conversely if $\Glm_{U_1,W_1}\cong \Glm_{U_2,W_2}$, then $\Sl_{U_1,W_1}\cong\Sl_{U_2,W_2}$ because they are both the unique simple ideals of the respective Lie algebras. By Theorem \ref{slls}, the linear systems $(U_1,W_1)$ and $(U_2,W_2)$ are isomorphic.
  \end{proof}
\begin{mycol}
If $(U,W)$ is countable,  then $\Glm_{U,W}\cong  \Glm_{V,V_*}$.
\end{mycol}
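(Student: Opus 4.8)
The plan is to deduce this statement directly from two results already in hand, so that no new construction is required. The observation is that this corollary is purely formal once one recognizes that ``countable'' forces a unique isomorphism type of linear system.

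First I would invoke the earlier corollary asserting that any countable dimensional linear system $(U,W)$ is isomorphic to $(V,V_*)$. That corollary rests on Theorem~\ref{dualbases}, which produces via the Gram--Schmidt procedure a pair of dual bases $\{\tilde u_i\}$ of $U$ and $\{\tilde w_i\}$ of $W$ satisfying $\langle\tilde u_i,\tilde w_j\rangle=\delta_{ij}$; sending $\tilde u_i\mapsto e_i$ and $\tilde w_i\mapsto e^i$ then gives vector space isomorphisms $f\colon U\to V$ and $g\colon W\to V_*$ preserving the pairing. Thus $(U,W)\cong(V,V_*)$ in the sense of the Definition of isomorphic linear systems.

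Having established that $(U,W)\cong(V,V_*)$, I would then apply Proposition~\ref{glmls}, which states that $\Glm_{U_1,W_1}\cong\Glm_{U_2,W_2}$ precisely when the underlying linear systems are isomorphic. Taking $(U_1,W_1)=(U,W)$ and $(U_2,W_2)=(V,V_*)$ yields $\Glm_{U,W}\cong\Glm_{V,V_*}$ immediately. If one wishes to make the isomorphism explicit rather than merely cite the proposition, the forward direction of Proposition~\ref{glmls} already records the construction: conjugation $\varphi\mapsto f\varphi f^{-1}$ by the space isomorphism $f\colon U\to V$ furnished above, which carries $\Glm_{U,W}$ onto $\Glm_{V,V_*}$ because $f^*_{|V_*}=g^{-1}$ ensures the stability condition $\varphi^*(W)\subset W$ is transported to the stability condition defining $\Glm_{V,V_*}$.

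There is no genuine obstacle here: the entire content has been discharged in advance. The substantive work lies earlier, namely in the Gram--Schmidt argument of Theorem~\ref{dualbases} that pins down the unique countable dimensional linear system, and in the explicit conjugation isomorphism verified in Proposition~\ref{glmls}. Consequently I expect the proof to consist of two sentences chaining these two facts, and the only point requiring any care is to confirm that the notion of isomorphism of linear systems used in the countability corollary matches the hypothesis of Proposition~\ref{glmls}, which it does by construction.
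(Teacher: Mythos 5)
Your proposal is correct and is exactly the argument the paper intends: the corollary is stated without proof precisely because it follows by chaining the earlier corollary $(U,W)\cong(V,V_*)$ (via Theorem~\ref{dualbases}) with Proposition~\ref{glmls}. Your additional remark on the explicit conjugation isomorphism simply reproduces the construction already given in the proof of Proposition~\ref{glmls}, so nothing new is needed.
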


  In addition, we observe the following about $\Glm_{U,W}$.
\begin{myprop}\
\begin{enumerate}
\item $\Glm_{U,W}\subset \Glm_{U,U^*} = \End(U)$.
\item $\Glm_{U,W}$ has one dimensional center $\mathbb{C}$Id.
\item $\Gl_{U,W}\oplus \mathbb{C}$Id is an ideal of $\Glm_{U,W}$.
\end{enumerate}
\end{myprop}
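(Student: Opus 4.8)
The plan is to dispose of parts (1) and (3) directly from the definitions and from results already established, and to spend essentially all the effort on part (2), which is the only statement with content. For (1), recall that by definition every element of $\Glm_{U,W}$ is an endomorphism of $U$, so $\Glm_{U,W}\subset\End(U)$ at once. Taking the particular linear system $(U,U^*)$ with its canonical evaluation pairing, the defining condition $\varphi^*(U^*)\subset U^*$ is automatic for every $\varphi\in\End(U)$, since $\varphi^*$ always maps $U^*$ into itself; hence $\Glm_{U,U^*}=\End(U)$, and (1) follows.

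For (2), the inclusion $\mathbb{C}\Id\subset Z(\Glm_{U,W})$ is immediate, since $[\Id,\varphi]=0$ for every $\varphi$ and $\Id\in\Glm_{U,W}$. For the reverse inclusion, I would test a central element $\varphi$ against the rank-one operators $P_{u,w}\colon x\mapsto\langle x,w\rangle u$, which lie in $\Gl_{U,W}\subset\Glm_{U,W}$ for all $u\in U$ and $w\in W$. Computing
\[
[\varphi,P_{u,w}](x)=\langle x,w\rangle\,\varphi(u)-\langle x,\varphi^*(w)\rangle\,u,
\]
centrality forces $\langle x,w\rangle\,\varphi(u)=\langle x,\varphi^*(w)\rangle\,u$ for all $x\in U$. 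Fixing any $w\neq 0$ and using nondegeneracy, if $u$ and $\varphi(u)$ were linearly independent then the coefficient $\langle\cdot,w\rangle$ of $\varphi(u)$ would have to vanish identically, a contradiction; hence $\varphi(u)\in\mathbb{C}u$ for every $u$. The standard argument that a linear map preserving every line is a scalar (compare $\varphi(u_1+u_2)$ with $\varphi(u_1)+\varphi(u_2)$ for linearly independent $u_1,u_2$) then yields $\varphi\in\mathbb{C}\Id$. The case $\dim U=1$ is trivial, as there $\End(U)=\mathbb{C}\Id$.

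For (3), I would combine two facts: that $S=\Gl_{U,W}$ is already an ideal of $\Glm_{U,W}$ (the proposition of Penkov and Serganova above), and the centrality of $\Id$ from part (2). For $\varphi\in\Glm_{U,W}$, $a\in\Gl_{U,W}$ and $c\in\mathbb{C}$ one has $[\varphi,a+c\,\Id]=[\varphi,a]\in\Gl_{U,W}$, so that $[\Glm_{U,W},\,\Gl_{U,W}\oplus\mathbb{C}\Id]\subset\Gl_{U,W}\subset\Gl_{U,W}\oplus\mathbb{C}\Id$, which is the ideal property. The only genuine obstacle in the whole proposition is the reverse inclusion in (2): everything there rests on extracting, from commutation with the finite-rank operators alone, the fact that a central element preserves every line, after which the reduction to a scalar is routine.
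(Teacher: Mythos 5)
Your proof is correct, and for part (2) it rests on the same basic idea as the paper's proof --- testing a central element against rank-one operators coming from $\Gl_{U,W}$ --- but your execution is more systematic and, in one respect, more careful. The paper argues by contradiction: it picks $u$ with $\varphi(u)=u'$ not proportional to $u$, extends $\{u,u'\}$ to a basis of $U$, and takes $\psi$ to be the operator sending $u'\mapsto u$ and killing all other basis vectors, asserting that ``clearly'' $\psi\in\Gl_{U,W}$. That assertion is actually delicate: this $\psi$ is $u\otimes\xi$ where $\xi$ is the coordinate functional of $u'$ relative to the chosen basis, and for an arbitrary basis extension $\xi$ need not lie in $W\subset U^*$ (already for $(V,V_*)$ one can extend $\{e_2,e_1\}$ by the vectors $e_i+e_1$, $i\ge 3$, making $\xi$ an infinite sum of the $e^i$), so $\psi$ need not belong to $\Gl_{U,W}$. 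Your test operators $P_{u,w}$ with $w\in W$ are by construction the images of $u\otimes w$ under the embedding $\Gl_{U,W}\hookrightarrow\Glm_{U,W}$, so membership is automatic; the commutator identity then shows $\varphi$ preserves every line, and scalarity follows by the standard argument. So your version is watertight at the cost of a slightly longer computation, whereas the paper's shorter version is only correct if the test operator is chosen compatibly with $W$. Parts (1) and (3) are essentially identical in both treatments (the paper dismisses (1) as obvious and proves (3) by noting that the sum of the two ideals $\Gl_{U,W}$ and $\mathbb{C}\Id$ is an ideal); the one point you leave implicit in (3) is the directness of the sum, i.e.\ $\mathbb{C}\Id\cap\Gl_{U,W}=0$, which the paper does record.
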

\begin{proof}\text{ }
\begin{enumerate}
\item Obvious.
\item Assume to the contrary that $\varphi$ is in the center of $\Glm_{U,W}$ and $\varphi\not\in \mathbb{C}\Id$. Then there exists $u\in U$ such that $\varphi(u) = u'$ and $u\ne \lambda u'$ for any $\lambda\in\mathbb{C}$, in particular $u\ne 0$. Extend the linearly independent set $\{u,u'\}$ to a basis of $U$, and define a linear operator $\psi$ such that $\psi(u') = u$ and $\psi(v) = 0$ for any $v$ in the basis that does not equal $u'$. Clearly $\psi\in\Gl_{U,W}\subset \Glm_{U,W}$, but 
\[
\varphi\circ\psi(u) = 0 \ne u = \psi\circ\varphi(u).
\]
Contradiction.
\item $\mathbb{C}$Id $\cap \Gl_{U,W} = 0$. The sum of two ideals is an ideal. \qedhere
\end{enumerate}
\end{proof}

Define $\Glm_\infty := \Glm_{V,V_*}$. Then $\Glm_\infty$ is isomorphic to the Lie algebra $\Glm_{U,W}$ for any countable dimensional linear system $(U,W)$. In fact, $\Glm_\infty$ can be conveniently thought of as a certain Lie algebra of matrices.

Define $\Mat^M_{\mathbb{N}}$ to be the vector space consisting of matrices $(A_{ij})_{i,j\in\mathbb{N}}$ such that each row and each column has finitely many nonzero entries. Then $\Mat^M_{\mathbb{N}}$ is a Lie algebra and there is an isomorphism of Lie algebras $\Glm_\infty\cong \Mat^M_{\mathbb{N}}$.

  We obtain the following commutative diagrams
\[\begin{tikzcd}
  \Sl_\infty \arrow[hookrightarrow]{r}\arrow[leftrightarrow]{d}
  & \Gl_\infty \arrow[hookrightarrow]{r}\arrow[leftrightarrow]{d}
  & \Glm_\infty \arrow[leftrightarrow]{d}\\
  \Mat^0_\mathbb{N}\arrow[hookrightarrow]{r}
  & \Mat_\mathbb{N}\arrow[hookrightarrow]{r}
  & \Mat^M_\mathbb{N}. 
\end{tikzcd}\]

\bigskip
  We know that $V$ and $V^*$ have dual $\Gl_\infty$-module structures and $V_*$ is a submodule of $V^*$. Generalizing this, we give $U$ and $U^*$ dual $\Glm_{U,W}$-module structures, and let $W$ be a submodule of $U^*$. By restriction, $U$ and $W$ become $\Gl_{U,W}$ and $\Sl_{U,W}$ modules as well. The actions are explicitly given by
\begin{align*}
& \varphi\cdot u = \varphi(u),\quad \varphi\cdot w = -\varphi^*(w)\quad\text{for } \varphi\in \Glm_{U,W}, u\in U,w\in W,\\
& u\otimes w\cdot x = \langle x,w\rangle u,\quad u\otimes w\cdot y = -\langle u,y\rangle w\quad\text{for } u\otimes w\in \Gl_{U,W}, x\in U, y\in W.
\end{align*}

Note that for any nonnegative integers $p,q$, the tensor product $U^{\otimes p}\otimes W^{\otimes q}$ also becomes a $\Glm_{U,W}$-module. Moreover, $U$ and $W$ are not isomorphic as $\Glm_{U,W},\Gl_{U,W}$ or $\Sl_{U,W}$-modules, simply because $U\not\cong W$ as $\Sl_{U_f,W_f}$-modules for finite-dimensional subsystems $(U_f,W_f)$ with $\dim U_f = \dim W_f \ge 3$. Indeed, we know the following decomposition of $\Sl_{U_f,W_f}$-modules
\[
U\cong U_f\oplus \left(\bigoplus_{n\in\mathbb{N}}\mathbb{C}\right)\quad\text{and}\quad 
W\cong W_f\oplus \left(\bigoplus_{n\in\mathbb{N}}\mathbb{C}\right),
\]
but $W_f\cong U_f^*\not\cong U_f$ as $\Sl_{U_f,W_f}$-modules if $\Sl_{U_f,W_f}\not\cong \Sl(2)$.
\bigskip

%%%%%%%%%%%%%%%%%%%%%%%%%%%%%%%%%%%%%%%%%%%%%%%%%%%%%%%%%%%%%%%%%

\section{The categories $\mathbb{T}_{\Sl_{U,W}}$, $\mathbb{T}_{\Glm_{U,W}}$ and the functor $\mathcal{F}^h$ }
In this section we present results about the categories $\mathbb{T}_{\Sl_{U,W}}$ and $\mathbb{T}_{\Glm_{U,W}}$ introduced in \cite{PSer}. In particular, an object in $\mathbb{T}_{\Sl_{U,W}}$ or $\mathbb{T}_{\Glm_{U,W}}$ is isomorphic to a finite length submodule of a direct sum of finitely many copies of $T(U\oplus W)$.  Moreover, the categories of tensor modules $\mathbb{T}_{\Sl_{U,W}}$ and $\mathbb{T}_{\Glm_{U,W}}$ are both equivalent to $\mathbb{T}_{\Sl_\infty}$ as monoidal categories. 

\begin{mydef} Let $M$ be an $\Sl_{U,W}$-module, we say $M$ is $\Sl_{U,W}$-integrable iff for every $\varphi\in \Sl_{U,W},m\in M$, we have $\dim\{m,\varphi\cdot m,\varphi^2\cdot m,\cdots\}<\infty$.

A subalgebra $\mathfrak{k}$ of $\Sl_{U,W}$ has \emph{finite co-rank} iff it contains $\Sl_{W_f^\perp,U_f^\perp}$ for some finite-dimensional subsystem $(U_f,W_f)$. We say that a $\Sl_{U,W}$-module $M$ satisfies the \emph{large annihilator condition} if for every $m\in M$, its annihilator $\Ann(m):= \{g\in \Sl(U,W)\colon g\cdot m = 0\}$ is a finite co-rank subalgebra of $\Sl_{U,W}$.

 Define $\mathbb{T}_{\Sl_{U,W}}$ to be the full subcategory of $\Sl_{U,W}$-mod where the objects are finite length $\Sl_{U,W}$-integrable modules that satisfy the large annihilator condition. 
\end{mydef}

Despite the abstract definition, the category $\mathbb{T}_{\Sl_{U,W}}$ is nothing but the category of tensor modules.

\begin{mythm}\cite[Corollay~5.12]{PSer} The following are equivalent:
\begin{enumerate}
\item $M$ is an object of $\mathbb{T}_{\Sl_{U,W}}$. 
\item $M$ is isomorphic to a finite length submodule of a direct sum of finitely many copies of $T(U\oplus W)$;
\item $M$ is isomorphic to a finite length subquotient of a direct sum of finitely many copies of $T(U\oplus W)$.
\end{enumerate}
\end{mythm}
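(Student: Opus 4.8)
The plan is to prove the cycle of implications $(2)\imply(3)\imply(1)\imply(2)$, with essentially all of the work concentrated in the final step.

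The implication $(2)\imply(3)$ is immediate, as a submodule is a particular subquotient. For $(3)\imply(1)$, the hypothesis of (3) already supplies finite length, so it remains to verify $\Sl_{U,W}$-integrability and the large annihilator condition. I would first observe that both properties are inherited by submodules, quotients, and finite direct sums; for the large annihilator condition this rests on the fact that a finite intersection of finite co-rank subalgebras is again of finite co-rank, which one sees by enlarging the finitely many subsystems to a common finite-dimensional subsystem $(U_h,W_h)$ and noting that $\Sl_{W_h^\perp,U_h^\perp}$ lies in each. It then suffices to check the two properties for $T(U\oplus W)$ itself. Since both conditions are tested one element at a time, and every element of $T(U\oplus W)$ lies in a finite sum of spaces $X_1\otimes\cdots\otimes X_n$ with each $X_i\in\{U,W\}$, this reduces to the base cases $M=U$ and $M=W$ together with stability under tensor products. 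For $M=U$, every $g\in\Sl_{U,W}$ acts as a finite rank operator, so each $g$-orbit spans a finite-dimensional space, and for $x\in U_f$ the subalgebra $\Sl_{W_f^\perp,U_f^\perp}$ annihilates $x$; the case $M=W$ is symmetric. Both conditions pass to tensor products through the Leibniz action $g\cdot(m\otimes n)=(g\cdot m)\otimes n+m\otimes(g\cdot n)$, which confines orbits to tensor products of the separate finite-dimensional orbit spans and is annihilated by any $g$ killing both factors.

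The implication $(1)\imply(2)$ is the substance of the theorem, and here my plan is to reduce to simple objects and then exploit injectivity. First I would classify the simple objects of $\mathbb{T}_{\Sl_{U,W}}$: integrability lets one decompose the restriction of $M$ to the exhausting family of finite-dimensional reductive subalgebras $\Sl_{U_f,W_f}$, while the large annihilator condition bounds the support of these decompositions; together these should force every simple object to embed into some $U^{\otimes p}\otimes W^{\otimes q}\subset T(U\oplus W)$, the simple tensor modules indexed by pairs of partitions. Next I would show that finite direct sums of the tensor objects $U^{\otimes p}\otimes W^{\otimes q}$ provide enough injectives, so that the injective hull of any finite length object is realized as a finite length submodule of $T(U\oplus W)^{\oplus k}$. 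Granting this, let $M$ satisfy (1). Its socle $\soc M$ has finite length and is a finite direct sum of simple tensor modules, hence embeds into such an injective $J\subset T(U\oplus W)^{\oplus k}$. Because $\soc M$ is essential in $M$ and $J$ is injective, the inclusion $\soc M\hookrightarrow J$ extends to a morphism $M\to J$ whose kernel meets $\soc M$ trivially and is therefore zero, exhibiting $M$ as a finite length submodule of $T(U\oplus W)^{\oplus k}$.

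The main obstacle lies entirely in the two preparatory assertions of the previous paragraph: the classification of the simple objects as tensor modules and the proof that the tensor objects supply enough injectives. This is where the large annihilator condition does the decisive work, compelling an a priori abstract integrable module to be assembled from the finite-dimensional representation theory of the $\Sl_{U_f,W_f}$ in a polynomial, tensorial fashion, and thereby excluding pathological infinite-dimensional modules. By contrast, the passage from the socle embedding to (2) is a soft consequence of essentiality and injectivity, and the implications $(2)\imply(3)$ and $(3)\imply(1)$ are routine verifications of closure properties.
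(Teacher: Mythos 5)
The thesis does not actually prove this theorem: it is imported verbatim from \cite{PSer} (Corollary~5.12 there), so there is no internal proof to compare your attempt against; I can only measure it against the argument in the cited literature. Your treatment of the easy implications is correct and complete: $(2)\imply(3)$ is formal, and for $(3)\imply(1)$ your verification that integrability and the large annihilator condition pass to submodules, quotients, finite direct sums and tensor products, together with the base cases $U$ and $W$ (finite-rank action for integrability, $\Sl_{W_f^\perp,U_f^\perp}\subset\Ann(x)$ for $x\in U_f$, and the common-subsystem argument showing a finite intersection of finite co-rank subalgebras has finite co-rank) is exactly right. Your reduction of $(1)\imply(2)$ --- embed $\soc M$ into an injective object $J$ realized as a finite length submodule of a finite direct sum of copies of $T(U\oplus W)$, extend along the essential inclusion $\soc M\hooklongrightarrow M$ by injectivity, and conclude the extension has zero kernel --- is also sound, and it is in fact the strategy used in \cite{DPS} and \cite{PSer}.

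However, as a proof the attempt has a genuine gap, which you yourself flag: the two assertions carrying all the weight --- (i) every simple object of $\mathbb{T}_{\Sl_{U,W}}$ embeds into some $U^{\otimes p}\otimes W^{\otimes q}$, and (ii) finite direct sums of the modules $U^{\otimes p}\otimes W^{\otimes q}$ supply injective hulls of finite length objects within the category --- are stated as things that ``should'' follow from integrability plus the large annihilator condition, with no argument given. These are not routine closure properties; they are the central theorems of the theory: the classification of simple tensor modules and the injectivity of the mixed tensor powers, proved for $\Sl_\infty$ in \cite{DPS} and extended to arbitrary linear systems in \cite{PSer}, partly by means of the equivalence $\mathbb{T}_{\Sl_{U,W}}\leftrightsquigarrow\mathbb{T}_{\Sl_\infty}$. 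Establishing (ii) requires a genuine computation of extensions (or a Koszulity/duality argument), and establishing (i) requires showing that the large annihilator condition forces a simple integrable module to arise inside a mixed tensor power of $U$ and $W$; neither can be dispatched by the soft closure and socle arguments you use elsewhere. So your proposal is the correct architecture, matching the published proof in outline, but it defers the entire mathematical content to these two unproven lemmas and is therefore not a proof.
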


\begin{mythm}\cite[Theorem~5.5]{PSer}
For any linear system $(U,W)$, there is an equivalence $\mathbb{T}_{\Sl_{U,W}}\leftrightsquigarrow\mathbb{T}_{\Sl_\infty}$ of monoidal tensor categories.
\end{mythm}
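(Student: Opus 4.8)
The plan is to exhibit both categories as the abelian envelope of a single diagrammatic tensor category that does not depend on the underlying linear system. When $(U,W)$ is countable dimensional this is essentially automatic: the preceding corollary yields an isomorphism of linear systems $(U,W)\cong(V,V_*)$ and hence a Lie algebra isomorphism $\Sl_{U,W}\cong\Sl_\infty$ compatible with the linear-system data. Since $\Sl_{U,W}$-integrability, finite length, the large annihilator condition (via Theorem \ref{slls} an isomorphism respects finite-dimensional subsystems, hence finite co-rank subalgebras), and the tensor product of modules are all preserved under such an isomorphism, the induced equivalence $\Sl_{U,W}\text{-mod}\leftrightsquigarrow \Sl_\infty\text{-mod}$ restricts to a monoidal equivalence $\mathbb{T}_{\Sl_{U,W}}\leftrightsquigarrow\mathbb{T}_{\Sl_\infty}$. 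The real content is the general, possibly uncountable, case, in which $\Sl_{U,W}$ is not isomorphic to $\Sl_\infty$, so a direct functor must be built.

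First I would introduce the monoidal category $\mathcal{D}_{U,W}$ whose objects are the words in $U$ and $W$ — that is, the finite tensor products of copies of $U$ and $W$ taken in an arbitrary order — and whose morphisms are $\Sl_{U,W}$-module maps, with tensor product given by concatenation. By the theorem just stated every object of $\mathbb{T}_{\Sl_{U,W}}$ is a finite length subquotient of a finite direct sum of such words, so $\mathbb{T}_{\Sl_{U,W}}$ is recovered from $\mathcal{D}_{U,W}$ by passing to the additive, Karoubian, and then abelian (finite length subquotient) envelope; the same applies to $(V,V_*)$. The candidate functor $\mathcal{F}$ sends a word in $V, V_*$ to the corresponding word in $U, W$ (so $V\mapsto U$, $V_*\mapsto W$, concatenation to concatenation), which renders the eventual equivalence monoidal by construction.

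The heart of the proof is a dimension-free description of the morphism spaces showing that $\mathcal{F}$ is a bijection on Hom-spaces between words. Concretely, I would prove that $\Hom_{\Sl_{U,W}}(x,y)$, for words $x,y$, has a basis indexed by caps-and-permutations diagrams: a partial matching of the $U$-slots with the $W$-slots together with a type-preserving bijection of the remaining slots of $x$ with those of $y$, the associated map being the composite of the corresponding contractions via $\langle\cdot,\cdot\rangle$ and permutations. Two things must be checked uniformly in $(U,W)$: that these maps are linearly independent, which uses the nondegeneracy of the pairing together with the infinite dimensionality of $U$ and $W$, and that they span all module maps. The spanning statement is the main obstacle. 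The subtlety — and the reason the answer is independent of $\dim U$ — is that, in contrast to the finite-dimensional walled Brauer calculus, there is no coevaluation morphism $\mathbb{C}\to U\otimes W$, since the would-be element $\sum_i u_i\otimes w_i$ is an infinite sum and does not lie in $U\otimes W = \Gl_{U,W}$. I would show that equivariance under $\Sl_{U,W}$, combined with the large annihilator condition, forces the coefficient of every diagram containing a cup to vanish, leaving only caps and permutations; this is what makes $\Hom_{\Sl_{U,W}}(x,y)$ canonically the same for every linear system, in particular matching $\Hom_{\Sl_\infty}$.

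Finally, granting that $\mathcal{F}\colon \mathcal{D}_{V,V_*}\to\mathcal{D}_{U,W}$ is a fully faithful monoidal functor that is the identity on the combinatorial indexing of objects and diagrams, I would extend it to the abelian envelopes. Full faithfulness on words is preserved under the additive and Karoubian completions, and an exactness-plus-subquotient argument promotes it to a fully faithful exact functor $\mathbb{T}_{\Sl_\infty}\to\mathbb{T}_{\Sl_{U,W}}$. Essential surjectivity is then immediate from the preceding theorem: every object of $\mathbb{T}_{\Sl_{U,W}}$ is a subquotient of a sum of words, each of which lies in the image of $\mathcal{F}$, and a fully faithful exact functor whose image generates the target under subquotients is an equivalence. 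Compatibility of $\mathcal{F}$ with concatenation makes the resulting equivalence monoidal, completing the argument.
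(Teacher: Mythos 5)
The paper itself offers no proof of this statement: it is imported verbatim from \cite{PSer} (Theorem 5.5 there), so your proposal can only be measured against the argument in that reference. Its broad architecture does mirror that argument: restrict attention to the full monoidal subcategory of ``words'' $U^{\otimes p}\otimes W^{\otimes q}$, show that the Hom spaces between words are spanned by contraction--permutation maps and are therefore canonically independent of the linear system (the decisive structural point being, exactly as you say, that there is no coevaluation $\mathbb{C}\to U\otimes W$ because $\sum_i u_i\otimes w_i$ is an infinite sum), and then pass from words to the whole categories. Identifying the absence of cups as the reason the answer is dimension-free is the right insight.

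However, the two steps where all the content lies are asserted rather than proved, and one of them is sketched in a way that cannot work as stated. First, the spanning claim: an arbitrary $\Sl_{U,W}$-equivariant map between words is not a priori a linear combination of \emph{any} diagrams, so one cannot speak of ``the coefficient of every diagram containing a cup'' --- cups do not even define maps here --- and the large annihilator condition is a property of the modules (satisfied by every word) that places no direct constraint on morphisms. The argument that actually works is different: restrict the map to $\Sl_{U_f,W_f}$ for finite-dimensional subsystems $(U_f,W_f)$ with $\dim U_f$ large compared to the lengths of the words, where classical invariant theory (the walled Brauer calculus) does give a spanning set of diagrams \emph{including} cups, and then show that compatibility of these finite-level expressions as the subsystem grows forces every cup coefficient to vanish, since the cup element $\sum_i u_i\otimes w_i$ changes with $(U_f,W_f)$ while the map does not. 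Second, the passage to the abelian categories: ``additive, Karoubian, then abelian envelope'' is not an operation that automatically promotes a fully faithful monoidal functor on words to an equivalence $\mathbb{T}_{\Sl_\infty}\leftrightsquigarrow\mathbb{T}_{\Sl_{U,W}}$. What makes this step legitimate in \cite{DPS} and \cite{PSer} is the theorem that the words are \emph{injective} objects of these categories; every object then admits a copresentation by finite direct sums of words, and a functor fully faithful on words extends uniquely (up to isomorphism) to an exact equivalence. Without invoking injectivity, or some substitute, defining your functor $\mathcal{F}$ on an arbitrary subquotient and checking independence of the chosen presentation is precisely where the argument has a hole. So: correct skeleton, but both pivotal lemmas are missing, and the mechanism you propose for the first one would need to be replaced, not merely elaborated.
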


The category $\mathbb{T}_{\Glm_{U,W}}$ is defined analogously, but with slight differences.  A subalgebra $\mathfrak{k}$ of $\Glm_{U,W}$ is called \emph{finite co-rank} iff $\mathfrak{t}\supset \Glm_{W_f^\perp,U_f^\perp}$ for some finite-dimensional subsystem $(U_f,W_f)$. A $\Glm_{U,W}$-module $M$ is said to satisfy the \emph{large annihilator condition} if $\Ann(m)$ is a finite co-rank subalgebra of $\Glm_{U,W}$ for any $m\in M$. 

We define $\mathbb{T}_{\Glm_{U,W}}$ to be the full subcategory of $\Glm_{U,W}$-mod whose objects are finite length $\Sl_{U,W}$-integrable modules that satisfy the large annihilator condition. 

Similar to the $\mathbb{T}_{\Sl_{U,W}}$ case, the category $\mathbb{T}_{\Glm_{U,W}}$ is nothing but the category of tensor modules.

\begin{mythm}\cite[Theorem~7.9~a)]{PSer} The following are equivalent:
\begin{enumerate}
\item $M$ is an object of $\mathbb{T}_{\Glm_{U,W}}$;
\item $M$ is isomorphic to a finite length submodule of a direct sum of finitely many copies of $T(U\oplus W)$;
\item $M$ is isomorphic to a finite length subquotient of a direct sum of finitely many copies of $T(U\oplus W)$.
\end{enumerate}
\end{mythm}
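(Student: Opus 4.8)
The plan is to prove the cycle of implications $(2)\imply(3)\imply(1)\imply(2)$, treating $(1)\imply(2)$ as the substantive step and reducing it to the already-established characterization of the objects of $\mathbb{T}_{\Sl_{U,W}}$. The implication $(2)\imply(3)$ is immediate, since a submodule is in particular a subquotient.

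For $(3)\imply(1)$ I would first record that $U$ and $W$ are $\Sl_{U,W}$-integrable: every $\varphi\in\Sl_{U,W}$ has finite-dimensional image, so the orbit $\{x,\varphi\cdot x,\varphi^2\cdot x,\dots\}$ of any $x$ spans a finite-dimensional space. Integrability is inherited by tensor products, finite direct sums, submodules and quotients, so every finite-length subquotient of $T(U\oplus W)^{\oplus k}$ is $\Sl_{U,W}$-integrable. For the large annihilator condition it suffices to treat a single pure tensor $m\in T(U\oplus W)$: choosing a finite-dimensional subsystem $(U_f,W_f)$ whose factors $U_f,W_f$ contain the $U$- and $W$-vectors occurring in $m$, one checks that $\Glm_{W_f^\perp,U_f^\perp}$, embedded into $\Glm_{U,W}$ by extension by zero on $U_f$, annihilates $U_f$ and has dual annihilating $W_f$ (using $\langle W_f^\perp,W_f\rangle=0$), hence kills $m$. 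Thus $\Ann(m)\supset\Glm_{W_f^\perp,U_f^\perp}$ has finite co-rank; this passes to finite sums and to subquotients, giving the large annihilator condition. Since finite length is assumed in $(3)$, we obtain $(1)$.

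For $(1)\imply(2)$ I would restrict $M$ to $\Sl_{U,W}$. The definition of $\mathbb{T}_{\Glm_{U,W}}$ already requires $\Sl_{U,W}$-integrability, and the $\Glm_{U,W}$ large annihilator condition implies the $\Sl_{U,W}$ one because $\Glm_{W_f^\perp,U_f^\perp}\supset\Sl_{W_f^\perp,U_f^\perp}$. After checking that $M$ remains of finite length over $\Sl_{U,W}$, we have $M|_{\Sl_{U,W}}\in\mathbb{T}_{\Sl_{U,W}}$, and the established characterization of that category yields an $\Sl_{U,W}$-equivariant embedding $M|_{\Sl_{U,W}}\hookrightarrow T(U\oplus W)^{\oplus k}$. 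It then remains to upgrade this to a morphism of $\Glm_{U,W}$-modules.

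The main obstacle is exactly this last upgrade, i.e. the claim that the restriction functor $\mathbb{T}_{\Glm_{U,W}}\to\mathbb{T}_{\Sl_{U,W}}$ is full: any $\Sl_{U,W}$-linear map $f$ with tensor-module target is automatically $\Glm_{U,W}$-linear. I would prove this using the large annihilator condition twice. Fix $m$ and $\varphi\in\Glm_{U,W}$, and pick a finite-dimensional subsystem $(U_f,W_f)$ with $\Glm_{W_f^\perp,U_f^\perp}\subset\Ann(m)\cap\Ann(f(m))$. Writing $U=U_f\oplus W_f^\perp$ in block form, the ``large'' diagonal block $D$ of $\varphi$ lies in $\Glm_{W_f^\perp,U_f^\perp}$ and hence kills both $m$ and $f(m)$, while $\varphi-D$ has finite rank and so lies in $\Gl_{U,W}$; thus $\varphi$ and $\varphi-D$ act identically on $m$ and on $f(m)$. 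A second application absorbs the trace: choosing a rank-one $\tau=u_0\otimes w_0$ with $\langle u_0,w_0\rangle=1$ inside the complementary subsystem $(W_f^\perp,U_f^\perp)$, the element $\tau$ again kills $m$ and $f(m)$, so subtracting $\Tr(\varphi-D)\,\tau$ replaces $\varphi-D$ by an element of $\Sl_{U,W}$ acting the same way on $m$ and $f(m)$. Since $f$ commutes with $\Sl_{U,W}$, we conclude $f(\varphi\cdot m)=\varphi\cdot f(m)$. Applying this to the embedding above makes it $\Glm_{U,W}$-equivariant, which gives $(2)$ and closes the cycle. The delicate points on which I expect to spend the most effort are verifying that finite length is preserved under restriction to $\Sl_{U,W}$ and confirming that the complementary pair $(W_f^\perp,U_f^\perp)$ is genuinely a linear system, so that the trace-absorbing $\tau$ exists.
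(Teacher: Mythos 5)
The thesis never proves this statement: it is quoted verbatim from \cite[Theorem~7.9~a)]{PSer}, so there is no in-paper argument to compare yours against. Measured against the cited source, your outline is not a new route but essentially a reconstruction of the Penkov--Serganova argument: the cycle $(2)\imply(3)\imply(1)\imply(2)$ with all the weight on $(1)\imply(2)$, reduced to the known characterization of $\mathbb{T}_{\Sl_{U,W}}$ via fullness of the restriction functor. Your fullness claim is exactly part b) of the same Theorem~7.9 in \cite{PSer}, and your block-decomposition-plus-trace-absorption argument (cut off the diagonal block $D\in\Glm_{W_f^\perp,U_f^\perp}$ supported on the complementary subsystem, note $\varphi-D\in\Gl_{U,W}$, then subtract $\Tr(\varphi-D)\,u_0\otimes w_0$ to land in $\Sl_{U,W}$) is precisely the density argument that this thesis itself quotes later, in the countable-dimensional case, as Proposition \ref{actsdensely} and Lemma \ref{slglm}. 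So the approach is sound and is the ``intended'' one.

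The two points you flag as delicate both check out, and you already possess the tools to close them. First, $(W_f^\perp,U_f^\perp)$ is genuinely a linear system: if $x\in W_f^\perp$ pairs to zero with $U_f^\perp$, then since $W=W_f\oplus U_f^\perp$ and $\langle x,W_f\rangle=0$ by definition of $W_f^\perp$, the vector $x$ pairs to zero with all of $W$, hence $x=0$; the symmetric argument handles the other side. The trace-absorbing element $u_0\otimes w_0$ with $\langle u_0,w_0\rangle=1$ then exists as soon as $W_f^\perp\neq 0$, i.e.\ whenever $U$ is infinite dimensional, which is the only case in which the theorem has content. Second, finite length of $M$ over $\Sl_{U,W}$ needs no separate argument, provided you reorder your steps: run the density argument first, for a single vector $n\in M$, using only the large annihilator condition of $M$ (not of any target); it yields, for every $\varphi\in\Glm_{U,W}$, some $\psi\in\Sl_{U,W}$ with $\varphi\cdot n=\psi\cdot n$. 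Consequently every $\Sl_{U,W}$-submodule of $M$ is automatically $\Glm_{U,W}$-stable, the two submodule lattices coincide, and finite length over $\Glm_{U,W}$ immediately gives finite length over $\Sl_{U,W}$. With density established first and both the length claim and the fullness claim deduced from it as corollaries, your sketch closes completely.
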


\begin{mythm}\cite[Theorem~7.9~b)]{PSer}
For any object $M$ in the category $\mathbb{T}_{\Glm_{U,W}}$, let Res$(M)$ denote $M$ regarded as a module of $\Sl_{U,W}$. Then Res: $\mathbb{T}_{\Glm_{U,W}}\rightsquigarrow\mathbb{T}_{\Sl_{U,W}}$ is a well defined functor that is fully faithful and essentially surjective. Moreover, Res$(M\otimes N) =$ Res$(M)\otimes$Res$(M)$. Thus $\mathbb{T}_{\Glm_{U,W}}$ and $\mathbb{T}_{\Sl_{U,W}}$ are equivalent as monoidal tensor categories.
\end{mythm}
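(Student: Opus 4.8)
The engine of the whole statement is a \emph{locality lemma}: on a tensor module the action of $\Glm_{U,W}$ is realized, element by element, by $\Sl_{U,W}$. Precisely, I would first prove that for every $\varphi\in\Glm_{U,W}$ and every finite-dimensional subsystem $(U_f,W_f)$ with nonzero complementary subsystem (automatic when $(U,W)$ is infinite-dimensional) there is an element $g\in\Sl_{U,W}$ with $\varphi\cdot x=g\cdot x$ for all $x$ in the subalgebra $T(U_f\oplus W_f)\subset T(U\oplus W)$. To build $g$, let $p=\sum_i a_i\otimes b_i\in\Gl_{U,W}$ be the projector onto $U_f$ afforded by dual bases $\{a_i\},\{b_i\}$ of $(U_f,W_f)$ (these exist by the finite case of Theorem \ref{dualbases}), so that $p$ is the projection $U=W_f^\perp\oplus U_f\to U_f$ and $p^*$ the projection $W=U_f^\perp\oplus W_f\to W_f$. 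I would then set $g_0:=p\varphi+\varphi p-p\varphi p$; a direct check shows $g_0$ is finite rank with $g_0^*(W)\subset W$, hence $g_0\in\Gl_{U,W}$, and that $g_0|_{U_f}=\varphi|_{U_f}$ while $g_0^*|_{W_f}=\varphi^*|_{W_f}$. This is exactly what is needed for $g_0$ and $\varphi$ to act identically on $T(U_f\oplus W_f)$, since the diagonal action sees $\varphi$ only through its values on $U_f$ and $\varphi^*$ only through its values on $W_f$. Writing $c\colon\Gl_{U,W}=U\otimes W\to\mathbb{C}$, $u\otimes w\mapsto\langle u,w\rangle$, for the functional with $\ker c=\Sl_{U,W}$, and picking $t=a\otimes b\in\Gl_{U,W}$ with $a\in W_f^\perp$, $b\in U_f^\perp$ and $c(t)=1$ (possible by nondegeneracy on the complement), I set $g:=g_0-c(g_0)\,t$: since $t$ kills $T(U_f\oplus W_f)$ while $c(g)=0$, we get $g\in\Sl_{U,W}$ acting as $\varphi$ on $T(U_f\oplus W_f)$. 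I expect this lemma to be the main obstacle, the delicate point being to match a single finite-rank traceless operator to $\varphi$ \emph{simultaneously} on the $U$-factors (through $\varphi$) and the $W$-factors (through $\varphi^*$); the expression $p\varphi+\varphi p-p\varphi p$ is engineered precisely to make both restrictions come out right at once, using the compatibility $\langle g_0(u),w\rangle=\langle\varphi(u),w\rangle=\langle u,\varphi^*(w)\rangle$ for $u\in U_f$, $w\in W_f$.

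Everything else follows formally, invoking the characterization theorems above that identify objects of $\mathbb{T}_{\Sl_{U,W}}$ and of $\mathbb{T}_{\Glm_{U,W}}$ with finite-length submodules of finite direct sums of $T(U\oplus W)$; in particular every element of such a module is supported on some finite subsystem. The lemma then shows that for any such module the $\Sl_{U,W}$-stable and the $\Glm_{U,W}$-stable subspaces coincide: an $\Sl_{U,W}$-submodule $P$ is $\Glm_{U,W}$-stable because for $x\in P$ and $\varphi\in\Glm_{U,W}$ one has $\varphi\cdot x=g\cdot x\in P$, with $g\in\Sl_{U,W}$ produced from a subsystem supporting $x$. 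Hence $\Sl_{U,W}$-length equals $\Glm_{U,W}$-length, which settles the finite-length requirement in both directions (well-definedness of $\mathrm{Res}$, and membership of lifted modules in $\mathbb{T}_{\Glm_{U,W}}$). The $\Sl_{U,W}$-integrability condition is built into both definitions, and the large annihilator condition transfers in both directions: for $x$ supported on $(U_f,W_f)$ the subalgebra $\Glm_{W_f^\perp,U_f^\perp}$ annihilates $x$ outright, since its elements kill $U_f$ and, as one checks, their adjoints kill $W_f$, while $\Sl_{W_f^\perp,U_f^\perp}\subset\Glm_{W_f^\perp,U_f^\perp}$.

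For full faithfulness I note that $\mathrm{Res}$ is trivially injective on $\Hom$-spaces, and use the lemma for surjectivity: given an $\Sl_{U,W}$-linear $f\colon M\to N$, $\varphi\in\Glm_{U,W}$ and $m\in M$, choose a finite subsystem $(U_f,W_f)$ supporting both $m$ and $f(m)$ and the associated $g\in\Sl_{U,W}$; then $f(\varphi\cdot m)=f(g\cdot m)=g\cdot f(m)=\varphi\cdot f(m)$, so $f$ is $\Glm_{U,W}$-linear. For essential surjectivity, given $N\in\mathbb{T}_{\Sl_{U,W}}$ realized as an $\Sl_{U,W}$-submodule of a finite sum of copies of $T(U\oplus W)$, the lemma makes $N$ a $\Glm_{U,W}$-submodule, and the preceding paragraph shows this $\Glm_{U,W}$-module lies in $\mathbb{T}_{\Glm_{U,W}}$ and restricts to $N$.

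Finally, monoidality is immediate and strict: in either category the tensor product is the $\mathbb{C}$-tensor product equipped with the diagonal action, and since $\Sl_{U,W}\subset\Glm_{U,W}$ acts diagonally, $\mathrm{Res}(M\otimes N)=\mathrm{Res}(M)\otimes\mathrm{Res}(N)$ on the nose, with the associativity and unit constraints visibly preserved. A fully faithful, essentially surjective, strong (indeed strict) monoidal functor is an equivalence of monoidal categories, yielding the desired monoidal equivalence $\mathbb{T}_{\Glm_{U,W}}\simeq\mathbb{T}_{\Sl_{U,W}}$.
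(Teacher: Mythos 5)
The thesis does not prove this theorem at all: it is quoted from \cite[Theorem~7.9~b)]{PSer}, so there is no in-paper proof to compare yours against; what follows is an assessment of your argument on its own terms. Your proof is correct, and its engine --- the locality lemma --- is precisely the dense-action mechanism of \cite{PSer} that the thesis later imports for $\Glm_\infty$ as Proposition~\ref{actsdensely} and Lemma~\ref{slglm}; in effect you have reconstructed the argument the thesis omits. I checked the key computation: for a finite-dimensional subsystem $(U_f,W_f)$ one has $U=U_f\oplus W_f^\perp$ and $W=W_f\oplus U_f^\perp$; your $p$ is the projection onto $U_f$ and $p^*$ the projection onto $W_f$; the operator $g_0=p\varphi+\varphi p-p\varphi p$ satisfies $g_0|_{U_f}=\varphi|_{U_f}$ and $g_0^*|_{W_f}=\varphi^*|_{W_f}$, has finite rank, and satisfies $g_0^*(W)\subset\varphi^*(W_f)+W_f$, which is finite-dimensional, so $g_0$ lies in the ideal $S\cong\Gl_{U,W}$ of Proposition 3.2; the correction $g=g_0-c(g_0)\,t$ with $t=a\otimes b$, $a\in W_f^\perp$, $b\in U_f^\perp$, $\langle a,b\rangle=1$ lies in $\Sl_{U,W}$ and still agrees with $\varphi$ on $T(U_f\oplus W_f)$ because $t$ acts there by zero. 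The downstream deductions --- coincidence of the $\Sl_{U,W}$- and $\Glm_{U,W}$-submodule lattices inside $T(U\oplus W)^{\oplus k}$, hence equality of lengths; transfer of the large annihilator condition through $\Glm_{W_f^\perp,U_f^\perp}$ and $\Sl_{W_f^\perp,U_f^\perp}$; fullness via a single subsystem supporting both $m$ and $f(m)$; essential surjectivity; strict monoidality --- are all sound, and your use of the two characterization theorems is not circular, since the thesis states them as independent prior results.

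Two points you treat implicitly deserve explicit statement. First, infinite-dimensionality of $(U,W)$ is genuinely needed and not only for the existence of $t$: for a finite-dimensional system the large annihilator condition is vacuous and Res fails to be full (the one-dimensional $\Gl_n$-modules $\varphi\mapsto\lambda\Tr(\varphi)$ all restrict to the trivial $\Sl_n$-module), so your parenthetical caveat is carrying real weight and should be promoted to a standing hypothesis. Second, you silently use that any pair of finite-dimensional subspaces of $U$ and $W$ can be enlarged to a finite-dimensional subsystem, and that the pairing on $(W_f^\perp,U_f^\perp)$ is nondegenerate; both are true and implicit in Section 2 of the thesis, but each merits a line of proof in a self-contained write-up.
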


We introduce the notion of \emph{modules twisted by an automorphism}, or \emph{twisted modules}. Let $M$ be a $\mathfrak{g}$-module, and $h\in\Aut(\mathfrak{g})$ be an automorphism of $\mathfrak{g}$. Then the module twisted by $h$ is the same as $M$ as a vector space but with new action
\[
\varphi\cdot_{new} w = h(\varphi)\cdot_{old} w\;\;\;\;\forall \varphi\in \mathfrak{g}, \forall w\in M.
\] 
We denote this new $\mathfrak{g}$-module structure by $M^h$ . In the language of representations, the pull-back of the representation $\rho\colon  \mathfrak{g}\to\Gl(M)$ along $h\colon\mathfrak{g}\to\mathfrak{g}$ gives the representation $\rho\circ h$, which corresponds to the twisted module $M^h$.

Recall that the \emph{socle} of a module $M$, denoted by $\soc M$, is the sum of simple submodules of $M$. If $f:M\to N$ is a $\mathfrak{g}$-module isomorphism, then clearly $f(\soc M) = \soc N$.

We observe the following about twisted modules.
\begin{myprop}\label{functor}
 Let $U,W$ be $\mathfrak{g}$-modules and $g,h\in\Aut(\mathfrak{g})$. Then the following holds:
\begin{enumerate}
\item $(W^h)^g = W^{g\circ h}$ as $\mathfrak{g}$-modules.
\item If $f:W\to U$ is a $\mathfrak{g}$-module homomorphism, then $f:W^h\to U^h$ is also a $\mathfrak{g}$-module homomorphism. In particular, if there is an isomorphism of $\mathfrak{g}$-modules $W\cong U$, then there is also an isomorphism of $\mathfrak{g}$-modules $W^h\cong U^h$.
\item Any $h\in\Aut(\mathfrak{g})$ induces a covariant functor $\mathcal{F}^h$ that sends $W$ to $W^h$, and a morphism $f:W\to U$ to $f:W^h\to U^h$. The functor $\mathcal{F}^h$ has an inverse functor $\mathcal{F}^{h^{-1}}$, thus it is an automorphism of the category $\mathfrak{g}$-mod.
\item The functor $\mathcal{F}^h$ commutes with the contravariant functor $(\;\cdot\;)^*$, that is, $(U^*)^h = (U^*)^h$. 
\item The functor $\mathcal{F}^h$ preserves the socle of a module, that is, $\soc W^h = (\soc W)^h$.
\end{enumerate}
\end{myprop}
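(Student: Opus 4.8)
The plan is to prove all five parts by directly unwinding the definition of the twisted action $\varphi\cdot_{new} w = h(\varphi)\cdot_{old} w$; each claim reduces to a short computation or to a formal consequence of an earlier part, so I would organize the argument to reuse (1) throughout. First I would establish (1) by applying the definition twice: for $\varphi\in\mathfrak{g}$ and $w\in W$,
\[
\varphi\cdot_{(W^h)^g} w = g(\varphi)\cdot_{W^h} w = h\bigl(g(\varphi)\bigr)\cdot_W w,
\]
which is exactly the defining action of the iterated twist by the composite of $g$ and $h$, giving the claimed identity. Part (2) is the same sort of one-line verification: if $f\colon W\to U$ is a $\mathfrak{g}$-homomorphism, then both $f(\varphi\cdot_{W^h} w)$ and $\varphi\cdot_{U^h} f(w)$ unwind to $h(\varphi)\cdot_U f(w)$, so $f$ is again a homomorphism $W^h\to U^h$; applying this to an isomorphism and to its inverse gives the ``in particular'' statement.

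For (3) I would first observe that $\mathcal{F}^h$ fixes identity morphisms and respects composition --- the underlying set-maps are literally unchanged, only the source and target structures are reinterpreted --- so it is a well-defined covariant endofunctor of $\mathfrak{g}$-mod. Specializing (1) to $g = h^{-1}$ then yields $(W^h)^{h^{-1}} = W^{\Id} = W$ on the nose (and symmetrically in the other order), so $\mathcal{F}^{h^{-1}}$ is a two-sided inverse of $\mathcal{F}^h$ and the latter is an automorphism of the category. For (4), whose intended statement is $(U^h)^* = (U^*)^h$, I would evaluate the contragredient action against a test vector: for $\xi\in U^*$ and $u\in U$, both $\bigl(\varphi\cdot_{(U^h)^*}\xi\bigr)(u)$ and $\bigl(\varphi\cdot_{(U^*)^h}\xi\bigr)(u)$ reduce to $-\xi\bigl(h(\varphi)\cdot_U u\bigr)$, so the two module structures on the common underlying space $U^*$ coincide.

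The one conceptually substantive step is (5), and it is where I would spend the most care. The key point is that a subspace $N\subseteq W$ is a $\mathfrak{g}$-submodule of $W$ if and only if it is a $\mathfrak{g}$-submodule of $W^h$: stability under $\{\varphi\cdot_{new}\} = \{h(\varphi)\cdot_{old}\colon \varphi\in\mathfrak{g}\}$ is the same requirement as stability under $\{\psi\cdot_{old}\colon\psi\in\mathfrak{g}\}$, precisely because $h$ is a bijection of $\mathfrak{g}$. Hence $W$ and $W^h$ have identical submodule lattices as subspaces, and by (3) simplicity is preserved under the twist; since the socle is the sum of all simple submodules, $\soc W^h$ and $\soc W$ agree as subspaces, and restricting the twisted action identifies $\soc W^h$ with $(\soc W)^h$ as a module. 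None of the five parts presents a genuine obstacle; the only real pitfalls are bookkeeping ones, namely fixing a single composition-order convention in (1) and (3) and getting the sign in the contragredient action right in (4).
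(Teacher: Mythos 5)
Your proposal takes the same route as the paper's own proof: every part is a direct unwinding of the definition $\varphi\cdot_{new}w=h(\varphi)\cdot_{old}w$, your verifications of (2)--(4) are in substance the ones the paper gives (including the correct reading of the typo in (4), whose intended content is $(U^h)^*=(U^*)^h$), and your submodule-lattice argument for (5) differs only cosmetically from the paper's remark that $\mathcal{F}^h$, being invertible, carries simple submodules to simple submodules and commutes with sums.

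The one genuine issue is in part (1), and it is worth being precise about it because it propagates. Your computation
\[
\varphi\cdot_{(W^h)^g}w \;=\; g(\varphi)\cdot_{W^h}w \;=\; h\bigl(g(\varphi)\bigr)\cdot_W w
\]
is correct, but $h(g(\varphi))=(h\circ g)(\varphi)$, so what it proves is $(W^h)^g=W^{h\circ g}$. Under the standard convention $(g\circ h)(\varphi)=g(h(\varphi))$ this is \emph{not} the stated identity $(W^h)^g=W^{g\circ h}$; your closing phrase ``the composite of $g$ and $h$'' leaves the order unspecified and thereby hides the mismatch. The two module structures genuinely differ in general: if $W$ is a faithful $\mathfrak{g}$-module (as $U$ and $W$ are for $\Sl_{U,W}$), then equality of the actions $(h\circ g)(\varphi)\cdot w$ and $(g\circ h)(\varphi)\cdot w$ for all $\varphi$ and $w$ forces $h\circ g=g\circ h$. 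So, read literally, your last identification is a gap, and the fix is to state (1) as $(W^h)^g=W^{h\circ g}$ (equivalently, to declare that compositions are read left to right). To be fair, this is really an erratum in the paper: its own proof commits the mirror-image slip, asserting that both modules have action $g(h(\varphi))\cdot_{old}w$, which is right for $W^{g\circ h}$ but wrong for $(W^h)^g$. The order is immaterial in part (3), where $g=h^{-1}$ commutes with $h$, but it does matter wherever statement (1) is invoked with two non-commuting automorphisms, as in Corollary \ref{slundef} and Theorem \ref{mainglm} (there $\tau$ and $h$ need not commute), so you should prove and cite the identity with the correct order rather than leave it ambiguous.
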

\begin{proof} \text{ }
\begin{enumerate}
\item The $\mathfrak{g}$-modules $(W^h)^g$ and $W^{g\circ h}$ have the same action
\[
\varphi\cdot_{new} w = g(h(\varphi))\cdot_{old} w\;\;\;\;\forall \varphi\in\mathfrak{g},w\in W.
\]
\item Since $f\colon W\to U$ is a $\mathfrak{g}$-homomorphism, for every $\varphi\in \mathfrak{g}$ we have the following commutative diagram
\[
\begin{tikzcd}
W \arrow{r}{f}\arrow{d}[left]{\varphi} 
  & U\arrow{d}{\varphi}\\
W \arrow{r}{f}
  & U.
\end{tikzcd}
\]
Then for every $\varphi\in\mathfrak{g}$ we also have the following commutative diagram
\[\begin{tikzcd}
W^h \arrow{r}{f}\arrow{d}[left]{h(\varphi)} 
  & U^h\arrow{d}{h(\varphi)}\\
W^h \arrow{r}{f}
  & U^h.
\end{tikzcd}\]
Thus $f\colon W^h\to U^h$ is a $\mathfrak{g}$-module homomorphism.
\item It is clear from statement 2 that $\mathcal{F}^h$ takes a module $W$ to $W^h$ and morphisms $f\colon W\to U$ to the same morphism $f\colon W^h\to U^h$, and is a well-defined functor from $\mathfrak{g}$-mod to itself. The functors $\mathcal{F}^{h^{-1}}$ and $\mathcal{F}^{h}$ are mutually inverse by statement 1, thus $\mathcal{F}^h$ is an automorphism of the category $\mathfrak{g}$-mod.
\item Both actions are given by
\[
(\varphi\cdot u^*)(u) = -u^*(h(\varphi)\cdot u) \quad \forall \varphi\in\mathfrak{g},u\in U, u^*\in U^*.
\]
\item Since $\mathcal{F}^{h}$ has an inverse $\mathcal{F}^{h^{-1}}$, it maps simple submodules to simple submodules. Also it is easy to see that for any submodules $W_1,W_2\subset U$, we have $(W_1+W_2)^h = W_1^h+W_2^h$. Therefore $\soc W^h = (\soc W)^h$.
\qedhere
\end{enumerate}
\end{proof}

One important observation is that $U^h$ and $W^h$ are simple $\Sl_{U,W}$-modules for any $h\in\Aut(\Sl_{U,W})$. Further, we have $\soc (U^h)^* = W^h$ and $\soc (W^h)^* = U^h$. 
\begin{myprop}\label{socle}
As $\Sl_{U,W},\Gl_{U,W}$ and $\Glm_{U,W}$-modules, 
\begin{enumerate}
\item $U$ and $W$ are simple.
\item $\soc U^* = W$ and $\soc W^* = U$.
\item $\soc (U^h)^* = W^h$ and $\soc (W^h)^* = U^h$ for any $h\in\Aut(\Glm_\infty)$.
\end{enumerate}
\end{myprop}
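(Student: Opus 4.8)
The plan is to reduce every assertion to a statement about $\Sl_{U,W}$-modules and then bootstrap up to $\Gl_{U,W}$ and $\Glm_{U,W}$, finally deducing (3) from (2) by the twisting functor of Proposition \ref{functor}. For (1), I would show that $U$ is already simple as an $\Sl_{U,W}$-module; simplicity over the larger algebras $\Gl_{U,W}$ and $\Glm_{U,W}$ is then automatic, since a submodule over a larger algebra is in particular a submodule over a smaller one. To see $\Sl_{U,W}$-simplicity, fix $0\neq u\in U$ and, using nondegeneracy, choose $w\in W$ with $\langle u,w\rangle = 1$. For every $u'$ in the hyperplane $w^\perp := \{u'\in U\colon \langle u',w\rangle = 0\}$ the tensor $u'\otimes w$ lies in $\Sl_{U,W}$ and sends $u\mapsto \langle u,w\rangle u' = u'$; hence the submodule generated by $u$ contains $w^\perp$ together with $u\notin w^\perp$, and since $w^\perp$ has codimension one it equals $U$. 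The same argument, or the isomorphism $\Sl_{U,W}\cong\Sl_{W,U}$, handles $W$.

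The heart of the proof is (2), and I would isolate two computations. First, for any $\xi\in U^*$ and any pure tensor $u'\otimes w'\in\Gl_{U,W}$ the action on $U^*$ is $(u'\otimes w')\cdot\xi = -\xi(u')\,w'\in W$; hence $\Gl_{U,W}\cdot U^*\subseteq W$ and in particular $\Sl_{U,W}\cdot U^*\subseteq W$. Second, $U^*$ has no nonzero $\Sl_{U,W}$-invariants: if $\xi$ were invariant then $\xi(u')w' = 0$ for all $u'\otimes w'\in\Sl_{U,W}$, and choosing for each $u'\neq 0$ a nonzero $w'$ in the hyperplane $\{w'\colon \langle u',w'\rangle = 0\}\subset W$ (nonzero whenever $\dim W\ge 2$, as holds for the systems under consideration) forces $\xi(u') = 0$, so $\xi = 0$. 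Granting these, $W\subseteq\soc U^*$ by (1), and for the reverse inclusion I take a simple submodule $M$: for $\mathfrak{a}\in\{\Sl_{U,W},\Gl_{U,W}\}$, since $M$ carries no invariants there is $\xi\in M$ with $\mathfrak{a}\cdot\xi\neq 0$, whence $0\neq\mathfrak{a}\cdot\xi\subseteq M\cap W$, and simplicity of $M$ gives $M\subseteq W$.

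The one genuinely delicate point, and where I expect the main obstacle, is the $\Glm_{U,W}$ case of (2), because the clean inclusion $\mathfrak{a}\cdot U^*\subseteq W$ fails for $\mathfrak{a}=\Glm_{U,W}$: a general $\varphi\in\Glm_{U,W}$ acts on $U^*$ by $-\varphi^*$, which need not map $U^*$ into $W$. To circumvent this I would exploit that $\Sl_{U,W}$ is an ideal of $\Glm_{U,W}$. For a simple $\Glm_{U,W}$-submodule $M$, the subspace $\Sl_{U,W}\cdot M$ is again a $\Glm_{U,W}$-submodule, since $g\cdot(s\cdot m) = [g,s]\cdot m + s\cdot(g\cdot m)$ with $[g,s]\in\Sl_{U,W}$; hence it is $0$ or $M$. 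It cannot be $0$, as $U^*$ has no $\Sl_{U,W}$-invariants, so it equals $M$, and then the inclusion $\Sl_{U,W}\cdot\xi\subseteq W$ again yields $M\subseteq W$. This establishes $\soc U^* = W$ simultaneously over all three algebras, and $\soc W^* = U$ follows by the $U\leftrightarrow W$ symmetry.

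Finally, (3) is a formal consequence of (2) and Proposition \ref{functor}. For $h\in\Aut(\Glm_\infty)$ we have $(U^h)^* = (U^*)^h$ by part 4 of that proposition, while $\soc$ commutes with $\mathcal{F}^h$ by part 5, so that, using the $\Glm_\infty$-version of (2),
\[
\soc (U^h)^* = \soc\big((U^*)^h\big) = (\soc U^*)^h = W^h,
\]
and likewise $\soc (W^h)^* = U^h$.
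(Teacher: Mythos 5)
Your proof is correct, and it rests on the same basic computation as the paper's --- a rank-one tensor $u'\otimes w'$ acts on $U^*$ by $\xi\mapsto-\xi(u')\,w'\in W$ --- but the logical packaging differs at two points. For (1), the paper exhausts $U$ by finite-dimensional subsystems and invokes the simplicity of $U_f$ as an $\Sl_{U_f,W_f}$-module, whereas you generate $U$ directly from any nonzero vector using the operators $u'\otimes w$ with $u'\in w^\perp$; your argument is more self-contained, since it does not appeal to the finite-dimensional theory. For (2), the paper proves a single stronger claim: \emph{every} nonzero $\Sl_{U,W}$-submodule $X\subseteq U^*$ contains $W$ (if $X\cap W=0$, pick $0\ne u^*\in X$, then $u$ with $u^*(u)\ne 0$, then a nonzero $w\in(\mathbb{C}u)^\perp$; now $u\otimes w\cdot u^*$ is a nonzero multiple of $w$ lying in $X\cap W$, a contradiction). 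This formulation pays for itself: since any $\Gl_{U,W}$- or $\Glm_{U,W}$-submodule is in particular a nonzero $\Sl_{U,W}$-submodule, and $W$ is stable and simple over all three algebras, the socle claim for all three algebras follows at once with no separate treatment of $\Glm_{U,W}$. Your route --- the inclusion $\Sl_{U,W}\cdot U^*\subseteq W$ plus the absence of $\Sl_{U,W}$-invariants --- forces you to handle $\Glm_{U,W}$ separately, and your ideal argument $\Sl_{U,W}\cdot M=M$ is a correct and pleasant way to do it; note, though, that it can be short-circuited: for a $\Glm_{U,W}$-simple submodule $M$ your invariant argument already gives $\xi\in M$ with $0\ne\Sl_{U,W}\cdot\xi\subseteq M\cap W$, and since $W$ is $\Glm_{U,W}$-stable, $M\cap W$ is a nonzero $\Glm_{U,W}$-submodule of $M$, so $M\subseteq W$ by simplicity of $M$. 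Finally, both your proof and the paper's implicitly assume $\dim W\ge 2$ when choosing a nonzero vector in a hyperplane of $W$ (the excluded case $\dim U=\dim W=1$ is trivial), and part (3) is handled identically in both proofs via parts 4 and 5 of Proposition \ref{functor}.
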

\begin{proof} \
\begin{enumerate}
\item It suffices to check the simplicity of $U$ and $W$ as $\Sl_{U,W}$-modules. Let $X\subset U$ be a nontrivial submodule of $U$, then it intersects nontrivially with some finite-dimensional subspace $U_f\subset U$. Since $U_f$ is a simple $\Sl_{U_f,W_f}$-module, we must have $X\supset U_f$. Therefore we must have $X\supset U_f'$ for any finite-dimensional subspace $U_f'\supset U_f$. We conclude that $X = U$, since $U$ is the union of all its finite-dimensional subspaces. The same argument applies to $W$.
\item Let $X$ be a nontrivial $\Sl_{U,W}$-submodule of $U^*$. It suffices to show that $X\supset W$. Since $W$ is a simple module, all we need to show is that $X\cap W\ne 0$. Suppose the contrary, then take any nonzero element $u^*$ in $X$, we must have $u^*\in U^*- W$. There exists some $u\in U$ such that $\langle u,u^*\rangle\ne 0$ by the non-degeneracy of the form $\langle\cdot,\cdot\rangle$. Pick some nonzero $w\in (\mathbb{C}u)^\perp\subset W$, then $u\otimes w\in \Sl_{U,W}$. However $u\otimes w\cdot u^* = \langle u,u^*\rangle w\in W\cap X$. Contradiction. 
The same argument can be applied to prove that $\soc W^* = U$.
\item By Proposition \ref{functor} statement 4 and 5, 
\[
\soc (U^h)^* = \soc (U^*)^h = (\soc U^*)^h = W^h,
\]
\[
\soc (W^h)^* = \soc (W^*)^h = (\soc W^*)^h = U^h. \qedhere
\]
\end{enumerate}
\end{proof}

\bigskip

%%%%%%%%%%%%%%%%%%%%%%%%%%%%%%%%%%%%%%%%%%%%%%%%%%%%%%%%%%%

\section{Automorphisms of $\Sl_\infty$ and $\Glm_\infty$}
In this section we investigate the automorphism groups of $\Sl_\infty$ and $\Glm_\infty$.

We say the linear system $(U,W)$ is \emph{self-dual} if there is an isomorphism of vector spaces $f\colon U\to W$ such that $\langle f^{-1}(w),f(u)\rangle = \langle u,w\rangle$ for every $u\in U,w\in W$. In this case, according to the proof of Proposition \ref{glmls}, we observe that
\[
f^*_{|U} = f,\quad (f^{-1})^*_{|W} = f^{-1}.
\]

By Proposition \ref{glmls}, there is an isomorphism of Lie algebras $\gamma\colon\Glm_{U,W}\to\Glm_{W,U}$ defined by $\gamma\colon\varphi\mapsto f\varphi f^{-1}$. The Lie algebra $\Glm_{W,U}$ is again isomorphic to $\Glm_{U,W}$ by the map $\sigma\colon\varphi\mapsto -\varphi^*_{|U}$. The composition $\tau: =\sigma\gamma$ is an automorphism of $\Glm_{U,W}$, which is explicitly 
\[
\tau\colon \varphi\mapsto - f^{-1}\varphi^* f\quad \forall \varphi\in \Glm_{U,W}.
\]

 From the definition of $\tau$ we see that $\tau$ is an involution. Moreover, for every $\varphi\in \Glm_{U,W}$ the following diagram commutes 
\[\begin{tikzcd}
U \arrow{r}{f}\arrow{d}[left]{\tau(\varphi)} 
  & W\arrow{d}{-\varphi^*}\\
U \arrow{r}{f}
  & W.
\end{tikzcd}\]
Therefore we conclude that $f:U^\tau\to W$ is an isomorphism of $\Glm_{U,W}$-modules. Consequently $W^\tau \cong (U^\tau)^\tau =  U$ as $\Glm_{U,W}$-modules. Since the Lie algebra $\Glm_{U,W}$ has a unique simple ideal $\Sl_{U,W}$, the automorphism $\tau$ restricts to an automorphism of $\Sl_{U,W}$. 

\bigskip
If the linear system $(U,W)$ has a pair of dual bases relative to $\langle\cdot,\cdot\rangle$, then clearly $(U,W)$ is self-dual. The converse is false. Again, consider the linear system $(X,X')$ which is defined at the end of section 2. Let $f:X\to X'$ be the isomorphism given by $f(w,w^*) = (w,w^*)$. Then $\langle f^{-1}(x), f(y)\rangle =  \langle y, x\rangle$ for every $x\in X', y\in X$.  Therefore the linear system $(X,X')$ is self-dual, but it does not have a pair of dual bases. 

In the countable dimensional case, the linear system $(V,V_*)$ has a pair of dual bases. Let $\epsilon:V\to V_*$ be the isomorphism of vector spaces induced by $\epsilon(e_n) = e^n\;\forall e_n\in\mathcal{B}$. Then we observe that $\tau: \varphi\mapsto -\epsilon^{-1}\varphi^* \epsilon$ is the involution of the Lie algebra $\Mat^M_{\mathbb{N}}$ given by $A\mapsto -A^t$.

\bigskip
In the following, let $\mathfrak{g} = \Sl_\infty, \Glm_\infty$. Define $\tilde{G}:= \{g\in\Aut(V)|\;g^*(V_*) = V_*\}$. Conjugation by an element in $\tilde{G}$ induces an automorphism of $\mathfrak{g}$. Therefore we have a group representation $\rho:\tilde{G}\to\Aut(\mathfrak{g})$ with kernel $\mathbb{C}^*$, where $\mathbb{C}^*$ denotes nonzero complex numbers. We set $\tilde{G}_0 := \im \rho$. 

\begin{mythm}[Beidar et.al, \cite{BBCM}]\label{autsl}
The subgroup $\tilde{G}_0$ has index $2$ in $\Aut(\Sl_\infty)$, the quotient is represented by the involution $\tau$.
In fact, $\Aut(\Sl_\infty) = \tilde{G}_0\rtimes \{\Id, \tau\} $.
\end{mythm}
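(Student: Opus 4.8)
The plan is to show that an arbitrary $h\in\Aut(\Sl_\infty)$ is completely controlled by the isomorphism type of the single twisted module $V^h$, which by the observation following Proposition \ref{socle} is a simple $\Sl_\infty$-module. Concretely I would prove the dichotomy that $V^h\cong V$ or $V^h\cong V_*$, and match the two cases to the two cosets: $V^h\cong V$ forces $h\in\tilde{G}_0$, while $V^h\cong V_*$ forces $h\in\tau\tilde{G}_0$. Granting the dichotomy, the theorem drops out by bookkeeping. As reference points, note that if $h=\rho(g)$ is conjugation by $g\in\tilde{G}$ then $g^{-1}\colon V^h\to V$ is an $\Sl_\infty$-isomorphism, so $V^h\cong V$; and $V^\tau\cong V_*$ was established above, whence $V_*^\tau=(V^\tau)^\tau\cong V$ by Proposition \ref{functor}.

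For the converse when $V^h\cong V$: an isomorphism $\phi\colon V^h\to V$ is a linear automorphism of $V$ satisfying $\phi\,h(\varphi)=\varphi\,\phi$ as operators on $V$ for all $\varphi\in\Sl_\infty$, so $h(\varphi)=g\varphi g^{-1}$ with $g:=\phi^{-1}\in\Aut(V)$. It then remains to check $g\in\tilde{G}$, i.e. $g^*(V_*)=V_*$. Computing on rank-one elements gives $g(u\otimes w)g^{-1}=g(u)\otimes (g^{-1})^*(w)$; since every nonzero $w\in V_*$ occurs in some $u\otimes w\in\Sl_\infty$ (choose $u$ with $\langle u,w\rangle=0$) and $h(\Sl_\infty)=\Sl_\infty\subset V\otimes V_*$, we get $(g^{-1})^*(V_*)\subseteq V_*$; applying the same argument to $h^{-1}$ yields $g^*(V_*)\subseteq V_*$, whence $g^*(V_*)=V_*$ and $h=\rho(g)\in\tilde{G}_0$. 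When instead $V^h\cong V_*$, Proposition \ref{functor} gives $V^{\tau\circ h}=(V^h)^\tau\cong V_*^\tau\cong V$, so $\tau\circ h$ falls under the previous case, $\tau\circ h\in\tilde{G}_0$, and hence $h\in\tau\tilde{G}_0$.

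The crux, and what I expect to be the main obstacle, is the dichotomy itself: for arbitrary $h$, why must the simple module $V^h$ be one of the two natural modules rather than some larger simple tensor module? The approach is to observe that $\mathcal{F}^h$ is a \emph{monoidal} auto-equivalence of $\Sl_\infty$-mod, since the coproduct action shows $(M\otimes N)^h=M^h\otimes N^h$, and then to show that it restricts to an auto-equivalence of $\mathbb{T}_{\Sl_\infty}$. This restriction requires that $h$ permute the finite co-rank subalgebras, so that the large annihilator condition (and hence the property of being a tensor module) is preserved under twisting; verifying this invariance is the delicate technical point. Granting it, $\mathcal{F}^h$ permutes the simple objects of $\mathbb{T}_{\Sl_\infty}$ compatibly with $\otimes$ and fixes the trivial object, so it preserves the grading by tensor length; the classification of simple objects of $\mathbb{T}_{\Sl_\infty}$ singles out $V$ and $V_*$ as the only two simples of minimal (length one) degree, and these generate the category under $\otimes$. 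A monoidal auto-equivalence must therefore carry $V$ into $\{V,V_*\}$, which is exactly the dichotomy.

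Finally I would assemble the conclusion. The two cases give $\Aut(\Sl_\infty)=\tilde{G}_0\cup\tau\tilde{G}_0$, so $[\Aut(\Sl_\infty):\tilde{G}_0]\le 2$. The index is exactly $2$ because $\tau\notin\tilde{G}_0$: otherwise $V^\tau\cong V$, contradicting $V^\tau\cong V_*$ together with $V\not\cong V_*$ (noted at the end of Section 3). Since $\tilde{G}_0$ has index $2$ it is normal, $\tau$ is an involution with $\tilde{G}_0\cap\{\Id,\tau\}=\{\Id\}$, and $\tilde{G}_0\{\Id,\tau\}=\Aut(\Sl_\infty)$; hence $\Aut(\Sl_\infty)=\tilde{G}_0\rtimes\{\Id,\tau\}$, with $\tau$ representing the nontrivial coset.
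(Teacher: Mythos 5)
First, a point of orientation: the paper does not prove this theorem at all --- it is imported wholesale from \cite{BBCM} (a substantial ring-theoretic work on Herstein's Lie map conjectures), and the logical flow of Section 5 runs in the opposite direction from yours: the dichotomy ``$V^h\cong V$ or $V^h\cong V_*$'' for automorphisms of $\Sl_\infty$ (Corollary \ref{slundef}) is \emph{deduced from} Theorem \ref{autsl}, and is then used, together with Lemma \ref{slglm} and Proposition \ref{actsdensely}, to compute $\Aut(\Glm_\infty)$. So any complete argument here would be genuinely new relative to the paper. Your outer reduction is correct and well executed: granting the dichotomy, the rank-one computation $g(u\otimes w)g^{-1}=g(u)\otimes(g^{-1})^*(w)$, applied to both $h$ and $h^{-1}$, does give $g^*(V_*)=V_*$ and hence $h\in\tilde{G}_0$; the case $V^h\cong V_*$ correctly reduces to the first case via $\tau$ and Proposition \ref{functor}; and the final group-theoretic bookkeeping is fine. (In fact this mirrors the paper's own proof of Theorem \ref{mainglm}, which does the same coset-matching for $\Glm_\infty$, using socles of dual modules via Proposition \ref{socle} instead of rank-one elements.)

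The genuine gap is exactly at the point you flagged and then ``granted'': the dichotomy itself, which is the entire content of the theorem. Your route to it requires that $\mathcal{F}^h$ restrict to an auto-equivalence of $\mathbb{T}_{\Sl_\infty}$. Simplicity and $\Sl_\infty$-integrability of $V^h$ are indeed automatic (every element of $\Sl_\infty$ acts on $V$ by a finite-rank operator), but the large annihilator condition for $V^h$ reads $\Ann_{V^h}(v)=h^{-1}(\Ann_V(v))\supset\Sl_{W_f^\perp,U_f^\perp}$ for some finite-dimensional subsystem, i.e.\ it demands precisely that $h^{-1}$ carry finite co-rank subalgebras into subalgebras containing finite co-rank subalgebras. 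There is no a priori reason for this: finite co-rank subalgebras are defined via the linear system $(V,V_*)$, not intrinsically Lie-theoretically, and the assertion that an arbitrary automorphism respects this class is, modulo the rest of your (valid) reasoning, equivalent to the theorem being proved --- it is easy for conjugations by $\tilde{G}$ and for $\tau$, but for arbitrary $h$ it is the whole difficulty, and nothing in the paper's toolkit supplies it. A secondary, smaller gap: even granting the restriction, the claim that a monoidal auto-equivalence of $\mathbb{T}_{\Sl_\infty}$ ``preserves the grading by tensor length'' needs an argument (one can get it from $\Hom(V\otimes V_*,\mathbb{C})\neq 0$ forcing the images of $V$ and $V_*$ to be a dual pair, plus the fact that the length of the tensor product of a dual pair of simples exceeds $2$ except in tensor degree one, using the structure theory of \cite{DPS}), but this is repairable; the large annihilator step is not, short of an input of the strength of \cite{BBCM}. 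So what you have is a correct proof of ``dichotomy $\Rightarrow$ theorem,'' not of the theorem.
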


\begin{mycol}\label{slundef}
If $h\in \tilde{G}_0$, then $V^h\cong V$ and $V^{\tau\circ h}\cong V_*$ as $\Sl_\infty$-modules. 
\end{mycol}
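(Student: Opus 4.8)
The plan is to unwind the definition of the twist and exhibit the isomorphisms explicitly. First I would record that, by definition of $\tilde{G}_0 = \im\rho$, an element $h\in\tilde{G}_0$ has the form $h(\varphi) = g\varphi g^{-1}$ for some $g\in\tilde{G}\subset\Aut(V)$, where $\varphi\in\Sl_\infty$ is regarded as an endomorphism of $V$ and the action on $V$ is by evaluation, $\varphi\cdot v = \varphi(v)$. The twisted module $V^h$ is then $V$ as a vector space, equipped with the action $\varphi\cdot_{new} v = h(\varphi)(v) = g\varphi g^{-1}(v)$.

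For the first isomorphism $V^h\cong V$, I would check directly that $g\colon V\to V^h$ is an $\Sl_\infty$-module homomorphism: for $\varphi\in\Sl_\infty$ and $v\in V$,
\[
\varphi\cdot_{new} g(v) = g\varphi g^{-1}g(v) = g(\varphi(v)) = g(\varphi\cdot v),
\]
so $g$ intertwines the standard action on $V$ with the twisted action on $V^h$. Since $g\in\Aut(V)$ is a linear isomorphism, this gives $V\cong V^h$ as $\Sl_\infty$-modules.

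For the second isomorphism $V^{\tau\circ h}\cong V_*$, I would assemble three facts. By Proposition \ref{functor}(1), $V^{\tau\circ h} = (V^h)^\tau$. By Proposition \ref{functor}(2) applied to the isomorphism $V^h\cong V$ just obtained, the functor $\mathcal{F}^\tau$ yields $(V^h)^\tau\cong V^\tau$. Finally, in the countable dimensional case the self-dual system $(V,V_*)$ carries the vector space isomorphism $\epsilon\colon V\to V_*$, and as established earlier in this section the induced map $\epsilon\colon V^\tau\to V_*$ is an isomorphism of $\Glm_\infty$-modules, hence of $\Sl_\infty$-modules by restriction. Chaining these gives $V^{\tau\circ h} = (V^h)^\tau\cong V^\tau\cong V_*$.

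The routine verifications here are immediate; the main point requiring care is bookkeeping rather than genuine difficulty. One must use the conjugation in the correct order, so that $g$ (and not $g^{-1}$) intertwines the actions, and one must invoke the composition rule $(W^h)^g = W^{g\circ h}$ of Proposition \ref{functor}(1) with the inner and outer twists in the right order, since reversing either choice would produce the wrong module.
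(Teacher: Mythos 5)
Your proof is correct and follows essentially the same route as the paper: exhibit $g$ itself as the intertwiner giving $V^h\cong V$, then chain $V^{\tau\circ h} = (V^h)^\tau\cong V^\tau\cong V_*$ using Proposition \ref{functor} and the isomorphism $\epsilon\colon V^\tau\to V_*$. The only (immaterial) difference is a mirror convention: the paper writes $h(\varphi)=g^{-1}\varphi g$ so that $g\colon V^h\to V$ intertwines, whereas you write $h(\varphi)=g\varphi g^{-1}$ so that $g\colon V\to V^h$ does.
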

\begin{proof}
If $h\in \tilde{G}_0$, then $h(\varphi) = g^{-1}\varphi g$ for every $\varphi\in \Sl_\infty$ where $g\in \tilde{G}$. It follows that $g:V^h\to V$ is an automorphism of $\Sl_\infty$-modules. Therefore $V^{\tau\circ h} = (V^h)^\tau \cong V^\tau\cong V_*
$ as $\Sl_\infty$-modules.
\end{proof}

Let $M$ be a $\Glm_\infty$-module. We say that $\Sl_\infty$ \emph{acts densely} on $M$, if for every $\varphi\in \Glm_\infty$ and any choice of finitely many vectors $r_1,\cdots, r_n\in M$, there exists $\psi\in \Sl_\infty$ such that $\psi \cdot r_k = \varphi \cdot r_k$ for $k = 1,\cdots, n$.

\begin{mylem}\cite[Lemma~8.2]{PSer}\label{slglm}
Let $L$ and $L'$ be $\Glm_\infty$-modules which have finite length as $\Sl_\infty$-modules. Then
\[
\Hom_{\Sl_\infty}(L,L') = \Hom_{\Glm_\infty}(L,L').
\]
In particular, if $L$ and $L'$ are isomorphic as $\Sl_\infty$-modules, then they are isomorphic as $\Glm_\infty$-modules.
\end{mylem}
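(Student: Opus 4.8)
The plan is to prove the equality of $\Hom_{\Sl_\infty}(L,L')$ and $\Hom_{\Glm_\infty}(L,L')$ by two inclusions. Since $\Sl_\infty$ is a subalgebra of $\Glm_\infty$, every $\Glm_\infty$-homomorphism commutes in particular with the action of $\Sl_\infty$, so the inclusion $\Hom_{\Glm_\infty}(L,L')\subseteq\Hom_{\Sl_\infty}(L,L')$ is immediate. The entire content is the reverse inclusion, and the whole argument hinges on the density notion introduced just above: the statement that $\Sl_\infty$ acts densely on every $\Glm_\infty$-module that is of finite length as an $\Sl_\infty$-module. I would isolate this as the key ingredient and deduce the lemma from it first.

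Granting density, the reverse inclusion follows by a diagonal trick. Let $f\in\Hom_{\Sl_\infty}(L,L')$, and fix $\varphi\in\Glm_\infty$ and $m\in L$; the goal is $f(\varphi\cdot m)=\varphi\cdot f(m)$. I would apply density to the single module $L\oplus L'$, which is again of finite length over $\Sl_\infty$ (lengths add), and to the single vector $(m,f(m))$: this yields $\psi\in\Sl_\infty$ with $\psi\cdot m=\varphi\cdot m$ and $\psi\cdot f(m)=\varphi\cdot f(m)$ simultaneously. Then
\[
f(\varphi\cdot m)=f(\psi\cdot m)=\psi\cdot f(m)=\varphi\cdot f(m),
\]
where the middle equality is the $\Sl_\infty$-linearity of $f$. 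As $\varphi$ and $m$ are arbitrary, $f\in\Hom_{\Glm_\infty}(L,L')$. The final clause is then immediate: if $f$ is an $\Sl_\infty$-isomorphism, then $f$ and $f^{-1}$ lie in the relevant $\Sl_\infty$-Hom spaces, hence in the $\Glm_\infty$-Hom spaces, so $f$ is a $\Glm_\infty$-isomorphism.

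It remains to establish density, which I would do by a block-truncation argument. Identifying $\Glm_\infty$ with $\Mat^M_{\mathbb{N}}$ and splitting the index set as $\{1,\dots,N\}\sqcup\{N+1,N+2,\dots\}$, a matrix $\varphi\in\Glm_\infty$ breaks into four blocks: the top-left block is finite, while the top-right and bottom-left blocks have only finitely many rows (resp. columns), each with finitely many nonzero entries, so their sum $\varphi_{\mathrm{fin}}$ is finitary, i.e. lies in $\Gl_\infty$. The remaining bottom-right block lies in the Mackey subalgebra $\Glm_{U',W'}$ of the subsystem spanned by the basis vectors of index $>N$. The crucial input is that each vector $m$ of a finite-length $\Glm_\infty$-module is annihilated by this remote Mackey subalgebra once $N$ is large enough, i.e. the large annihilator condition holds. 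For such $N$ (chosen to work for both coordinates of $(m,f(m))$) the bottom-right block kills $m$, so $\varphi\cdot m=\varphi_{\mathrm{fin}}\cdot m$; correcting the finite trace of $\varphi_{\mathrm{fin}}$ by subtracting $\Tr(\varphi_{\mathrm{fin}})\cdot e_K\otimes e^K$ for some $K>N$ — which also annihilates $m$ — produces a traceless finitary $\psi\in\Sl_\infty$ with $\psi\cdot m=\varphi\cdot m$.

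The main obstacle is exactly this large annihilator property: that a $\Glm_\infty$-module of finite $\Sl_\infty$-length is annihilated, vector by vector, by all sufficiently remote elements of $\Glm_\infty$. This is where the structure theory of the preceding section must be invoked, identifying such modules with tensor modules, i.e. submodules of finitely many copies of $T(V\oplus V_*)$, on which remote vanishing is a direct computation on basis tensors. By contrast, the density-plus-diagonal machinery above is routine, so the weight of the proof rests on securing the annihilator property from finiteness of length.
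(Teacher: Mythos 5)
Your reduction of the lemma to density is sound: the inclusion $\Hom_{\Glm_\infty}(L,L')\supseteq\Hom_{\Glm_\infty}(L,L')$ is trivial, the diagonal trick on $L\oplus L'$ applied to the vector $(m,f(m))$ is exactly right, and, \emph{granted} the large annihilator property, your block decomposition $\varphi=\varphi_{\mathrm{fin}}+\varphi_{\mathrm{rem}}$ with the trace correction does produce the required $\psi\in\Sl_\infty$. The genuine gap is the key claim that every $\Glm_\infty$-module of finite $\Sl_\infty$-length satisfies the large annihilator condition, and the justification you offer for it is circular. The structure theorems of Section 4 (\cite[Cor.~5.12, Thm.~7.9]{PSer}) do not identify finite-length modules with tensor modules; they identify objects of $\mathbb{T}_{\Glm_\infty}$ with tensor modules, and $\mathbb{T}_{\Glm_\infty}$ is \emph{defined} as the category of modules that have finite length, are $\Sl_\infty$-integrable, \emph{and} satisfy the large annihilator condition. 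So the annihilator condition (and integrability, which your hypotheses also lack) is an input to the identification you invoke, not an output of it. Put differently: combined with Proposition \ref{actsdensely}, your density claim would assert that every $\Sl_\infty$-integrable $\Glm_\infty$-module of finite length lies in $\mathbb{T}_{\Glm_\infty}$ --- a strictly stronger, nowhere-established statement. Note also that the paper needs the present lemma precisely in Corollary \ref{undef}, at a point where membership of $V^h$ in $\mathbb{T}_{\Glm_\infty}$ is \emph{not yet known} and has to be proved by a separate density argument special to twists; a proof of the lemma that presupposes such membership could not play that role.

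For comparison: the paper gives no proof of this lemma (it cites \cite[Lemma~8.2]{PSer}), and a correct argument, essentially that of \cite{PSer}, avoids density altogether. Since $\Sl_\infty$ is an ideal of $\Glm_\infty$, the space $H:=\Hom_{\Sl_\infty}(L,L')$ carries a $\Glm_\infty$-module structure via $\varphi\cdot f=\varphi\circ f-f\circ\varphi$, on which $\Sl_\infty$ acts trivially. Because $L$ and $L'$ have finite $\Sl_\infty$-length and $U(\Sl_\infty)$ is countable dimensional, Dixmier's version of Schur's lemma together with induction on the lengths gives $\dim H<\infty$. The kernel of the action map $\Glm_\infty\to\End(H)$ is then an ideal of finite codimension; it must contain $\Sl_\infty$ (otherwise the infinite-dimensional simple Lie algebra $\Sl_\infty$ would embed into a finite-dimensional Lie algebra), and by the classification of ideals of $\Glm_\infty$ in \cite{PSer} every proper ideal containing $\Sl_\infty$ is contained in $\Gl_\infty\oplus\mathbb{C}\Id$, hence has infinite codimension. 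So the kernel is all of $\Glm_\infty$, i.e. $\varphi\circ f=f\circ\varphi$ for every $\varphi\in\Glm_\infty$ and $f\in H$, which is exactly the lemma; the final clause about isomorphisms follows as you say. The moral is that the lemma rests on the ideal structure of $\Glm_\infty$ and finite-dimensionality of the $\Hom$ space, not on the tensor-module machinery, and that is what makes it applicable to modules not yet known to be tensor modules.
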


\begin{myprop}\cite[Proposition~8.1]{PSer}\label{actsdensely}
Let $M$ be a finite length $\Glm_\infty$-module that is $\Sl_\infty$-integrable. Then $M$ is an object of $\mathbb{T}_{\Glm_\infty}$ iff 
$\Sl_\infty$ acts densely on it.
\end{myprop}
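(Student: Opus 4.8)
The plan is to prove the two implications separately. Since $M$ is assumed finite length and $\Sl_\infty$-integrable, membership in $\mathbb{T}_{\Glm_\infty}$ is equivalent to the single remaining requirement that $M$ satisfy the large annihilator condition over $\Glm_\infty$. So the task reduces to showing: large annihilator $\iff$ dense.

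For the implication $\Longrightarrow$ (large annihilator implies dense) I would work in the matrix model $\Glm_\infty\cong\Mat^M_\mathbb{N}$, $\Gl_\infty\cong\Mat_\mathbb{N}$, $\Sl_\infty\cong\Mat^0_\mathbb{N}$, using the standard dual bases so that for $U_f=\mathrm{span}(e_1,\dots,e_d)$ and $W_f=\mathrm{span}(e^1,\dots,e^d)$ the corner subalgebra $\Glm_{W_f^\perp,U_f^\perp}$ is realized as the matrices supported on indices $>d$. Given finitely many vectors $r_1,\dots,r_n$, the large annihilator condition together with the directedness of finite-dimensional subsystems yields a single $d$ with $\Glm_{W_f^\perp,U_f^\perp}\subset\bigcap_k\Ann(r_k)$. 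For an arbitrary $\varphi=(A_{ij})\in\Mat^M_\mathbb{N}$ I would split off its ``cross'' part $\psi_0$, whose entries are the $A_{ij}$ with $i\le d$ or $j\le d$; row- and column-finiteness of $\varphi$ force $\psi_0$ to have only finitely many nonzero entries, so $\psi_0\in\Gl_\infty$, while $\varphi-\psi_0$ is supported on indices $>d$ and hence lies in $\bigcap_k\Ann(r_k)$, giving $\psi_0\cdot r_k=\varphi\cdot r_k$. Finally I would correct the trace by subtracting $(\Tr\psi_0)E_{d+1,d+1}$, which lies in the same corner and so still annihilates every $r_k$; the result $\psi$ is finitary and traceless, i.e. $\psi\in\Sl_\infty$, and acts as $\varphi$ on all the $r_k$. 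This direction is routine.

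For the implication $\Longleftarrow$ (dense implies large annihilator) I would argue in three steps. First, a key elementary observation: if $\Sl_\infty$ acts densely then every $\Sl_\infty$-submodule $N\subset M$ is automatically $\Glm_\infty$-stable, since for $\varphi\in\Glm_\infty$ and $n\in N$ density supplies $\psi\in\Sl_\infty$ with $\varphi\cdot n=\psi\cdot n\in N$. Hence the lattices of $\Sl_\infty$- and of $\Glm_\infty$-submodules of $M$ coincide, so $M$ has finite length as an $\Sl_\infty$-module and is $\Sl_\infty$-integrable. Second --- and this is the crux --- I would show that $M|_{\Sl_\infty}$ belongs to $\mathbb{T}_{\Sl_\infty}$, i.e. that density upgrades finite length and integrability to the large annihilator condition over $\Sl_\infty$. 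Third, granting this, essential surjectivity of the restriction functor $\mathrm{Res}\colon\mathbb{T}_{\Glm_\infty}\rightsquigarrow\mathbb{T}_{\Sl_\infty}$ produces some $M'\in\mathbb{T}_{\Glm_\infty}$ with $M'|_{\Sl_\infty}\cong M|_{\Sl_\infty}$; both $M$ and $M'$ are $\Glm_\infty$-modules of finite $\Sl_\infty$-length and are isomorphic as $\Sl_\infty$-modules, so Lemma \ref{slglm} upgrades this to an isomorphism of $\Glm_\infty$-modules, placing $M\cong M'$ in $\mathbb{T}_{\Glm_\infty}$.

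The main obstacle is the second step of the converse: extracting the large annihilator condition over $\Sl_\infty$ from density. Since finite length and integrability do not by themselves guarantee the large annihilator condition, density must do genuine work here. My approach would be to use integrability to reduce to the case of a weight vector $m$ for the finitary diagonal Cartan subalgebra, and then to convert density into the statement that a single cofinite corner subalgebra $\Sl_{W_f^\perp,U_f^\perp}$ annihilates $m$: the finite length hypothesis bounds the weights occurring in the submodule generated by $m$, while density prevents the weight of $m$ from being supported on infinitely many coordinates, which is exactly the assertion that some tail corner kills $m$. Making this last conversion uniform over all $m$ and fully rigorous is the technical heart of the argument, and the place where I expect the real difficulty to lie.
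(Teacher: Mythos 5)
Your skeleton is sound in three of its four pieces, and those three are handled correctly. The reduction of membership in $\mathbb{T}_{\Glm_\infty}$ to the large annihilator condition is right; your proof that the large annihilator condition implies density is complete (the ``cross'' decomposition of a row- and column-finite matrix, plus the trace correction $\psi_0-(\Tr\psi_0)E_{d+1,d+1}$, works exactly as stated); and in the converse, both the observation that density makes every $\Sl_\infty$-submodule $\Glm_\infty$-stable, and the endgame via essential surjectivity of the restriction functor $\mathbb{T}_{\Glm_\infty}\rightsquigarrow\mathbb{T}_{\Sl_\infty}$ together with Lemma~\ref{slglm}, are valid. Be aware, though, that the paper itself contains no proof of Proposition~\ref{actsdensely}: it is quoted from \cite[Proposition~8.1]{PSer}. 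So the entire mathematical content of the statement is exactly the step you label as the crux, and that step is what a proof must supply.

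That step --- density implies the large annihilator condition over $\Sl_\infty$ --- is missing, and the sketch you give for it does not go through. First, integrability does not permit a reduction to weight vectors: a finite length, $\Sl_\infty$-integrable module need not be a weight module for the diagonal finitary Cartan subalgebra. Concretely, let $y=\sum_i e^i\in V^*$ and $M_0=V_*\oplus\mathbb{C}y\subset V^*$. Then $M_0$ is an $\Sl_\infty$-submodule of length two (with $M_0/V_*$ trivial), it is integrable, yet its weight vectors span only $V_*$: the weight vectors of $V^*$ are precisely the scalar multiples of the $e^i$. Moreover $y$ violates the large annihilator condition --- for every $d$ the corner element $e_{d+1}\otimes e^{d+2}$ sends $y$ to $-e^{d+2}\neq 0$ --- so this example shows both that finite length plus integrability genuinely do not give the large annihilator condition (density must carry all the weight), and that the offending vector can be invisible to any argument phrased in terms of weight vectors. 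Second, even for an honest weight vector $m$, ``the weight of $m$ is finitely supported'' only asserts that the \emph{diagonal} part of a tail corner kills $m$; the corner $\Sl_{W_f^\perp,U_f^\perp}$ contains all root vectors $E_{ij}$ with $i,j>d$, and their vanishing on $m$ is a strictly stronger statement --- it is the thing to be proved, not a restatement of finite support. Finally, density enters your sketch only through the unsupported sentence that it ``prevents the weight of $m$ from being supported on infinitely many coordinates''; no mechanism is proposed for how a pointwise approximation property of the action forces anything about annihilators. As it stands, you have proved that density is necessary and correctly reduced sufficiency to the key claim, but the key claim --- which is precisely the content of \cite[Proposition~8.1]{PSer} --- remains unproven.
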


\begin{mycol}\label{undef}
Let $h\in\Aut(\Glm_\infty)$. Then there exists an isomorphism of $\Glm_\infty$-modules $V^h\cong V$ or $V^h\cong V_*$.
\end{mycol}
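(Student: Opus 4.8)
The plan is to reduce the statement about the $\Glm_\infty$-module $V^h$ to the corresponding statement about $\Sl_\infty$-modules, where the answer is already recorded in Corollary \ref{slundef}, and then to transport the resulting isomorphism back up to $\Glm_\infty$ using Lemma \ref{slglm}. The starting observation is that $\Glm_\infty$ has a unique simple ideal, namely $\Sl_\infty$. Since any automorphism must send simple ideals to simple ideals, $h$ necessarily stabilizes $\Sl_\infty$ and therefore restricts to an automorphism $h_{|\Sl_\infty}\in\Aut(\Sl_\infty)$.

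Next I would track how twisting interacts with restriction. Writing $\mathrm{Res}$ for restriction from $\Glm_\infty$ to $\Sl_\infty$, I would check directly from the definition of the twisted action that $\mathrm{Res}(V^h) = (\mathrm{Res}\,V)^{h_{|\Sl_\infty}}$: for $\psi\in\Sl_\infty$ and $v\in V$ both sides act by $\psi\cdot v = h(\psi)\cdot v$, so they are literally the same $\Sl_\infty$-module, not merely isomorphic. With this identification in hand I would invoke Theorem \ref{autsl}, which presents $\Aut(\Sl_\infty)$ as the disjoint union of the coset $\tilde{G}_0$ and the coset $\tilde{G}_0\tau = \tau\tilde{G}_0$; accordingly $h_{|\Sl_\infty}$ lies in exactly one of these two cosets, and the argument splits into two cases.

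In the first case $h_{|\Sl_\infty}\in\tilde{G}_0$, and Corollary \ref{slundef} gives $V^{h_{|\Sl_\infty}}\cong V$ as $\Sl_\infty$-modules; in the second case $h_{|\Sl_\infty} = \tau\circ h_0$ for some $h_0\in\tilde{G}_0$, and the same corollary gives $V^{h_{|\Sl_\infty}} = V^{\tau\circ h_0}\cong V_*$ as $\Sl_\infty$-modules. Combining with the identification of the previous paragraph, $\mathrm{Res}(V^h)$ is isomorphic as an $\Sl_\infty$-module to either $V$ or $V_*$.

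The final step is to upgrade this $\Sl_\infty$-isomorphism to an isomorphism of $\Glm_\infty$-modules, and here I would apply Lemma \ref{slglm}, whose hypothesis requires the modules to be $\Glm_\infty$-modules of finite length over $\Sl_\infty$. Both $V$ and $V_*$ are simple over $\Sl_\infty$ by Proposition \ref{socle}, hence of length one, and $V^h$ is the image of the simple module $V$ under the category automorphism $\mathcal{F}^h$, hence also simple over $\Sl_\infty$ by the identification above. Thus all modules in play satisfy the hypotheses of Lemma \ref{slglm}, and the $\Sl_\infty$-isomorphism $\mathrm{Res}(V^h)\cong V$ (resp. $V_*$) promotes to a $\Glm_\infty$-isomorphism $V^h\cong V$ (resp. $V^h\cong V_*$), as desired. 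I expect the only genuinely delicate point to be the bookkeeping that restriction commutes with twisting and that $h$ stabilizes $\Sl_\infty$; once those are in place, the classification of $\Aut(\Sl_\infty)$ together with the rigidity furnished by Lemma \ref{slglm} closes the argument with no further computation.
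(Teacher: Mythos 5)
Your proof is correct, and its skeleton --- restrict $h$ to $\Sl_\infty$ via uniqueness of the simple ideal, invoke Theorem \ref{autsl} and Corollary \ref{slundef} to identify $\mathrm{Res}(V^h)$ with $V$ or $V_*$ over $\Sl_\infty$, then promote the isomorphism to $\Glm_\infty$ via Lemma \ref{slglm} --- is the same as the paper's. The one genuine divergence is how the hypothesis of Lemma \ref{slglm} gets verified. The paper shows that $\Sl_\infty$ acts densely on $V^h$ (given $\varphi\in\Glm_\infty$ and finitely many vectors, it approximates $h(\varphi)$ by some $\eta\in\Sl_\infty$ and takes $\psi = h^{-1}(\eta)$) and then invokes Proposition \ref{actsdensely} to conclude $V^h\in\mathbb{T}_{\Glm_\infty}$, which is what underwrites the finite-length requirement. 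You instead observe that $\mathrm{Res}(V^h) = (\mathrm{Res}\,V)^{h_{|\Sl_\infty}}$ on the nose, hence is simple over $\Sl_\infty$, being the image of the simple module $V$ under the category automorphism $\mathcal{F}^{h_{|\Sl_\infty}}$ of Proposition \ref{functor}; so all modules in play have length one over $\Sl_\infty$ and Lemma \ref{slglm} applies as stated. Your route is slightly more economical, bypassing Proposition \ref{actsdensely} and the category $\mathbb{T}_{\Glm_\infty}$ entirely; what the paper's detour buys is the stronger and independently useful fact that $V^h$ is actually an object of $\mathbb{T}_{\Glm_\infty}$, but for the corollary as stated your simplicity argument suffices.
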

\begin{proof} 
If $h\in\Aut(\Glm_\infty)$, then $h$ restricts to an automorphism of $\Sl_\infty$. We show that $\Sl_\infty$ acts densely on $V^h$. For any $\varphi\in \Glm_\infty$ and $r_1,\cdots, r_n\in V$, we can find an element $\eta\in\Sl_\infty$ such that $\eta(r_k) = h(\varphi)(r_k)$ for $k = 1,\cdots,n$. Therefore $\psi: = h^{-1}(\eta)$ is an element of $\Sl_\infty$ such that $\psi \cdot r_k = \varphi \cdot r_k$ for $k = 1,\cdots, n$. By Proposition \ref{actsdensely}, we conclude that $V^h\in \mathbb{T}_{\Glm_\infty}$.  We know from Corollary \ref{slundef} that either $V^h\cong V$ or $V^h\cong V_*$ as $\Sl_\infty$-modules, therefore we conclude that either $V^h\cong V$ or $V^h\cong V_*$ as $\Glm_\infty$-modules by Lemma \ref{slglm}.
\end{proof}

\begin{mythm}\label{mainglm} Let $h\in \Aut(\Glm_\infty)$.
\begin{enumerate}
\item If there is an isomorphism  $V^h\cong V$ of $\Glm_\infty$-modules, then $h \in \tilde{G}_0$.
\item If there is an isomorphism $V^h\cong V_*$ of $\Glm_\infty$-modules, then $\tau\circ h \in \tilde{G}_0$.
\end{enumerate}
\end{mythm}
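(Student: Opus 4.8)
The plan is to prove part~1 directly, realizing $h$ as conjugation by an explicit element of $\tilde{G}$, and then to deduce part~2 from part~1 by twisting with $\tau$.

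For part~1, suppose $\Phi\colon V^h\to V$ is an isomorphism of $\Glm_\infty$-modules and regard $\Phi$ as a linear automorphism of the common underlying space $V$. The intertwining identity $\Phi(h(\varphi)\cdot u) = \varphi\cdot\Phi(u)$ becomes $\Phi\,h(\varphi) = \varphi\,\Phi$ in $\End(V)$, so that $h(\varphi) = \Phi^{-1}\varphi\,\Phi$ for every $\varphi\in\Glm_\infty$. Comparing with the conjugation convention $\rho(g)\colon\varphi\mapsto g^{-1}\varphi g$ recorded in Corollary~\ref{slundef}, this says $h = \rho(\Phi)$, so it remains only to check that $\Phi\in\tilde{G}$, i.e.\ that $\Phi^*(V_*) = V_*$. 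This is the heart of the matter.

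To verify $\Phi^*(V_*) = V_*$ I would dualize. Applying the contravariant functor $(\,\cdot\,)^*$ to the module isomorphism $\Phi$ produces a module isomorphism $\Phi^*\colon V^*\to (V^h)^*$, and by Proposition~\ref{functor}(4) the target equals $(V^*)^h$. A module isomorphism carries the socle onto the socle; since $\soc V^* = V_*$ by Proposition~\ref{socle}(2), and since $\soc\big((V^*)^h\big) = (\soc V^*)^h = V_*$ as a subset of $V^*$ by Proposition~\ref{functor}(5), the map $\Phi^*$ sends $V_*$ bijectively onto $V_*$. This is precisely the defining condition for membership in $\tilde{G}$, whence $h = \rho(\Phi)\in\im\rho = \tilde{G}_0$.

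For part~2 I would reduce to part~1 by replacing $h$ with $\tau\circ h$, again an automorphism of $\Glm_\infty$. Proposition~\ref{functor}(1) gives $V^{\tau\circ h} = (V^h)^\tau$, and combining the hypothesis $V^h\cong V_*$ with Proposition~\ref{functor}(2) yields $(V^h)^\tau\cong (V_*)^\tau$. Finally, the self-dual construction preceding the theorem provides a $\Glm_\infty$-module isomorphism $\epsilon\colon V^\tau\to V_*$; twisting it once more by the involution $\tau$ gives $(V_*)^\tau\cong (V^\tau)^\tau = V$. Hence $V^{\tau\circ h}\cong V$, and part~1 applied to $\tau\circ h$ yields $\tau\circ h\in\tilde{G}_0$. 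The only genuine obstacle is the socle computation in part~1: one must be careful that the vector-space dual of the module map $\Phi$ is a module map for the appropriately twisted structures, and that $\mathcal{F}^h$ truly commutes with both $(\,\cdot\,)^*$ and $\soc$, so that the identification $V_* = \soc V^*$ transports under $\Phi^*$ to pin down $\Phi^*(V_*) = V_*$. Once this is secured, the rest is routine bookkeeping with the conjugation convention defining $\rho$.
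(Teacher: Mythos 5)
Your proposal is correct and follows essentially the same route as the paper: part~1 realizes $h$ as conjugation by the intertwiner and pins down membership in $\tilde{G}$ via the socle identification $\soc V^* = V_*$, $\soc\bigl((V^h)^*\bigr) = V_*^h$ (which the paper packages as Proposition~\ref{socle}(3)), and part~2 reduces to part~1 through $V^{\tau\circ h} = (V^h)^\tau \cong (V_*)^\tau \cong (V^\tau)^\tau = V$. The only cosmetic difference is that you unwind Proposition~\ref{functor}(4)--(5) explicitly where the paper cites Proposition~\ref{socle} directly.
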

\begin{proof}\
\begin{enumerate}
\item If $f\colon V^h\to V$ is an $\Glm_\infty$-isomorphism, then $fh(\varphi) = \varphi f$ for every $\varphi\in\Glm_\infty$. Since $f$ is an isomorphism, we conclude that $h(\varphi) = f^{-1}\varphi f$. What remains to be shown is that $f\in \tilde{G}$. 

Observe that the dual operator $f^*\colon V^*\to (V^h)^*$ is a $\Glm_\infty$-isomorphism. Since $\soc V^* = V_*$ and $\soc (V^h)^*= V_*^h$ by Proposition \ref{socle}, we conclude that $V_*$ is $f^*$-stable. Therefore, $f\in \tilde{G}$ and $h\in \tilde{G}_0$.
\item If $V^h \cong V_*$, then we have the following isomorphisms of $\Glm_\infty$-modules
\[
V^{\tau\circ h}  = (V^h)^\tau\cong V_*^\tau\cong (V^\tau)^\tau = V^{\tau^2} =V.\]
 By statement 1 we conclude that $\tau\circ h\in\tilde{G}_0$.\qedhere
 \end{enumerate}
\end{proof}

\begin{mycol}\label{aut}\
\begin{enumerate}
\item $\Aut(\Glm_\infty) = \tilde{G}_0\rtimes \{\Id, \tau\}$.
\item Every automorphism of $\Sl_\infty$ extends uniquely to an automorphism of $\Glm_\infty$.
\end{enumerate}
\end{mycol}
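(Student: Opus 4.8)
The plan is to deduce both statements by assembling the structure theorem for $\Aut(\Sl_\infty)$ (Theorem \ref{autsl}) with the three results just proved for $\Glm_\infty$ (Theorem \ref{mainglm}, Corollary \ref{undef}). The one point to keep straight is that $\tilde G_0$ and $\tau$ are defined uniformly for $\mathfrak g = \Sl_\infty$ and $\mathfrak g = \Glm_\infty$: the same $g\in\tilde G$ induces the conjugation automorphism on both algebras, and the single involution $\tau\colon\varphi\mapsto-\epsilon^{-1}\varphi^*\epsilon$ acts on both. This compatibility is what lets the two decompositions talk to each other.

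For part 1, I would feed Corollary \ref{undef} into Theorem \ref{mainglm}. Corollary \ref{undef} gives, for every $h\in\Aut(\Glm_\infty)$, a $\Glm_\infty$-module isomorphism $V^h\cong V$ or $V^h\cong V_*$; Theorem \ref{mainglm} then places $h\in\tilde G_0$ in the first case and $\tau\circ h\in\tilde G_0$ in the second. Since $\tau^2=\Id$, the second reads $h\in\tau\tilde G_0$, so $\Aut(\Glm_\infty)=\tilde G_0\cup\tau\tilde G_0$. To see this union is disjoint, i.e.\ $\tau\notin\tilde G_0$, I would invoke the non-isomorphism of $V$ and $V_*$: every element of $\tilde G_0$ has the form $\rho(g)$ with $g\in\tilde G$, and the computation in the proof of Corollary \ref{slundef} works verbatim for $\Glm_\infty$, so $g$ itself gives an isomorphism $V^{\rho(g)}\cong V$ of $\Glm_\infty$-modules; thus no element of $\tilde G_0$ sends the class of $V$ to that of $V_*$, whereas $V^\tau\cong V_*\not\cong V$. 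Hence $\tilde G_0$ has index $2$, is therefore normal, and $\{\Id,\tau\}$ is a complement meeting it trivially, yielding $\Aut(\Glm_\infty)=\tilde G_0\rtimes\{\Id,\tau\}$.

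For part 2, I would study the restriction homomorphism $\mathrm{Res}\colon\Aut(\Glm_\infty)\to\Aut(\Sl_\infty)$, which is well defined because $\Sl_\infty$ is the unique simple (hence characteristic) ideal of $\Glm_\infty$. Existence of the extension is surjectivity of $\mathrm{Res}$, uniqueness is injectivity. Surjectivity is immediate from the matching decompositions: $\mathrm{Res}$ carries the conjugation automorphism by $g\in\tilde G$ on $\Glm_\infty$ to the conjugation automorphism by the same $g$ on $\Sl_\infty$ (note $\Sl_\infty$ is stable under such conjugation, being the finitary traceless part), and carries $\tau$ to $\tau$; by Theorem \ref{autsl} these generate $\Aut(\Sl_\infty)=\tilde G_0\rtimes\{\Id,\tau\}$, so $\mathrm{Res}$ is onto.

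The hard part will be injectivity, that an $h\in\Aut(\Glm_\infty)$ fixing $\Sl_\infty$ pointwise is the identity. Here I would argue directly: for $\varphi\in\Sl_\infty$ and $\psi\in\Glm_\infty$ the ideal property gives $[\varphi,\psi]\in\Sl_\infty$, so $h([\varphi,\psi])=[\varphi,\psi]$ forces $[\varphi,\psi-h(\psi)]=0$, meaning $\psi-h(\psi)$ centralizes $\Sl_\infty$. A short matrix computation identifies this centralizer as exactly $\mathbb{C}\Id$, whence $h(\psi)=\psi+c(\psi)\Id$ for a linear functional $c$ that, on comparing $h([\psi_1,\psi_2])$ with $[h\psi_1,h\psi_2]$, vanishes on $[\Glm_\infty,\Glm_\infty]$. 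The crux is then the perfectness of $\Glm_\infty$: realizing the canonical commutation relation by $X e_n=(n-1)e_{n-1}$ and $Y e_n=e_{n+1}$ gives $[X,Y]=\Id$, and conjugating matrix units by the injective diagonal $\mathrm{diag}(1,2,3,\dots)$ (so that $[\,\mathrm{diag}(1,2,\dots),E_{ij}]=(i-j)E_{ij}$) together with a weighted-shift construction for diagonal entries exhibits every element of $\Mat^M_{\mathbb N}$ as a commutator. Thus $[\Glm_\infty,\Glm_\infty]=\Glm_\infty$, forcing $c\equiv 0$ and $h=\Id$. (Alternatively, injectivity drops out structurally from part 1 plus Theorem \ref{autsl}, since $\ker\rho=\mathbb{C}^*$ and $\tau\notin\tilde G_0$ on the $\Sl_\infty$ side; I would include this as a cross-check.)
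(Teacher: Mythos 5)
Your proposal is correct, and it is worth separating the two parts. For part 1 you argue exactly as the paper does, combining Corollary \ref{undef} with Theorem \ref{mainglm}; your extra step of checking disjointness (that $\tau\notin\tilde G_0$, because every $\rho(g)\in\tilde G_0$ satisfies $V^{\rho(g)}\cong V$ while $V^\tau\cong V_*\not\cong V$ as $\Glm_\infty$-modules) is a detail the paper leaves implicit, and it is genuinely needed to upgrade the union $\tilde G_0\cup\tau\tilde G_0$ to the semidirect product. For part 2 you take a different route. The paper's one-line proof is purely structural: the restriction map $\Aut(\Glm_\infty)\to\Aut(\Sl_\infty)$ carries the decomposition of part 1 term by term onto the decomposition of Theorem \ref{autsl}, and injectivity comes from the fact that $\rho\colon\tilde G\to\Aut(\mathfrak g)$ has the same kernel $\mathbb{C}^*$ for $\mathfrak g=\Sl_\infty$ and $\mathfrak g=\Glm_\infty$ --- this is precisely your closing ``cross-check,'' which by itself would reproduce the paper's argument. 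Your primary argument for uniqueness is instead direct and self-contained: if $h\in\Aut(\Glm_\infty)$ fixes $\Sl_\infty$ pointwise, then $\psi-h(\psi)$ centralizes $\Sl_\infty$ for every $\psi$, the centralizer of $\Sl_\infty$ in $\Glm_\infty$ is $\mathbb{C}\Id$ (your matrix-unit computation is right), so $h(\psi)=\psi+c(\psi)\Id$ with $c$ vanishing on commutators, and perfectness of $\Glm_\infty\cong\Mat^M_{\mathbb{N}}$ forces $c\equiv 0$. Your perfectness argument checks out: matrices with zero diagonal are commutators with $\mathrm{diag}(1,2,3,\dots)$, and an arbitrary diagonal $\mathrm{diag}(a_1,a_2,\dots)$ equals $[T,S]$ where $S e_n=e_{n+1}$ and $T e_{n+1}=(a_1+\dots+a_n)e_n$, $Te_1=0$, both of which lie in $\Mat^M_{\mathbb{N}}$. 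What this buys is that injectivity of the restriction map is established with no appeal to Theorem \ref{autsl} or to part 1, so uniqueness of the extension is seen to be independent of the classification of automorphisms; the price is the auxiliary claim $[\Glm_\infty,\Glm_\infty]=\Glm_\infty$, which is true and correctly proved by you, but which the paper never needs.
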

\begin{proof}
\
\begin{enumerate}
\item Follows directly from Corollary \ref{undef} and Theorem \ref{mainglm}. 
\item Follows directly from statement 1 and Theorem \ref{autsl}.\qedhere\end{enumerate}
\end{proof}

\bigskip
In conclusion, we computed the automorphism group of $\Glm_\infty$ using knowledge about the categories $\mathbb{T}_{\Sl_\infty}$ and $\mathbb{T}_{\Glm_\infty}$. As a problem for the future, it would be interesting to determine the automorphism groups of $\Sl_{U,W}$, $\Gl_{U,W}$ and $\Glm_{U,W}$ for an arbitrary linear system $(U,W)$.

\newpage
\bibliographystyle{alpha}

\end{document}